\definecolor{job}{RGB}{200,65,0}
\definecolor{sz}{RGB}{0,0,200}
\tikzset{help lines/.style={step=#1cm,very thin, color=gray},
help lines/.default=.5} 
\tikzset{thick grid/.style={step=#1cm,thick, color=gray},
thick grid/.default=1} 
\tikzstyle{ann}=[fill=white, inner sep=1pt, font=\footnotesize{#1}]
\tikzstyle{annfar}=[inner sep=2pt, font=\footnotesize{#1}]
\tikzstyle{annfarer}=[inner sep=3pt, font=\footnotesize{#1}]
\tikzstyle{annrot}=[fill=white, text=blue!75!black, inner sep=1pt, font=\footnotesize{#1}]
\tikzstyle{wall}=[thick]
\tikzstyle{nullwall}=[thick, dotted]
\newtheorem{thm}{Theorem}[section]
\newtheorem{thm*}{Theorem}
\newtheorem{lem}[thm]{Lemma}
\newtheorem{cor}[thm]{Corollary}
\newtheorem{prop}[thm]{Proposition}
\theoremstyle{definition}
\newtheorem{defn}[thm]{Definition}
\newtheorem{exm}[thm]{Example}
\newtheorem{notation}[thm]{Notation}
\newtheorem{construction}[thm]{Construction}
\newtheorem{rmk}[thm]{Remark}
\numberwithin{equation}{section}
 \newcounter{tmp}
\DeclareMathOperator{\Coker}{Coker}
\DeclareMathOperator{\Hom}{Hom}%
\DeclareMathOperator{\Ext}{Ext}%
\DeclareMathOperator{\End}{End}%
\DeclareMathOperator{\add}{add} 
\DeclareMathOperator{\pd}{pd}%
\DeclareMathOperator{\id}{id}%
\DeclareMathOperator{\modd}{\textendash mod}%
\DeclareMathOperator{\Modd}{\textendash Mod}%
\DeclareMathOperator{\Homeo}{Homeo}%
\newcommand{\field}[1]{\mathbb{#1}}
\newcommand{\ZZ}{\ensuremath{{\field{Z}}}}
\newcommand{\RR}{\ensuremath{{\field{R}}}}
\newcommand{\NN}{\ensuremath{{\field{N}}}}
\newcommand{\bS}{\ensuremath{{\field{S}}}}
\newcommand{\e}{\varepsilon}
\newcommand{\commentout}[1]{}
 \newcommand{\bR}{{\RR}}
\newcommand{\cA}{\ensuremath{{\mathcal{A}}}}
\newcommand{\cC}{\ensuremath{{\mathcal{C}}}}
\newcommand{\cM}{\ensuremath{{\mathcal{M}}}}
\newcommand{\cS}{\ensuremath{{\mathcal{S}}}}
\newcommand{\Top}{\operatorname{top}}
\newcommand{\Ker}{\operatorname{Ker}}
\newcommand{\Ima}{\operatorname{Im}}
\newcommand{\bk}{\mathbf{k}}
\newcommand{\pwf}{{\text{\textendash}\operatorname{pwf}}}
\newcommand{\bpwf}{{\text{\textendash}\operatorname{bpwf}}}
\newcommand{\spwf}{{\text{\textendash}\operatorname{spwf}}}
 \title{Continuous Nakayama Representations}
  \author{Job D.~Rock}
 \address{Ghent University, 9000 Ghent, East Flanders, Belgium}
 \email{job.rock@ugent.be}
 \author{Shijie Zhu}
\address{School of Science, Nantong University, Nantong, Jiangsu, P.R.China, 226019}
\email{shijiezhu@ntu.edu.cn}
\keywords{continuous representations, persistence modules, Nakayama algebras}
\subjclass[2020]{
16G10; 16G20; 26A48; 37E05; 37E10
}
\begin{document}
\begin{abstract}
    We introduce continuous analogues of Nakayama algebras.
    In particular, we introduce the notion of (pre-)Kupisch functions, which play a role as Kupisch series of Nakayama algebras, and view continuous Nakayama representations as a special type of representation of $\RR$ or $\bS^1$.
    We investigate equivalences and connectedness of the categories of Nakayama representations.
    Specifically, we prove that orientation-preserving homeomorphisms on $\RR$ and on $\bS^1$ induce equivalences between these categories. Connectedness is characterized by a special type of points called separation points determined by (pre-)Kupisch functions.
    We also construct an exact embedding  from the category of finite-dimensional representations for any  finite-dimensional Nakayama algebra, to a category of continuous Nakayama representaitons.
\end{abstract}

 \maketitle
 
\setcounter{tocdepth}{1}
\tableofcontents

\section{Introduction}
\subsection{Background}
A finite-dimensional algebra $A$ is Nakayama if it is both left and right serial. That is, left and right indecomposable  projective $A$-modules have unique composition series.
A basic Nakayama algebra is isomorphic to a quiver algebra $\bk Q/I$ where $Q$ is either an $\mathbb A_n$ type quiver with straight orientation, or an $\widetilde{\mathbb{A}}_n$ type quiver with cyclic orientation. 
Any basic Nakayama algebra can be determined by its Kupisch series $(l_1,l_2,\cdots, l_n)$, where $l_i$ encodes the length of the $i$-th indecomposable projective module.
In representation theory, Nakayama algebras have finite representation type and are considered as one of the most well-known classes of algebras.
Many homological properties of Nakayama algebras have been revealed. For instance, Gustafson \cite{Gu} showed that the global dimension of a basic Nakayama algebra with $n$ non-isomorphic simple modules are bounded by $2n-2$.
In \cite{Rin1}, Ringel characterized Gorenstein projective modules over Nakayama algebras. In \cite{Rin2} and \cite{S1}, the authors studied the finitistic dimension of Nakayama algebras.
In \cite{S2}, Sen characterizes Nakayama algebras which are higher Auslander. In \cite{MS}, the authors classifies quasi-hereditary Nakayama algebras.

As the recent development of persistence theory in topological data analysis, persistence modules has been extensively studied in both representation theory and data science.
The pointwise finite-dimensional $\RR$-representations, which appear as one-parameter persistence homology, are well understood classes of persistence modules.
Any pointwise finite-dimensional $\RR$-representation has a unique barcode decomposition as a direct sum of interval modules $M_U$ \cite{C,GR}.
Recently, Hanson and Rock \cite{HR} considered pointwise finite-dimensional $\bS^1$-representations and showed that any such representation can be uniquely decomposed as a direct sum of (possibly infinitely many) string modules $\overline{M}_U$ and finitely many Jordan cells.

\subsection{Continuous Nakayama Representations}
In this paper, we consider pointwise finite-dimensional (pwf) representations over $\RR$ or over $\bS^1$ subject to some relations given by a  (pre-)Kupisch function. 
These $\RR$- and $\bS^1$-representations are continuous analogues of representations of Nakayama algebras, which we call continuous Nakayama representations.

Consider $\RR$ as a category where the objects are points in $\RR$ and there is a unique morphism $g_{xy}:x\to y$ if $x\leq y$.
An $\RR$-representation $M$ over a field $\bk$ is a covariant functor $M:\RR\to\bk$-$\mathrm {Vec}$. For any interval $U$, an interval module $M_U$ is an $\RR$-representation such that $$M_U(x)=\begin{cases}\bk &x\in U \\ 0 & x\not\in U \end{cases} \text{\ and\ } M_U(x\to y)=\id_{\bk} \text{\ for\ } x\leq y\in U.$$

We similarly consider $\bS^1$ as a category where the objects are elements of $\bS^1$ and morphisms move counter-clockwise.
A pwf $\bS^1$-representation $M$ over a field $\bk$ is a covariant functor $M:\bS^1\to\bk$-$\mathrm{Vec}$. 
A string module $\overline{M}_U$ is the ``push-down'' of the interval module $M_U$ via the covering map $p:\RR\to\bS^1$.

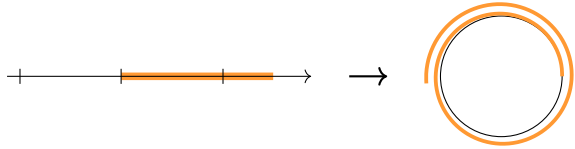
\begin{figure}[h]
    \centering
    \begin{tikzpicture}
    \draw[line width = 1mm, orange!80!white] (-5,0) -- (-3,0);
    \draw[->](-6.5,0)--(-2.5,0);
    \foreach \x in {-6.33,-5,-3.66}
        \draw (\x,0.1) -- (\x,-0.1);
    \draw[thick, ->] (-2,0) -- (-1.5,0);
    \draw (0.8,0) arc (0:360:0.8);
    \draw [domain=0:9.52, variable=\t, smooth, samples=75,line width = 0.5mm, orange!80!white] plot ({\t r}: {0.8+0.02*\t});
    \end{tikzpicture}
    \caption{Push-down of the interval $\RR$-module $M_{[0,1.5]}$ to the $\bS^1$-module $\overline M_{[0,1.5]}$.}\label{pushdown_fig}
\end{figure}

Continuous Nakayama representations are pwf representations on an interval $I\subseteq \RR$ or on $\bS^1$ that are compatible with some relations given by (pre-)Kupisch functions (Definitions~\ref{def:pre-Kupisch function}~and~\ref{defn:Kupisch}).
Similar to the Kupisch series of Nakayama algebras, a (pre-)Kupisch function $\kappa$ determines an indecomposable projective representation $M_{[t,t+\kappa(t)]}$ (or $\overline M_{[t,t+\kappa(t)]}$) at each point $t\in\RR$.
Thus, each indecomposable representation is $M_U$ (or $\overline M_U$) for some interval $U\subseteq [t,t+\kappa(t)]$.
We denote the full subcategory of pwf $\RR$-representations compatible with a pre-Kupisch function $\kappa$ on an interval $I$ by $(I,\kappa)\pwf$ and the full subcategory of pwf $\bS^1$-representations compatible with a Kupisch function $\kappa$ by $(\bS^1,\kappa)\pwf$.
We call them categories of {\bf continuous Nakayama representations}.

\subsection{Results}  
We investigate the connectedness and equivalences of categories $(I,\kappa)\pwf$ or $(\bS^1,\kappa)\pwf$ among various (pre-)Kupisch functions and their relation to discrete Nakayama representations. 

Recall that an orientation-preserving homeomorphism between intervals $I$ and $J$ is an increasing bijective map $f:I\to J$.
Given a pre-Kupisch function $\kappa$ on an interval $I$, an orientation preserving homeomorphism $f:I\to J$ induces a push-forward pre-Kupisch function $f_*\kappa$ on $J$ (Definition \ref{def:push-forward}).
In fact, this gives rise to an equivalence of categories $(I,\kappa)\pwf\to(J,f_*\kappa)\pwf$ that sends interval modules $M_U$ to $M_{f(U)}$ (Theorem \ref{interval equiv}).
Similarly, an orientation-preserving homeomorphism on $\bS^1$ is a degree $1$ circle homeomorphism $f:\bS^1\to\bS^1$ (\cite{H}).
For a Kupisch function $\kappa$, such an $f$ induces a push-forward Kupisch function $f_*\kappa$ (Definition~\ref{def:circle push-forward}). This also gives an equivalence $(\bS^1,\kappa)\pwf\to(\bS^1, f_*\kappa)\pwf$ (Theorem \ref{iso1}). 

The converse of Theorems~\ref{interval equiv}~and~\ref{iso1} may not hold in case the category $(I,\kappa)\pwf$ (or $(\bS^1,\kappa)\pwf$) is disconnected.
This is because one can construct an equivalence by permuting the orthogonal components of the representation category (Example \ref{n piece}).


In order to describe the connectedness of the categories of continuous Nakayama representations, we introduce the notion of separation points, using an analytical property of (pre-)Kupisch functions (Definition \ref{def:separation pt}). Denote by $\cS(\kappa)$ the set of all separation points of $\kappa$. We show the following result concerning the connectedness of categories $(I,\kappa)\pwf$ (or $(\bS^1,\kappa)\pwf$).

  \begingroup
\setcounter{tmp}{\value{thm}}
\setcounter{thm}{0} 
\renewcommand\thethm{\Alph{thm}}

\begin{thm} \label{main thm A}
(1) Let $\kappa$ be a pre-Kupisch function on an interval $I$.
Then $(I,\kappa)\pwf$ is a connected additive category if and only if $\cS(\kappa)=\emptyset$.

(2) Let $\kappa$ be a Kupisch function.
Then $(\bS^1,\kappa)\pwf$ is a connected additive category if and only if $|\cS(\kappa)\cap[0,1)|\leq 1$.
\end{thm}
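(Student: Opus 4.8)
The plan is to reduce connectedness to the combinatorics of the ``$\Hom$-graph'' on indecomposables and then read it off from the supports. Both categories are Krull--Schmidt: on the line every object is a direct sum of interval modules $M_U$ by the barcode theorem \cite{C,GR}, and on $\bS^1$ a direct sum of string modules $\overline M_U$ and finitely many Jordan cells by \cite{HR}. Hence the category is \emph{connected}, i.e.\ admits no nontrivial orthogonal decomposition, exactly when the graph whose vertices are the indecomposables and whose edges join objects admitting a nonzero morphism in either direction is connected. The first ingredient I would record is the elementary fact that $\Hom(M_U,M_V)\neq 0$ forces $U\cap V\neq\emptyset$, so disjointly supported families are automatically orthogonal. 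The second is a \textbf{bridging principle}: if two allowed intervals $U,V$ overlap in a subinterval of positive length, or if one ends with a closed endpoint $s$ exactly where the other begins with a closed endpoint, then $M_U$ and $M_V$ lie in the same component. The positive-overlap case is handled by passing through a common truncation $M_W$ with $W\subseteq U$ (allowed, since it sits inside a projective support) via $M_U\to M_W\leftarrow M_V$; the closed-touching case is the nonzero composite $M_{[\cdot,s]}\leftarrow M_{[s,\cdot]}$ through the simple $M_{\{s\}}$.

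For Part (1) I would use the reading of Definition~\ref{def:separation pt} that $s\in\cS(\kappa)$ precisely when the projective supports approach $s$ from the left without reaching it, so that every allowed interval is either contained in $\{x<s\}$ or in $\{x\ge s\}$, whereas $\cS(\kappa)=\emptyset$ means one can ``cross every interior point'' by an overlapping or closed-touching pair of supports. Granting this, the ``only if'' direction is immediate from the first ingredient: a separation point $s$ splits the allowed intervals into two families with disjoint supports (the left one lying strictly below $s$, the right one weakly above, the simple $M_{\{s\}}$ falling on the right), both nonempty because $s$ is interior and approached, giving a nontrivial orthogonal decomposition. For the ``if'' direction I argue topologically: when $\cS(\kappa)=\emptyset$ the supports link across every point, so between the defining projectives of any two indecomposables there is a finite chain of consecutively overlapping-or-touching supports; the bridging principle then places $M_U$ and $M_V$ in one component, and since $I$ is connected this exhausts the category.

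Part (2) runs along the same lines once the topology of the circle is used, and this is where the bound ``$\le 1$'' appears. If $\cS(\kappa)\cap[0,1)=\emptyset$, the supports link around all of the compact connected space $\bS^1$ and the chain argument yields a single component, to which every Jordan cell belongs since its support is all of $\bS^1$ and meets everything. If there is exactly one separation point $s$, then no support crosses $s$, so no Jordan cell can occur, and every indecomposable is a string module supported in the \emph{connected} arc $\bS^1\setminus\{s\}$; as there is no further cut there, the line argument on this arc gives connectedness, reflecting that a single puncture does not disconnect the circle. Conversely, two distinct separation points $s_1,s_2$ cut $\bS^1$ into two arcs, no Jordan cell survives, and the indecomposables split into two disjointly supported families, giving a nontrivial orthogonal decomposition; thus connectedness fails as soon as $|\cS(\kappa)\cap[0,1)|\ge 2$.

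The main obstacle I anticipate is not the graph combinatorics but the faithful translation of the analytic Definition~\ref{def:separation pt} into the combinatorial dichotomy used above, together with the attendant endpoint bookkeeping. The decisive subtlety is that a nonzero morphism can exist between interval modules whose supports meet in a single \emph{closed} point, through the simple $M_{\{s\}}$; consequently the disconnection at a separation point $s$ hinges on the strict approach condition, which guarantees that the left supports never reach $s$ and hence that the two families are genuinely disjoint rather than merely touching. Verifying that the separation-point definition delivers exactly this strictness --- and, on $\bS^1$, that Jordan cells occur if and only if there is no cut --- is the crux on which the clean counts $\cS(\kappa)=\emptyset$ and $|\cS(\kappa)\cap[0,1)|\le 1$ rest.
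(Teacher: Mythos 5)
Your overall strategy is the same as the paper's: at a separation point the allowed supports split into two disjoint families, and disjoint supports give $\Hom$-orthogonality, hence a nontrivial decomposition; in the absence of separation points one connects any two indecomposables through a finite chain of projective supports; and on the circle one cuts at a separation point and reduces to the line, which is why the threshold is $|\cS(\kappa)\cap[0,1)|\le 1$. The genuine gap is the step you label ``I argue topologically'': you pass from the pointwise statement ``every interior point is crossed or closed-touched by a support coming from its left'' (the negation of Definition~\ref{def:separation pt}) to the global statement that any two points are joined by a \emph{finite} chain of consecutively overlapping-or-touching supports. This implication is not automatic and is exactly the analytic heart of the theorem: the supports reaching past $a$ could shrink and their reach could accumulate at some $c<b$, which is precisely the separation-point scenario. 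The paper closes this gap with the iteration $K^n(a)$: Lemma~\ref{s prop} shows that if $\lim_{n}K^n(a)=c$ lies in $I$ then $c$ is a separation point, so (Proposition~\ref{s cor}) $\cS(\kappa)=\emptyset$ forces $K^n(a)\to\sup(I)$, whence $b\in[K^n(a),K^{n+1}(a)]$ for some finite $n$ and one writes down the explicit chain $M_{[b,K(b)]}\to M_{[K^n(a),K^{n+1}(a)]}\to\cdots\to M_{[a,K(a)]}$. An equivalent repair in your language: show that the supremum of points reachable from $a$ by finite chains would itself be a separation point if it were smaller than $b$. Without some such argument the central step of the ``if'' direction is unproven.

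Two smaller points. First, your bridging principle is stated with an intermediate $M_W$, $W\subseteq U$, but when one allowed interval strictly contains another there are \emph{no} nonzero morphisms in either direction (e.g.\ $\Hom(M_{[1,2]},M_{[0,3]})=0=\Hom(M_{[0,3]},M_{[1,2]})$ by Lemma~\ref{lem:left lem}), and truncating the inner interval does not help; the intermediate must share an endpoint with the outer support, which is why the paper routes through the projective cover $M_{[p,K(p)]}\to M_{[p,q]}$ (shared left endpoint) together with the diamond of open/closed endpoint variants. Your principle is true, but only after this correction. Second, Jordan cells are excluded from $(\bS^1,\kappa)\pwf$ outright by Definition~\ref{defn:Kupisch} (compatibility requires all indecomposable summands to be strings), so your clauses about when Jordan cells ``occur'' and which component they land in are moot; this misreading is harmless here, since it only adds objects to the connected case, but no argument about them is needed. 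With these repairs your proof of both parts goes through and coincides with the paper's (Lemma~\ref{I connect} and Theorem~\ref{S1 components}).
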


When (pre-)Kupisch functions $\kappa$ and $\lambda$ have no separation points, any equivalence $(I,\kappa)\pwf\cong(J,\lambda)\pwf$ (or $(\bS^1,\kappa)\pwf\cong(\bS^1,\lambda)\pwf$) is induced by a push-forward $\lambda=f_*\kappa$ via some orientation preserving homeomorphism $f$.
  
\begin{thm}\label{main thm B}
(1) Let $F:(I,\kappa)\pwf\to(J,\lambda)\pwf$ be an equivalence of categories where $\kappa$ and $\lambda$ are pre-Kupisch functions on intervals $I$ and $J$ respectively, such that $\cS(\kappa)=\cS(\lambda)=\emptyset$.
Then $F$ induces an orientation preserving homeomorphism $f:I\to J$ such that $\lambda=f_* \kappa$.

(2) Let $F:(\bS^1,\kappa)\pwf\to(\bS^1,\lambda)\pwf$ be an equivalence of categories, where $\kappa$ and $\lambda$ are Kupisch functions such that $\cS(\kappa)=\cS(\lambda)=\emptyset$.
Then $F$ induces an orientation preserving  homeomorphism $f:\bS^1\to\bS^1$ such that $\lambda=f_* \kappa$.
\end{thm}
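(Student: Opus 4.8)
My plan is to recover the geometric data from $F$ alone: first the underlying bijection $f$ of points, then its orientation‑preserving character, and finally the identity $\lambda=f_*\kappa$. Throughout I use only that $F$, being an equivalence, preserves indecomposable objects, simple objects, monomorphisms and epimorphisms (hence simple tops and simple socles), and indecomposable projectives -- all categorical notions. Recall the Krull--Schmidt description: the indecomposables of $(I,\kappa)\pwf$ are the interval modules $M_U$ with $U$ a subinterval of some $[t,t+\kappa(t)]$, and $\Hom(M_{[a,b]},M_{[c,d]})\neq 0$ exactly when $c\le a\le d\le b$. Thus the subobjects of $M_{[a,b]}$ are the $M_{[c,b]}$ with $c\ge a$ and its quotients are the $M_{[a,d]}$ with $d\le b$; in particular $\Top M_{[a,b]}=M_{\{a\}}$ sits at the \emph{left} endpoint and $\soc M_{[a,b]}=M_{\{b\}}$ sits at the \emph{right} endpoint. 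The simple objects are precisely the point modules $M_{\{t\}}$ (any $M_{[a,b]}$ with $a<b$ has the proper quotient $M_{\{a\}}$, and $\{t\}$ is always valid), so $F$ permutes simples bijectively and determines a bijection $f\colon I\to J$ by $F(M_{\{t\}})\cong M_{\{f(t)\}}$.

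Next I recover order and orientation. The closed interval modules are exactly the indecomposables possessing \emph{both} a simple subobject and a simple quotient; the half‑open and open intervals fail one of these because the relevant endpoint is not attained. Since $F$ preserves this class together with tops and socles, $F(M_{[a,b]})$ is the unique indecomposable of $(J,\lambda)\pwf$ with top $M_{\{f(a)\}}$ and socle $M_{\{f(b)\}}$, namely $M_{[f(a),f(b)]}$, and the top/socle matching forces $f(a)<f(b)$ whenever $a<b$. Hence $f$ is orientation‑preserving on every pair spanned by a valid closed interval. To globalize, I join an arbitrary $s<t$ in $I$ by a finite chain $s=u_0<\cdots<u_n=t$ with each $[u_i,u_{i+1}]$ valid; this is possible precisely because $\cS(\kappa)=\emptyset$, which by Theorem~\ref{main thm A} is what makes the category connected (a separation point is exactly an obstruction one cannot step across). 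Applying the local statement along the chain gives $f(s)<f(t)$, and the same argument for a quasi‑inverse of $F$ yields the converse, so $f$ is an increasing bijection $I\to J$, i.e.\ an orientation‑preserving homeomorphism in the sense of this paper.

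To recover $\kappa$, note the indecomposable projectives are the $P_t=M_{[t,t+\kappa(t)]}$ with $\Top P_t=M_{\{t\}}$. Preservation of projectivity and tops gives that $F(P_t)$ is the indecomposable projective of $(J,\lambda)\pwf$ with top $M_{\{f(t)\}}$, i.e.\ $M_{[f(t),\,f(t)+\lambda(f(t))]}$; but the preceding paragraph computes $F(P_t)=M_{[f(t),\,f(t+\kappa(t))]}$. Comparing socles gives $f(t)+\lambda(f(t))=f(t+\kappa(t))$, which is exactly $(f_*\kappa)(f(t))$ by Definition~\ref{def:push-forward}, and as $f$ is onto this yields $\lambda=f_*\kappa$, proving (1). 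Part (2) runs along identical lines with string modules $\overline M_U$ in place of $M_U$: the simples $\overline M_{\{t\}}$ give a bijection $f$ of $\bS^1$, closed‑arc string modules again carry top and socle at their two endpoints so $F(\overline M_{[a,b]})=\overline M_{[f(a),f(b)]}$ respects the local counter‑clockwise order, and $\cS(\kappa)=\emptyset$ (so $|\cS(\kappa)\cap[0,1)|=0$) lets me propagate this order once around the circle; thus $f$ preserves the cyclic order and is a degree‑$1$ circle homeomorphism (\cite{H}), and the projective computation again gives $\lambda=f_*\kappa$ via Definition~\ref{def:circle push-forward}.

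I expect the main obstacle to be the global step that turns local orientation‑preservation into a genuine order (respectively cyclic‑order) isomorphism: it is exactly here that $\cS(\kappa)=\emptyset$ is indispensable, since across a separation point one may independently permute the orthogonal pieces and $f$ need not arise from a single homeomorphism (compare Example~\ref{n piece}). On $\bS^1$ there is the additional subtlety of accounting for the Jordan‑cell indecomposables of \cite{HR} and of verifying that $f$ winds around exactly once (degree $1$) rather than merely preserving the cyclic order setwise.
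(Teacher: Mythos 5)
Your part (1) is correct and is essentially the paper's own proof: the simples $M_{\{t\}}$ define $f$; a chain $s<K(s)<\cdots<K^m(s)<t$ of $K$-iterates (available since $\cS(\kappa)=\emptyset$ forces $\lim_n K^n(s)=\sup I$, Proposition~\ref{s cor} --- this, rather than Theorem~\ref{main thm A} itself, is the precise justification) globalizes the local order preservation; and comparing $F(M_{[t,K(t)]})$ with the unique indecomposable projective of $(J,\lambda)\pwf$ having simple top at $f(t)$ gives $f(t+\kappa(t))=f(t)+\lambda(f(t))$, i.e.\ $\lambda=f_*\kappa$. Your local step (a simple top and a simple socle force the image of a valid closed interval module to be the closed interval module with the matching endpoints) is a clean substitute for the paper's chain of nonzero morphisms and left-intersection computation; the two arguments do the same work.

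Part (2), however, has a genuine gap, located exactly at the point you flag in your final sentence but do not resolve. On $\bS^1$ the top and socle of a closed string module do \emph{not} determine it: $\overline M_{[a,b]}$ and $\overline M_{[a,b+k]}$, $k\in\NN$, have the same simple top (at $p(a)$) and the same simple socle (at $p(b)$) but different winding. Hence the assertion ``$F(\overline M_{[a,b]})=\overline M_{[f(a),f(b)]}$'' does not follow from preservation of top and socle; a priori the image is $\overline N_{[c,d+k]}$ with the endpoints pinned only modulo $\ZZ$ and $k$ undetermined. This breaks both halves of your argument: cyclic-order preservation does not follow, and the projective step collapses, because the projectives $\overline M_{[t,t+\kappa(t)]}$ genuinely wrap around when $\kappa(t)\geq 1$, so top/socle data only yields $\tilde f(t)+\lambda(\tilde f(t))\equiv \tilde f(t+\kappa(t)) \pmod{\ZZ}$, which determines $\lambda$ only modulo $1$. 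The paper closes this gap with two tools absent from your proposal: brick preservation (Corollary~\ref{cor:S1 brick}: $\overline M_U$ is a brick iff $U$ has length $<1$), combined with a subfactor argument excluding an order-flip, to show that a chosen lift $\tilde f$ is increasing (Construction~\ref{circle f}, Lemma~\ref{g increasing}); and an endomorphism-dimension count ($\dim_\bk\End_{\bS^1}(\overline M_{[s,t]})=d+1$ when $d\leq t-s<d+1$, Lemma~\ref{F interval}) to pin the winding of the image of an arbitrary, possibly wrapping, closed arc. Only after both steps does $\tilde f(x+\kappa(x))=\tilde f(x)+\lambda(\tilde f(x))$ hold as an equality of real numbers. (Your two stated worries are, by contrast, non-issues: objects of $(\bS^1,\kappa)\pwf$ have no band/Jordan-cell summands by Definition~\ref{defn:Kupisch}, and degree $1$ is automatic for any orientation-preserving circle homeomorphism, as the paper notes.)
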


To reveal the relation with representations of discrete Nakayama algebras, we associate to each basic Nakayama algebra $A$ having Kupisch series $(l_0,l_1,\cdots, l_{n-1})$ an associated Kupisch function $\kappa_A$ (Definition~\ref{associate kupish}). 
Then we show the following result.
\begin{thm}\label{main thm C}
Let $A$ be a basic connected Nakayama algebra with a Kupisch series $(l_0,l_1,\cdots, l_{n-1})$ and $\kappa_A$ its associated Kupisch function.
Then there is an exact embedding $F: A\modd\to (\bS^1, \kappa_A)\pwf$, which preserves projective objects.
Moreover, if $A$ is a linear Nakayama algebra, then there is an exact embedding $L:A\modd\to (\RR, \kappa_A)\pwf$, which preserves projective objects.
\end{thm}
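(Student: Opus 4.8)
The plan is to realize both $L$ and $F$ as an \emph{inflation} functor that replaces each vertex of the Nakayama quiver by a half-open unit arc, so that a uniserial module becomes a (possibly wrapping) interval module. I would treat the linear case first. Recall that the indecomposable $A$-modules are the uniserial modules $M[i,j]$ supported on consecutive vertices $i,i+1,\dots,j$ subject to the Kupisch bound $j-i+1\le l_i$, and that every homomorphism and every short exact sequence between indecomposables is governed by the combinatorics of these intervals. Placing the vertices at the integers $0,1,\dots,n-1$, I define $L$ on objects by $L(M)(x)=M_{\lfloor x\rfloor}$ for $x\in[0,n)$ and $0$ otherwise, with transition maps the appropriate composites of the arrow maps of $M$ (the identity inside each arc $[i,i+1)$); on indecomposables this sends $M[i,j]$ to the half-open interval module $M_{[i,\,j+1)}$. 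Since $j-i+1\le l_i$ and $\kappa_A$ is built from the Kupisch series (Definition~\ref{associate kupish}), one checks $[i,j+1)\subseteq[i,i+\kappa_A(i)]$, so $L$ lands in $(\RR,\kappa_A)\pwf$.

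Then I would verify the three required properties for $L$, all of which are essentially pointwise. For exactness: a sequence of $A$-modules is exact iff it is exact at every vertex, and $L$ reproduces the same vector spaces and linear maps at every point of $\RR$; since $(\RR,\kappa_A)\pwf$ is abelian with kernels and cokernels computed pointwise, $L$ carries short exact sequences to short exact sequences. For full faithfulness: a natural transformation $L(M)\to L(N)$ must be constant on each arc $[i,i+1)$ (the transition maps there are identities) and must commute with the maps across each integer point, which is exactly the data of an $A$-module morphism; hence $\Hom_{(\RR,\kappa_A)\pwf}(L(M),L(N))\cong\Hom_A(M,N)$ naturally. For preservation of projectives: $L(P_i)=M_{[i,\,i+l_i)}$, which by the definition of $\kappa_A$ is the indecomposable projective determined at the point $i$.

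For the cyclic case I would factor $F$ through the universal cover. Lifting an $A$-module along the covering of the cyclic quiver by the line quiver $\ZZ$ gives a finite-length representation on the line; inflating as above produces a half-open interval $\RR$-module, and applying the push-down $p_*\colon\RR\text{-rep}\to\bS^1\text{-rep}$ yields an object of $(\bS^1,\kappa_A)\pwf$. On indecomposables this is $F(M[i,j])=\overline{M}_{[i,\,j+1)}$. Exactness of $F$ then follows because inflation is exact and $p_*$ is exact by \cite{HR}, and preservation of projectives is again the computation $F(P_i)=\overline{M}_{[i,\,i+l_i)}$ together with Definition~\ref{associate kupish}.

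The hard part will be full faithfulness in the cyclic case, precisely when some projective length $l_i$ exceeds $n$: then $[i,i+l_i)$ wraps around $\bS^1$ and the string $\overline{M}_{[i,\,j+1)}$ overlaps itself, so a priori the push-down could create homomorphisms with no counterpart in $A\modd$. I would control this through the push-down/pull-back adjunction, which identifies $\Hom$ in $(\bS^1,\kappa_A)\pwf$ between these string modules with the covering-theory $\Hom$ of their lifts; the self-overlaps contribute exactly the multiplicities of the repeated composition factors, so the computation returns $\Hom_A(M,N)$ and no spurious maps survive. A final point of bookkeeping, which I would fold into the verification that $F(P_i)$ is the $\kappa_A$-projective, is the reconciliation of the half-open ends of the inflated modules with the closed intervals $[t,t+\kappa(t)]$ appearing in the projectives at the integer jump points of $\kappa_A$.
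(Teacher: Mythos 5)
Your inflation functor is, in substance, the inverse of the equivalence the paper constructs (the paper defines $G:\cM\to A\modd$ by evaluating a string module at the points $p_i=\frac{i+1}{n}$, proves it is an equivalence in Lemmas~\ref{lem:GM} and~\ref{lem:G eqv}, and sets $F=\iota\circ G^{-1}$), so the strategy is sound; but two specific steps in your write-up are wrong, not just unfinished bookkeeping. The first is the scaling. Kupisch functions are $1$-periodic (Definition~\ref{defn:Kupisch}) and $\kappa_A$ places the $i$-th vertex at $\frac{i}{n}$ (Definition~\ref{associate kupish}); in particular $\kappa_A(i)=\frac{l_0}{n}$ for \emph{every} integer $i$, so your claimed inclusion $[i,j+1)\subseteq[i,i+\kappa_A(i)]$ is false in general, and $L$ as defined does not land in $(\RR,\kappa_A)\pwf$. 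Worse, in the cyclic case the push-down along $p:\RR\to\RR/\ZZ$ identifies integer translates, so with vertices at the integers every simple $A$-module inflates to a unit-length interval and all of them push down to isomorphic strings $\overline M_{[0,1)}$: the object assignment collapses and cannot be faithful. Everything must be carried out with arcs $\left[\frac{i}{n},\frac{i+1}{n}\right)$ of length $\frac1n$ (equivalently, rescale by $t\mapsto t/n$ before pushing down).

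The second error is the orientation of your half-open intervals, and it is exactly the issue you deferred as ``a final point of bookkeeping.'' In $(I,\kappa)\pwf$ the indecomposable projectives are $M_{[t,K(t)]}$ and $M_{(t,K(t)]}$, closed at the \emph{right} (socle) end (Remark~\ref{rmk:pre-Kupisch function}(4)). Your convention sends $P_i$ to the right-open module $M_{[t,K(t))}$ with $t=\frac{i}{n}$ (after rescaling), and this object is \emph{not} projective: there is an epimorphism $M_{[t,K(t)]}\onto M_{[t,K(t))}$ with kernel $M_{[K(t),K(t)]}$, while $\Hom\bigl(M_{[t,K(t))},M_{[t,K(t)]}\bigr)=0$ by Lemma~\ref{lem:left lem}, because the left intersection is empty ($K(t)$ lies in $[t,K(t)]\setminus[t,K(t))$ but is not smaller than every point of $[t,K(t))$); hence the identity of $M_{[t,K(t))}$ does not lift through this epimorphism, and ``preserves projectives'' fails for your $F$ and $L$. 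The repair is to use intervals open at the top and closed at the socle, $M[i,j]\mapsto \overline M_{(\frac{i}{n},\frac{j+1}{n}]}$, as the paper does; with that flip (and the rescaling) your pointwise exactness and full-faithfulness arguments, together with the covering-theory Hom count for wrapping strings (Lemma~\ref{lem:n left lem}), go through and reproduce the paper's proof.
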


\endgroup

\subsection{Further Inverstigations}
It is also worth mentioning a connection of our results with dynamical systems.  Pre-Kupisch (respectively Kupisch) functions naturally give rise to self-maps on $\RR$  (respectively $\bS^1$) and hence defines a dynamical system on $\RR$ (respectively $\bS^1$). The construction of push-forward of (pre-)Kupisch functions corresponds to the notion of topological conjugacy in dynamic systems. Therefore Theorem~\ref{main thm B} asserts that the classification of categories $(\bS^1,\kappa)\pwf$ and $(\RR,\kappa)\pwf$ is equivalent to classifying their corresponding dynamical system up to positive topologically conjugacy.

\subsection*{Acknowledgements}
JR is supported at Ghent University by BOF grant 01P12621.
JR would like to thank Karin M.~Jacobsen, Charles Paquette, and Emine Y{\i}ld{\i}r{\i}m for helpful dicussions.
SZ would like to thank Shrey Sanadhya for helpful discussions.

\section{$\RR$- and $\bS^1$-representations}\label{sec:persistence modules}

In this section we recall representations (persistence modules) over $\RR$ and over $\bS^1$.
We describe interval modules for $\RR$, string modules for $\bS^1$, and the relationship between them via covering theory and orbit categories.

\subsection{$\RR$-representations}\label{sec:RR persistence}

Denote by $\RR$ the category of real numbers, where objects are real numbers and there is a unique generating morphism $g_{xy}:x\to y$ if $x\leq y$ and $g_{xx}=id_x$.
Composition is given by $g_{yz}\circ g_{xy}=g_{xz}$.
It follows that 
\begin{displaymath}
    \Hom_{\RR}(x,y) = \begin{cases}
        \{g_{xy}\} & x\leq y \\
        \emptyset &x>y
    \end{cases}.
\end{displaymath}

An \textbf{$\RR$-representation} over a field $\bk$ is a covariant functor $M: \RR \to \bk$-$\mathrm {Vec}.$ 
A morphism of $\RR$-representations is a natural transformation $f: M\to N$.
That is, a morphism is a collection of $\bk$-linear maps $f(x)$ for each $x\in\RR$ such that the following diagram commutes:
\begin{displaymath}
    \begin{tikzcd}
        M(x) \arrow[d,"f(x)",swap] \arrow[r,"M(g_{xy})"] & M(y) \arrow[d,"f(y)"] \\
        N(x) \arrow [r, "N(g_{xy})",swap] & N(y).
    \end{tikzcd}
\end{displaymath}

Let $(a,b]\subseteq\RR$ be an interval.
The $\RR$-representation $M_{(a,b]}$ is defined as:
\begin{align*}
    M_{(a,b]}(t) &= \begin{cases}
        \bk & a<t\leq b\\
        0 & \text{otherwise}
    \end{cases}
    &
    M_{(a,b]}(g_{st}) &= \begin{cases}
        1 & a < s\leq t\leq b \\
        0 & \text{otherwise}.
    \end{cases}
\end{align*}
In general, if $U$ is any interval, we define the {\bf interval module} $M_U$ similarly.

\begin{exm}\label{exm:morphism of RR persistence modules}
    There is a nonzero homomorphism $f: M_{[1,3]}\to M_{[0,2]}$ with 
    \begin{displaymath}
        f(s)=\begin{cases} 
            0=M_{[1,3]}(s)\stackrel{0}\to M_{[0,2]}(s)={\bk}& 0\leq s<1\\
            {\bk}=M_{[1,3]}(s)\stackrel{=}\to M_{[0,2]}(s)={\bk} & 1\leq s\leq 2 \\ 
            {\bk}=M_{[1,3]}(s)\stackrel{0}\to M_{[0,2]}(s)=0 & 2< s\leq 3
        \end{cases}
    \end{displaymath}
    We see that $\Ima f=M_{[1,2]}$, $\Ker f=M_{(2,3]}$ and $\Coker f=M_{[0,1)}$.
\end{exm}

We call a module $M$ \textbf{pointwise finite-dimensional} (\textbf{pwf}) if $\dim M(x)<\infty$, for each $x\in\RR$.
Denote by $\RR\pwf$ the category of pwf $\RR$-representations. 
Indecomposable objects in $\RR\pwf$ are exactly the interval modules.
 
Recall that an abelian category is \textbf{Krull--Remak--Schmidt} if every object decomposes uniquely up to isomorphism into a (possibly infinite) direct sum of indecomposables and each indecomposable has a local endomorphism ring.
 
\begin{thm}\cite{C,GR}\label{barcode decom}
    Any pointwise finite-dimensional  $\RR$-representation decomposes uniquely up to isomorphism into a direct sum of interval modules.
    In particular, $\RR\pwf$ is Krull--Remak--Schmidt.
\end{thm}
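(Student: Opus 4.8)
The plan is to prove the two assertions separately: \emph{existence} of a decomposition into interval modules, and its \emph{uniqueness}, which together yield the Krull--Remak--Schmidt property. Uniqueness is essentially formal once the local structure of interval modules is recorded, so I would dispatch it first and then concentrate the real work on existence.

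For uniqueness, I would begin by computing that each interval module has a local endomorphism ring. Any endomorphism $\varphi\colon M_U\to M_U$ is given by a family of scalars $\varphi(t)\in\bk$ for $t\in U$, and the commuting squares defining a morphism force $\varphi(s)=\varphi(t)$ whenever $s\le t$ both lie in the connected interval $U$; hence $\End(M_U)\cong\bk$, which is local. Once interval modules are known to have local endomorphism rings, any two decompositions of a fixed pwf representation into interval modules must agree up to a bijection of the (possibly infinite) index sets and isomorphism of the matched summands, by the Krull--Remak--Schmidt--Azumaya theorem. This gives both uniqueness and the final sentence of the statement.

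For existence the finite case is elementary and guides the general one. If $M$ is supported on finitely many points $t_1<\cdots<t_n$, then $M$ is just a representation $M(t_1)\to\cdots\to M(t_n)$ of a linearly oriented type $A_n$ quiver; choosing bases adapted to the images $\Ima M(g_{t_i t_j})$ and their kernels (equivalently, putting the sequence of linear maps into a normal form) splits $M$ into intervals, each basis vector determining the maximal subinterval on which it remains nonzero and is transported isomorphically. The substance of the theorem is to carry this out coherently over all of $\RR$ at once, and here I would follow Crawley--Boevey in two steps. First, use pointwise finite-dimensionality to show that $M$ decomposes as a direct sum of indecomposable subrepresentations with local endomorphism rings; the idea is to organize the finite-dimensional spaces $M(t)$ along with their images $\Ima M(g_{st})$ for $s<t$ and the subspaces detecting survival into $M(u)$ for $u>t$, and to exploit that these filtrations stabilize because each $M(t)$ is finite-dimensional. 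Second, classify the indecomposables: an indecomposable pwf representation $N$ must have support an interval (a gap would let one split off a summand using the structure maps) and must be one-dimensional on its support (a local endomorphism ring forbids larger jumps), so $N\cong M_U$ with $U=\supp N$.

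The hard part will be precisely this globalization. Over finite support the decomposition is routine linear algebra, but over the continuum one must control the behaviour of images and kernels at limit points (the left and right limits $s\nearrow t$ and $u\searrow t$) and verify that the local splittings are mutually compatible across all comparable pairs, with no basis vector escaping to a non-interval support; establishing that $M$ decomposes into indecomposables at all, rather than merely pointwise, is the genuinely delicate point for infinite-dimensional $M$. Pointwise finite-dimensionality is exactly the hypothesis that tames this limit analysis. As the result is standard, in the paper I would give the reduction above and refer to \cite{C,GR} for the remaining limit arguments.
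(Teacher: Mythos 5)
The paper offers no proof of this theorem at all---it is quoted directly from \cite{C,GR}---and your proposal ends up in the same place, since you explicitly defer the genuinely hard step (the global existence argument over all of $\RR$) to those same references. What you do spell out is accurate and consistent with how the cited proofs go: $\End(M_U)\cong\bk$ is local, so uniqueness follows from Krull--Remak--Schmidt--Azumaya, and the existence sketch (decomposition into indecomposables via stabilizing image/kernel filtrations, then identification of indecomposables with interval modules) is a fair outline of Crawley-Boevey's argument, so your treatment is essentially the same as the paper's.
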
 

For convenience, we introduce the following definition and lemma.
\begin{defn}\label{def:left intersect}
    Define the \textbf{left intersection} of intervals $U$ and $V$ as:  
    \begin{displaymath}
        U\cap_L V = \begin{cases}
            U\cap V & \text{if } (\forall x\in V\setminus U,\,  \forall y\in U,\,  y< x) \text{ and } (\forall x\in V,\, \forall y\in U\setminus V,\,  y< x) \\
            \emptyset & \text{otherwise}.
        \end{cases}
    \end{displaymath}
\end{defn}
\begin{lem}\label{lem:left lem}
    Let $M_U$ and $M_V$ be interval modules in $\RR\pwf$.
    Then
    \begin{displaymath}
        \Hom_{\mathbb R}(M_V,M_U)=\begin{cases}
            \bk & U\cap_L V \neq \emptyset \\
            0 & U\cap_L V = \emptyset.
        \end{cases}
    \end{displaymath}
\end{lem}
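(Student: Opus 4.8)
The plan is to compute $\Hom_{\RR}(M_V,M_U)$ directly from its pointwise components. First I would observe that a morphism $f:M_V\to M_U$ is a natural transformation whose component $f(x)$ is a linear map between spaces each equal to $\bk$ or $0$; hence $f(x)$ must vanish whenever $x\notin U\cap V$, and on $U\cap V$ it is multiplication by some scalar $c_x\in\bk$. Because $U\cap V$ is again an interval, for any $s\le t$ in $U\cap V$ both structure maps $M_V(g_{st})$ and $M_U(g_{st})$ are identities, so the naturality square forces $c_s=c_t$. Thus $f$ is determined by a single scalar $c$ with $f(x)=c\cdot\id$ on $U\cap V$, and $\Hom_{\RR}(M_V,M_U)$ is at most one-dimensional; the entire problem reduces to deciding for which pairs $U,V$ a nonzero $c$ is admissible.

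Next I would test the remaining naturality squares. The square indexed by $g_{st}$ (for $s\le t$) is automatically commutative unless $M_V(s)\ne 0$ and $M_U(t)\ne 0$, that is, unless $s\in V$ and $t\in U$, so only these need checking. Writing the two composites $f(t)\circ M_V(g_{st})$ and $M_U(g_{st})\circ f(s)$ as endomorphisms of $\bk$, one finds they equal $c\cdot[t\in V]$ and $c\cdot[s\in U]$ respectively. A short case analysis on whether each of $s,t$ lies in $U\cap V$, in $V\setminus U$, or in $U\setminus V$ then shows that $c$ is forced to be $0$ precisely in two situations: (A) there exist $s\in V\setminus U$ and $t\in U\cap V$ with $s\le t$; or (B) there exist $s\in U\cap V$ and $t\in U\setminus V$ with $s\le t$. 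In every other configuration the two composites agree for all $c$, so each $c$ gives a well-defined morphism.

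It remains to match the failure of (A) and (B) with Definition~\ref{def:left intersect}. Assuming $U\cap V\ne\emptyset$, I would show that the two conditions defining $U\cap_L V=U\cap V$ are equivalent to the simultaneous failure of (A) and (B). One direction is immediate: the first condition ($y<x$ for all $x\in V\setminus U$ and $y\in U$) forbids (A), and the second ($y<x$ for all $x\in V$ and $y\in U\setminus V$) forbids (B). For the converse I would fix a witness $p\in U\cap V$ and use the interval property: the failure of (A) places every point of $V\setminus U$ strictly above $p$, while the failure of (B) places every point of $U\setminus V$ strictly below $p$, and chaining these inequalities through $p$ recovers both conditions. If instead $U\cap V=\emptyset$, then $f\equiv 0$ and also $U\cap_L V=\emptyset$, so the formula holds trivially. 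Combining the three steps gives $\Hom_{\RR}(M_V,M_U)=\bk$ exactly when $U\cap_L V\ne\emptyset$ and $0$ otherwise.

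The main obstacle I anticipate lies in the last step: the obstructions (A) and (B) are phrased in terms of the common support $U\cap V$, whereas Definition~\ref{def:left intersect} quantifies over all of $U$ and $V$, so the equivalence genuinely requires the interval structure together with the witness $p\in U\cap V$ to bridge the two formulations. I would also take care with strict versus non-strict inequalities and with the possibly half-open nature of the intervals, since it is precisely these boundary cases (including $s=t$) that decide whether a given naturality square forces $c=0$.
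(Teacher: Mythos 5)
Your proof is correct. Note that the paper states Lemma~\ref{lem:left lem} without any proof at all (it is offered as a standard fact about interval persistence modules), so there is no argument of the authors to compare against; your direct verification is precisely the routine argument being left to the reader. Each step checks out: pointwise considerations force a morphism $f:M_V\to M_U$ to be a single scalar $c$ on $U\cap V$ and zero elsewhere; the only naturality squares that impose conditions are those with $s\in V$, $t\in U$, $s\le t$, where the two composites are $c\cdot[t\in V]$ and $c\cdot[s\in U]$, so $c$ is killed exactly by your configurations (A) and (B); and, given $U\cap V\neq\emptyset$, the simultaneous failure of (A) and (B) is equivalent to the two quantified conditions in Definition~\ref{def:left intersect}. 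In that last equivalence, the case $x\in V\setminus U$, $y\in U\cap V$ can be settled either by quoting the full negation of (A) (every point of $V\setminus U$ exceeds \emph{every} point of $U\cap V$, no convexity needed), or, as you propose, from the single-witness inequalities through $p$ plus convexity of $U\cap V$; both routes are valid, and your final paragraph correctly flags this as the one place where the interval structure and the witness genuinely enter. The degenerate case $U\cap V=\emptyset$ is also handled correctly, since then every component of $f$ has zero source or zero target while $U\cap_L V=\emptyset$ by definition.
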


\subsection{$\bS^1$-representations}\label{sec:bS persistence}

In this section we follow \cite{HR} in our definition of $\bS^1$.
 
Denote by $\bS^1$ the category of points $e^{2i\pi\theta}$ for $\theta\in\RR$.
There is a unique generating morphism $g_{xy}:x\to y$ for all pairs $x,y\in \bS^1$ where $g_{xx} = id_x$.
Additionally, we have a unique map $\omega_x:x\to x$ which captures going around $\bS^1$ exactly once from $x$ to $x$.
Consider points $x=e^{2i\pi\theta}$, $y=e^{2i\pi\phi}$, and $z=e^{2i\pi\psi}$, where $\theta\leq\phi\leq \psi$, $\phi-\theta< 1$, and $\psi-\phi< 1$.
Composition $g_{yz}g_{xy}$ is defined as 
\begin{displaymath}
    g_{yz}g_{xy}=\begin{cases}
        g_{xz} & 0\leq \psi-\theta < 1 \\
        g_{xz}\circ \omega_x=\omega_z \circ g_{xz} & 1\leq \psi - \theta < 2.
    \end{cases}
\end{displaymath}
Define $\omega^0_x := id_x = g_{xx}$.
In particular, for any $y\neq x$ in $\bS^1$, we have $\omega_x=g_{yx}\circ g_{xy}$.
It follows that
\begin{displaymath}
    \Hom_{\bS^1}(x,y) = \left\{ g_{xy}\circ \omega^n_x = \omega^n_y\circ g_{xy} : n\in{\mathbb N} \right\}.
\end{displaymath}
An \textbf{$\bS^1$-representation} over a field $\bk$ is a covariant functor $M: \bS^1 \to \bk\text{-}Vec.$ 
An $\bS^1$-representation homomorphism is a natural transformation $f:M\to N$.

In fact, covering theory (see \cite{A},\cite{BG},\cite{G},\cite{GR}) provides a convenient tool to understand the $\bS^1$-representations. 
Recall that a \textbf{spectroid} is  a Hom-finite (but not additive) $\bk$-category in
which all objects have local endomorphism algebras, and distinct objects are not
isomorphic (see \cite{GR}). Since the abelian group $\mathbb Z$ can act freely on the spectroid $\RR$ by $n\cdot x=x+n$, 
The category $\bS^1$ can be considered as the orbit category $\RR/\mathbb Z$ via the Galois covering
\begin{align*}
    p:\RR & \to \bS^1 \cong \RR/\mathbb Z \\
    t & \mapsto t \mod 1.
\end{align*}

Denote by $\RR \Modd$ and $\bS^1 \Modd$ the category of all representations on $\RR$ and $\bS^1$, respectively.
 From covering theory, the Galois covering $p$ gives rise to two important functors between these two categories:
 (1) the pull-up functor $p^\bullet: \bS^1 \Modd\to \RR \Modd$ sending $M\mapsto M\circ p$ and
 (2) the push-down functor $p_\bullet:\RR \Modd \to \bS^1 \Modd$ which satisfies $p_\bullet M(x)=\oplus_{t\in p^{-1}(x)} M(t)$.
 It is well-known that $(p_\bullet, p^\bullet)$ forms an adjoint pair and $p_\bullet$ preserves finitely generated objects.
 However, we also remind the readers that $p_\bullet$ does not preserve pointwise finite-dimensional modules.
 (For example, consider $p_\bullet ( \bigoplus_{i\in\ZZ} M_{[i,i]})$.)
 In the following, we give an explicit description of indecomposable $\bS^1$-representations using the push-down functor.

Define an $\bS^1$ string module $\overline{M}_U$ as the push-down of an
interval $\RR$-representation $M_U$ for some bounded interval $U$. 
Explicitly,
for each $x\in\bS^1$,  denote by $p_U^{-1}(x):=p^{-1}(x)\cap U$. Then $\overline{M}_U(x) :=\bigoplus\limits_{b\in p_U^{-1}(x)}M_U(b)$. 
  
Denote the elements of $p_U^{-1}(x)$ by $\{b_{i,x}\}_{i=1}^{|p_U^{-1}(x)|}$ such that $b_{i,x}< b_{i+1,x}$ for all $i$. 
For each generating morphism $g_{xy}:x\to y$, where $\overline{M}_U(x)\neq 0$, the linear maps $\overline{M}_U(g_{xy})$ are defined on the basis $\{b_{i,x}\}_{i=1}^{|p_U^{-1}(x)|}$.
\begin{displaymath}
    \overline{M}_U(g_{xy})(b_{i,x}) = \begin{cases}
        b_{j,y} & \exists b_{j,y}\in p_U^{-1}(y) \text{ such that } 0\leq b_{j,y}-b_{i,x} < 1 \\ 
        0 & \text{otherwise}.
    \end{cases}
\end{displaymath}
Due to $\overline{M}_U(\omega_x)=\overline{M}_U(g_{yx}g_{xy})$,
\begin{displaymath}
    \overline{M}_U(\omega_x)(b_{i,x}) = \begin{cases}
        b_{i+1,x} &  1\leq i <|p_U^{-1}(x)| \\ 
        0 & i=|p_U^{-1}(x)|.
    \end{cases}
\end{displaymath}
 
If $f: M_U\to M_V$ is a morphism of $\RR$-representations, where $U$ and $V$ are bounded intervals, then push-down functor induces a morphism $\bar f: \overline{M}_U\to \overline{M}_V$ as $\bar f(x)=\oplus f(b_{i,x}): \oplus M_U(b_{i,x})\to \oplus M_V(b'_{j,x})$, where $b_{i,x}\in p_U^{-1}(x)$ and $b'_{j,x}\in p_V^{-1}(x)$.

\begin{thm}\cite{HR}\label{decom s1}
    Any pointwise finite-dimensional $\bS^1$-representation decomposes into a direct sum of string modules and finitely-many Jordan cells.
    In particular, $\bS^1\pwf$ is Krull--Remak--Schmidt.
\end{thm}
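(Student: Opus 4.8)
The plan is to reduce the statement to the barcode decomposition on $\RR$ (Theorem~\ref{barcode decom}) by means of the Galois covering $p:\RR\to\bS^1$, and then to read off the indecomposable summands from the induced $\ZZ$-action. First I would pull $M$ up to $\widetilde M:=p^\bullet M$ on $\RR$. Since $\widetilde M(t)=M(p(t))$, the module $\widetilde M$ is again pwf, and the deck transformation $\tau(t)=t+1$ equips it with a canonical isomorphism $\widetilde M\xrightarrow{\sim}\widetilde M\circ\tau$; equivalently, $\widetilde M$ is a $\ZZ$-equivariant $\RR$-representation, the equivariant structure encoding the monodromy maps $\omega_x$. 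Conversely, descent along $p$ identifies $\bS^1$-representations with $\ZZ$-equivariant $\RR$-representations, so it suffices to decompose $\widetilde M$ equivariantly.

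By Theorem~\ref{barcode decom}, $\widetilde M\cong\bigoplus_i M_{U_i}$ with the multiset $\{U_i\}$ unique. Uniqueness forces the $\ZZ$-action to permute the summands, so $\{U_i\}$ is invariant under translation by $1$. I would first rule out half-infinite bars: if some $U_i$ were a half-line, then all of its $\ZZ$-translates lie in the barcode and share a common point, making $\dim\widetilde M$ infinite there and contradicting pwf-ness (this is exactly the obstruction behind the warning that $p_\bullet$ does not preserve pwf objects). Hence every bar is either bounded or equal to $\RR$. The bounded bars fall into \emph{free} $\ZZ$-orbits, and descending each such orbit along $p$ yields precisely one string module $\overline M_U$ as in Section~\ref{sec:bS persistence}.

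The remaining bars are the copies of $M_\RR$, which form the unique fixed orbit; let $m$ be their multiplicity, finite since $m\le\dim M(x)<\infty$. On this isotypic component the equivariant structure restricts to an automorphism $T$ of the $m$-dimensional multiplicity space, and choosing a normal form for $T$ (a Jordan form when $\bk$ is algebraically closed, otherwise a rational canonical form via the structure theorem for the finitely generated torsion $\bk[t,t^{-1}]$-module determined by $T$) splits this part into finitely many indecomposable blocks, each descending to a Jordan cell. Collecting the two parts gives $M$ as a direct sum of string modules and finitely many Jordan cells. For the Krull--Remak--Schmidt claim I would check that string modules and Jordan cells have local endomorphism rings and that the decomposition is unique, the former inherited from the local endomorphism rings of interval modules and of Jordan blocks, the latter from uniqueness of the barcode of $\widetilde M$ together with uniqueness of the normal form of $T$.

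I expect the main obstacle to be the descent step: making precise the equivalence between $\bS^1$-representations and $\ZZ$-equivariant $\RR$-representations, and checking that the equivariant structure restricts to an \emph{invertible} operator $T$ on the $M_\RR$-isotypic component, so that the resulting Jordan cells carry nonzero eigenvalues and are genuinely distinct from strings. A secondary technical point is verifying that push-down sends a single free orbit to exactly one string module and is compatible with the equivariant decomposition, so that no summands are created or lost when passing between $\RR$ and $\bS^1$.
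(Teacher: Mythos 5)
First, a remark on the comparison itself: the paper does \emph{not} prove this statement anywhere --- it is imported wholesale from \cite{HR} --- so your proposal has to be judged against what a complete proof genuinely requires, not against an argument in the text. Much of your skeleton is sound. The pull-up $\widetilde M=M\circ p$ is indeed pwf and strictly $\ZZ$-invariant; your exclusion of half-line bars from translation-invariance of the barcode plus pwf-ness is correct; and the splitting into a ``bounded-bar part'' and an ``$M_{\RR}$-part'' can even be made \emph{canonical}: once half-lines are excluded, the functorial subrepresentations $t\mapsto\bigcup_n\Ker \widetilde M(g_{t,t+n})$ and $t\mapsto\bigcap_{s\le t}\Ima \widetilde M(g_{st})$ are complementary, so they are automatically preserved by the deck action (this is the Fitting decomposition of the monodromy, which downstairs is pointwise nilpotent on the first piece and invertible on the second). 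Your treatment of the $M_{\RR}$-part as a finite-dimensional multiplicity space with an automorphism, decomposed by Jordan or rational canonical form, is then fine, and invertibility of $T$ is automatic.

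The genuine gap is the step you dismiss as ``a secondary technical point.'' You write that ``uniqueness forces the $\ZZ$-action to permute the summands,'' but Theorem~\ref{barcode decom} gives uniqueness of the \emph{multiset of intervals} (isomorphism classes), not of the summands as subspaces: the deck transformation permutes iso-classes, and need not preserve any chosen barcode decomposition, since an automorphism of $\bigoplus_i M_{U_i}$ can mix summands in complicated ways. What your argument actually needs is a $\ZZ$-\emph{equivariant} decomposition of the bounded part --- equivalently, that every pwf $\bS^1$-representation with pointwise nilpotent monodromy is a direct sum of string modules --- and translation-invariance of the multiset does not yield this. Indeed, the fixed orbit $M_{\RR}$ illustrates exactly how this inference fails: there, too, the multiset is invariant, yet the equivariant structure carries moduli (eigenvalues, Jordan blocks) and the module does \emph{not} split into multiplicity-one orbit pieces. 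One cannot appeal to classical Galois covering theory (Gabriel, Bongartz--Gabriel) either, because $\RR$ is not locally bounded and the modules are only pwf rather than finite-dimensional. Producing the equivariant decomposition --- for instance by an equivariant adaptation of Crawley-Boevey's functorial filtration argument --- is precisely the content of \cite{HR}; as written, your proposal assumes the theorem's hard part rather than proving it.
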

As we are constructing a continuous analogue of Nakayama representions, we study $\bS^1$-representations compatible with Kupisch functions. Thus, we are only interested in string modules.
   
Let $U$ be an interval of $\RR$.
For any $i\in\ZZ$, denote by $U+i$ the interval $\{x+i \mid x\in U\}$. 
The $\RR$-representations $M_{U+i}$ are called translated modules of $M_U$. Denote by $\widetilde{X}= X\circ p$ the pull-up of $\bS^1$-representation $X$. One can check that $\widetilde{\overline{M}}_U = \overline{M}_U\circ p = \bigoplus\limits_{i\in\mathbb Z}M_{U+i}$.
We have the following Lemma which should be compared with Lemma \ref{lem:left lem}.   
\begin{lem}\label{lem:n left lem}
    Let $\overline{M}_U$ and $\overline{M}_V$ be $\bS^1$ string modules.
    Then
    \begin{displaymath}
        \Hom_{\bS^1}(\overline{M}_U, \overline{M}_V) \cong \bigoplus\limits_{i\in\mathbb Z}\Hom_\RR(M_U,M_{V+i}).
    \end{displaymath}
\end{lem}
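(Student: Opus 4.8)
The plan is to exploit the fact that $\bS^1 \cong \RR/\ZZ$ via the Galois covering $p$, and that the push-down functor $p_\bullet$ admits $p^\bullet$ as right adjoint, so $\Hom_{\bS^1}(\overline{M}_U, \overline{M}_V) = \Hom_{\bS^1}(p_\bullet M_U, \overline{M}_V) \cong \Hom_{\RR}(M_U, p^\bullet \overline{M}_V)$ by the adjunction $(p_\bullet, p^\bullet)$. First I would invoke the identity already recorded in the excerpt, namely $p^\bullet \overline{M}_V = \widetilde{\overline{M}}_V = \overline{M}_V \circ p = \bigoplus_{i\in\ZZ} M_{V+i}$, which rewrites the target of the right-hand Hom as a direct sum of translated interval modules.

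The key remaining step is then to commute $\Hom_\RR(M_U, -)$ past the infinite direct sum $\bigoplus_{i\in\ZZ} M_{V+i}$ to obtain $\bigoplus_{i\in\ZZ}\Hom_\RR(M_U, M_{V+i})$. Since $M_U$ is an interval module supported on a bounded interval (string modules $\overline{M}_U$ are defined only for bounded $U$), the source is a finitely generated—indeed finitely presented—object in $\RR\Modd$, so $\Hom_\RR(M_U, -)$ commutes with arbitrary direct sums. Concretely, I would argue that by Lemma~\ref{lem:left lem} the space $\Hom_\RR(M_U, M_{V+i})$ is nonzero (and then one-dimensional) only when $U \cap_L (V+i) \neq \emptyset$, which forces the supports $U$ and $V+i$ to overlap on the left; because $U$ is bounded and the translates $V+i$ march off to $\pm\infty$ as $|i|\to\infty$, only finitely many summands contribute. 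Hence a homomorphism into the direct sum has all but finitely many components zero and the sum/product distinction is immaterial, giving the claimed isomorphism.

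To make the adjunction-and-direct-sum argument fully rigorous, I would verify that the isomorphism is natural and compatible with the explicit description of push-down morphisms $\bar f$ given just before Theorem~\ref{decom s1}: a morphism $\overline{M}_U \to \overline{M}_V$ pulls back under $p^\bullet$ to a $\ZZ$-equivariant morphism $\bigoplus_i M_{U+i} \to \bigoplus_i M_{V+i}$, and such equivariant families are determined by their restriction to a single $M_U$, whose image lands in $\bigoplus_i M_{V+i}$. Tracking this equivariance is what produces exactly the indexing by $i\in\ZZ$ on the right-hand side, rather than a quotient or a fixed single term.

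The main obstacle I anticipate is justifying the interchange of $\Hom$ with the infinite direct sum cleanly, and more subtly, confirming that the adjunction isomorphism restricts correctly to the subcategory of pwf representations—recall the excerpt explicitly warns that $p_\bullet$ does not preserve pointwise finite-dimensionality in general. The saving feature is the boundedness of $U$: it guarantees both that $M_U$ is finitely presented (so Hom commutes with coproducts) and that only finitely many translates $V+i$ can have overlapping support with $U$, so the apparently infinite direct sum on the right is effectively finite and stays within well-behaved objects. I would therefore foreground the boundedness hypothesis as the crucial input and present the finiteness-of-nonzero-summands claim, via Lemma~\ref{lem:left lem}, as the technical heart of the proof.
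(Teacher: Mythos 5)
Your proposal follows the paper's own proof essentially verbatim: the adjunction $(p_\bullet,p^\bullet)$ gives $\Hom_{\bS^1}(\overline{M}_U,\overline{M}_V)\cong\Hom_\RR(M_U,p^\bullet\overline{M}_V)$, the pull-up decomposes as $p^\bullet\overline{M}_V=\bigoplus_{i\in\ZZ}M_{V+i}$, and Hom is commuted past the direct sum because only finitely many $i$ give $\Hom_\RR(M_U,M_{V+i})\neq 0$ (boundedness of $U$ and $V$ via Lemma~\ref{lem:left lem}), which is exactly the justification the paper records in the proof of Corollary~\ref{cor:S1 hom space}. One small caveat: your parenthetical claim that $M_U$ is finitely generated (even finitely presented) is false for intervals open on the left, such as $U=(a,b]$, which admit no finite generating set; this is harmless, though, since your second argument---all but finitely many of the Hom-spaces $\Hom_\RR(M_U,M_{V+i})$ vanish, so every morphism into the direct sum has finitely many nonzero components---is the one that actually carries the interchange step.
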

\begin{proof}
    Since the push-down is left adjoint to the pull-up, we have
    \begin{displaymath}
        \Hom_{\bS^1}(\overline{M}_U, \overline{M}_V) \cong \Hom_\RR(M_U,\widetilde{\overline{M}}_V) = \Hom_\RR(M_U,\bigoplus\limits_{i\in\mathbb Z}M_{V+i}) \cong \bigoplus\limits_{i\in\mathbb Z}\Hom_\RR(M_U,M_{V+i}).\qedhere
    \end{displaymath}
\end{proof}
\begin{cor}[to Lemma \ref{lem:n left lem}]\label{cor:S1 hom space}
    Let $\overline{M}_U$ and $\overline{M}_V$ be $\bS^1$ string modules.
    Then
   \begin{displaymath}
        \dim_{\bk}\Hom_{\bS^1} (\overline{M}_U,\overline{M}_V) =
        \left|\{i: U\cap_L (V+i)\neq \emptyset\}\right|.
    \end{displaymath}
\end{cor}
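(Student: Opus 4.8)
The plan is to deduce the corollary directly from the two preceding lemmas by passing to $\bk$-dimensions. Starting from the isomorphism of Lemma~\ref{lem:n left lem},
$$\Hom_{\bS^1}(\overline{M}_U,\overline{M}_V)\cong\bigoplus_{i\in\ZZ}\Hom_\RR(M_U,M_{V+i}),$$
I would take dimensions on both sides. Since $\dim_\bk$ is additive over arbitrary direct sums of $\bk$-vector spaces, the left-hand side equals $\sum_{i\in\ZZ}\dim_\bk\Hom_\RR(M_U,M_{V+i})$, so the whole computation reduces to counting the nonzero summands. To make this legitimate as a finite number I must first know that only finitely many summands are nonzero, which is the one genuine input beyond the lemmas.

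Next I would evaluate each summand with Lemma~\ref{lem:left lem}, applied to the pair with source $M_U$ and target $M_{V+i}$. That lemma tells us each space $\Hom_\RR(M_U,M_{V+i})$ is either $\bk$ or $0$ according to whether the relevant left intersection (Definition~\ref{def:left intersect}) is nonempty or empty; in particular every summand has dimension $0$ or $1$. Hence the total dimension is exactly the cardinality of the set of indices $i$ for which this left intersection is nonempty, i.e.
$$\dim_\bk\Hom_{\bS^1}(\overline{M}_U,\overline{M}_V)=\left|\{i:U\cap_L(V+i)\neq\emptyset\}\right|,$$
which is the asserted formula.

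It remains to secure finiteness and to pin down the orientation. For finiteness: because $\overline M_U$ and $\overline M_V$ arise from bounded intervals $U$ and $V$, the translates $V+i$ are disjoint from $U$ for all but finitely many $i$, and a left intersection can be nonempty only when the ordinary intersection is; thus the index set is finite, which both legitimizes the interchange of $\dim$ and $\bigoplus$ above and ensures the right-hand side is a finite cardinality. I expect the only point requiring real care to be the \emph{orientation} of $\cap_L$: Lemma~\ref{lem:left lem} is stated with the intersection written in a fixed order relative to source and target, and since $\cap_L$ is not symmetric one must confirm that the index set produced by the source/target roles here is exactly $\{i:U\cap_L(V+i)\neq\emptyset\}$ as written in the statement. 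I would verify this by unwinding Definition~\ref{def:left intersect} on the translates and checking that the count of qualifying $i$ is unchanged under the relevant reversal of roles; this bookkeeping, rather than any conceptual difficulty, is the main obstacle, as the substantive content is entirely carried by Lemmas~\ref{lem:n left lem} and~\ref{lem:left lem}.
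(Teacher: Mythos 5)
Your approach is the same as the paper's: decompose $\Hom_{\bS^1}(\overline{M}_U,\overline{M}_V)$ via Lemma~\ref{lem:n left lem}, observe that boundedness of $U$ and $V$ makes the sum finite, and evaluate each summand with Lemma~\ref{lem:left lem}. That is precisely the paper's two-sentence proof, and your first two paragraphs plus the finiteness argument are fine.

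The gap is in the step you defer. You propose to reconcile the index set produced by Lemma~\ref{lem:left lem} with the one in the printed statement by ``checking that the count of qualifying $i$ is unchanged under the relevant reversal of roles.'' That check fails: the count is \emph{not} invariant under reversing the arguments of $\cap_L$. In Lemma~\ref{lem:left lem} the first argument of $\cap_L$ is the interval of the \emph{target} and the second that of the \emph{source}: $\Hom_\RR(M_V,M_U)\neq 0$ iff $U\cap_L V\neq\emptyset$ (Example~\ref{exm:morphism of RR persistence modules} pins down this orientation). Applied to the summands $\Hom_\RR(M_U,M_{V+i})$, whose source is $M_U$ and target is $M_{V+i}$, the lemma gives the condition $(V+i)\cap_L U\neq\emptyset$, so your argument proves
\[
\dim_\bk \Hom_{\bS^1}(\overline{M}_U,\overline{M}_V) = \left|\{i : (V+i)\cap_L U \neq \emptyset\}\right|,
\]
which is not the displayed formula. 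The two counts genuinely differ: take $U=[0,\tfrac12]$ and $V=[\tfrac14,\tfrac34]$. Then $\Hom_\RR(M_U,M_{V+i})=0$ for every $i$ (there is no nonzero map $M_{[0,1/2]}\to M_{[1/4,3/4]}$, since a nonzero image would have to be both a down-closed quotient of the source and an up-closed submodule of the target, and all other translates are disjoint from $U$), so $\Hom_{\bS^1}(\overline{M}_U,\overline{M}_V)=0$; yet $U\cap_L V=[\tfrac14,\tfrac12]\neq\emptyset$, so the right-hand side of Corollary~\ref{cor:S1 hom space} as printed equals $1$. Indeed $\left|\{i: U\cap_L(V+i)\neq\emptyset\}\right|$ computes $\dim_\bk\Hom_{\bS^1}(\overline{M}_V,\overline{M}_U)$, the Hom space in the opposite direction. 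So your instinct that the asymmetry of $\cap_L$ is the crux was exactly right, but the resolution is not an invariance proof: the corollary's display has its two arguments swapped relative to what Lemmas~\ref{lem:left lem} and~\ref{lem:n left lem} yield (an orientation slip that the paper's one-line proof also glosses over). The bookkeeping you postpone cannot be completed as described; the correct conclusion of your argument is the formula with $(V+i)\cap_L U$.
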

\begin{proof}
    Since there are only finitely many $i$ such that $\Hom_\RR(M_U, M_{V+i})\neq 0$ for bounded intervals $U$ and $V$, this is always a finite direct sum.  
    Hence, the statement follows from Lemmas \ref{lem:left lem} and \ref{lem:n left lem}.
\end{proof}
 
 Recall that a module $M$ is called a {\bf brick} if $\End(M)$ is a division ring. In particular, bricks are indecomposable. 
 \begin{cor}[to Corollary~\ref{cor:S1 hom space}] \label{cor:S1 brick}
 An $\bS^1$ string module $\overline M_U$ is a brick if and only if $|x-y|<1$ for $\forall x,y\in U$.
\end{cor}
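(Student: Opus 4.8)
The plan is to feed $V=U$ into the dimension formula of Corollary~\ref{cor:S1 hom space} and decide when $\End(\overline M_U)$ is as small as possible. First I would note that $\overline M_U$ is indecomposable (it is a string module), so $\End(\overline M_U)$ is a local $\bk$-algebra, and a local ring is a division ring exactly when it has no nonzero nilpotents. By Corollary~\ref{cor:S1 hom space},
\[
\dim_\bk\End(\overline M_U)=\bigl|\{\,i\in\ZZ:\ U\cap_L(U+i)\neq\emptyset\,\}\bigr|,
\]
and since for $V=U$ both sets $V\setminus U$ and $U\setminus V$ are empty, the two clauses of Definition~\ref{def:left intersect} hold vacuously, so $i=0$ always contributes the line $\bk\cdot\id$. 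Thus the corollary reduces the statement to the combinatorial equivalence
\[
(\exists\, i\neq0:\ U\cap_L(U+i)\neq\emptyset)\quad\Longleftrightarrow\quad(\exists\, x,y\in U:\ |x-y|\geq1).
\]

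For the direction that yields a brick, assume $|x-y|<1$ for all $x,y\in U$. If $t\in U\cap(U+i)$ for some $i\neq0$, then $t$ and $t-i$ both lie in $U$ with $|t-(t-i)|=|i|\geq1$, a contradiction; hence $U\cap(U+i)=\emptyset$, and since $U\cap_L(U+i)$ is by definition either this intersection or $\emptyset$, it is empty. Only $i=0$ survives, so $\dim_\bk\End(\overline M_U)=1$, giving $\End(\overline M_U)=\bk$, and $\overline M_U$ is a brick.

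For the converse, assume some $x,y\in U$ satisfy $|x-y|\geq1$; equivalently $\sup U-\inf U>1$, or $\sup U-\inf U=1$ with both endpoints in $U$. Here I would take $i=1$: the overlap $U\cap(U+1)$ is nonempty (it reduces to the single point $\sup U$ in the borderline case $U=[\inf U,\inf U+1]$), and because the shift is to the right, every point of $(U+1)\setminus U$ lies above all of $U$ while every point of $U\setminus(U+1)$ lies below all of $U+1$, which are exactly the two clauses of Definition~\ref{def:left intersect}; hence $U\cap_L(U+1)=U\cap(U+1)\neq\emptyset$ and $\dim_\bk\End(\overline M_U)\geq2$. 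To upgrade this to non-brickness over an arbitrary field, I would use Lemma~\ref{lem:n left lem} to select a nonzero $\phi$ in a summand $\Hom_\RR(M_U,M_{U+i_0})$ with $i_0\neq0$ and observe that composition in $\End(\overline M_U)\cong\bigoplus_i\Hom_\RR(M_U,M_{U+i})$ is graded by the $\ZZ$-shift of the covering $p$, so $\phi^k$ lands in $\Hom_\RR(M_U,M_{U+k i_0})$, which vanishes once $|k i_0|$ exceeds the diameter of the bounded interval $U$. Thus $\phi$ is a nonzero nilpotent of the local ring $\End(\overline M_U)$, which is therefore not a division ring.

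The main obstacle is the bookkeeping in the converse: verifying the two clauses of the left intersection for the shifted interval $U+1$ while tracking which endpoints belong to $U$, and in particular the borderline case $\sup U-\inf U=1$, where $U\cap(U+1)$ collapses to the single point $\sup U$ and is nonempty precisely when $\sup U\in U$. The grading/nilpotency step is the other delicate point, since $\dim_\bk\End(\overline M_U)\geq2$ alone would not exclude $\End(\overline M_U)$ being a field extension or division algebra over a non-algebraically-closed $\bk$; producing an explicit nilpotent is what makes the argument field-independent.
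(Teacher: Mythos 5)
Your proof is correct and follows the same route the paper intends: the statement is left as an immediate corollary of Corollary~\ref{cor:S1 hom space}, and you instantiate that formula with $V=U$, checking that only $i=0$ survives precisely when no two points of $U$ are at distance $\geq 1$. Your extra step producing an explicit nilpotent via the $\ZZ$-graded composition in $\End_{\bS^1}(\overline M_U)\cong\bigoplus_{i}\Hom_\RR(M_U,M_{U+i})$ is a worthwhile completion rather than a detour: it rules out $\End_{\bS^1}(\overline M_U)$ being a division algebra of dimension $\geq 2$ over a non-algebraically-closed field, a point the paper leaves implicit.
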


The push-down functor does not preserve pwf representations, so we instead use the concept of orbit categories and restrict the push-down functor to a subcategory of $\RR\pwf$.

\begin{defn}\label{def:bpwf and spwf}
Let $\RR\bpwf$ be the full subcategory of $\RR\pwf$, where for each object $M$ in $\RR\bpwf$, there exists $a\leq b\in \RR$ such that $M(x)=0$ if $x\leq a$ or $x\geq b$.

Let $\bS^1\spwf$ be the full subcategory of $\bS^1\pwf$ whose objects do not have a band summand.
\end{defn}

It is a straightforward check that $\RR\bpwf$ is a Serre subcategory of $\RR\pwf$ and is thus abelian.
Moreover, $\RR\bpwf$ is Krull--Remak--Schmidt.
Since we have the cyclic orientation on $\bS^1$, the category $\bS^1\spwf$ is a Serre subcategory of $\bS^1\pwf$; in particular it is also abelian and Krull--Remak--Schmidt.

By Theorem~\ref{decom s1}, we see that every pwf $\bS^1$-representation that does not have a band summand is isomorphic to the push-down of some object $M$ in $\RR\bpwf$.
By restricting the push-down functor to $\RR\bpwf$, we view $\bS^1\spwf$ as the orbit category of $\RR\bpwf$ where the autoequivalence on $\RR\bpwf$ is induced by the action of $\ZZ$.

\section{(Pre-)Kupisch Functions and Compatible Representations}\label{sec:Kupisch functions and compatible representations}

In this section we define a continuous analogue to Kupisch series called (pre-)Kupisch functions.
Compatible modules form a subcategory of $\RR\pwf$ (or $\bS^1\pwf$) that behaves like the category of finite-dimensional representations of a finite-dimensional Nakayama algebra.

Throughout this paper, functors between $\RR\pwf$, $\bS^1\pwf$, and their subcategories are assumed to be $\bk$-linear and additive.
 
\subsection{Pre-Kupisch functions and compatible $\RR$-representations}\label{sec:pre-Kupisch functions}
 
\begin{defn}\label{def:pre-Kupisch function}\label{def:compatible with pre-Kupisch function}
    Let $I\subseteq \RR$ be an interval. 
    \begin{enumerate}
    \item A \textbf{pre-Kupisch function} on $I$ is $\kappa:I\to \RR^{>0}$ such that
        \begin{itemize}
            \item $K(t):=\kappa(t)+t$ is increasing on $I$ and
            \item For all $t\in I$, we have $[t,K(t)]=[t,\kappa(t)+t]\subseteq I$.
        \end{itemize}
    \item We call an $\RR$-representation \textbf{compatible with $\kappa$} if each of its indecomposable summands $M_U$ satisfies $ U\subseteq[t, K(t)]\subseteq I$, for some $t\in I$.
    Let $(I,\kappa)\pwf$ be the full subcategory of $\RR\pwf$ consisting of precisely the modules compatible with $\kappa$.
    \end{enumerate}
\end{defn}

\begin{rmk}\label{rmk:pre-Kupisch function}
We remark on some immediate facts about pre-Kupisch functions.
\begin{enumerate}
    \item For any pre-Kupisch function $\kappa$ on an interval $I$, the category $(I,\kappa)\pwf$ is a Serre subcategory of $\RR\pwf$ and is thus abelian and Krull--Remak--Schmidt.
    \item For a pre-Kupisch function $\kappa:\RR\to\RR^{>0}$, the restriction $\kappa|_I:I\to \RR^{>0}$ on an interval $I$ is not always a pre-Kupisch function. In particular, there are no pre-Kupisch functions on intervals of the form $[a,b]$, $(a,b]$ or $(-\infty,b]$.
    This is because $[b,b+\kappa(b)]\not\subseteq I$. 
    
    \item  For any pre-Kupisch function $\kappa:\RR\to\RR^{>0}$, the restriction $\kappa|_{\bR^{\geq 0}}:\bR^{\geq 0}\to \RR^{>0}$ is a pre-Kupisch function on $\bR^{\geq 0}$.
    
    \item A pre-Kupisch function $\kappa$ determines the indecomposable projective modules in $(I,\kappa)\pwf$, which are the interval modules $M_{[t,K(t)]}$ and $M_{(t,K(t)]}$, for all $t\in I$.
\end{enumerate}
\end{rmk}

We make use of the following fact.
\begin{prop}\label{prop:interval bijection}
Let $I,J$ be intervals. An increasing bijection $f:I\to J$ is continuous and hence a homeomorphism.  \end{prop}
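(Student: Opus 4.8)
The plan is to prove that an increasing bijection $f:I\to J$ between intervals is continuous, from which it follows by a standard argument that $f$ is a homeomorphism (since the inverse $f^{-1}:J\to I$ is also an increasing bijection between intervals, hence continuous by the same result). So the core task is continuity of a monotone bijection, and I would establish this directly from the order structure.

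First I would recall that an increasing function on an interval can only have jump discontinuities: at any point $t$ in the interior of $I$, the one-sided limits $f(t^-)=\sup_{s<t}f(s)$ and $f(t^+)=\inf_{s>t}f(s)$ both exist by monotonicity and boundedness, and satisfy $f(t^-)\le f(t)\le f(t^+)$. Continuity at $t$ fails precisely when one of these inequalities is strict, creating a gap in the range. The key step is to rule out such gaps using surjectivity. Suppose $f(t^-)<f(t)$ for some interior point $t$; then any value $y$ with $f(t^-)<y<f(t)$ cannot be attained: for $s<t$ we have $f(s)\le f(t^-)<y$, and for $s\ge t$ we have $f(s)\ge f(t)>y$. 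This contradicts surjectivity onto $J$, provided such a $y$ exists in $J$. Because $f(t^-)$ and $f(t)$ are both values in $J$ and $J$ is an interval, the open interval $(f(t^-),f(t))$ lies in $J$, so such a $y$ does exist. The symmetric argument handles a strict jump $f(t)<f(t^+)$. Endpoints of $I$ (if any) are treated the same way using only the relevant one-sided limit.

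Therefore no jumps occur, so the one-sided limits coincide with the value at every point and $f$ is continuous on $I$. Applying the identical reasoning to $f^{-1}$ — which is increasing because $f$ is, and is a bijection between the intervals $J$ and $I$ — yields continuity of $f^{-1}$, so $f$ is a homeomorphism.

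The main obstacle, and the point requiring care, is the interaction between the jump argument and the precise structure of the intervals $I$ and $J$, including whether they are open, closed, half-open, or unbounded. One must verify that when a hypothetical jump occurs at an interior point, the skipped values genuinely lie in $J$ (using that $J$ is an interval and the bounding values are in $J$), and one must handle boundary points separately since only one side is available there. I expect the bulk of the work to be this bookkeeping of cases rather than any deep idea; the essential mechanism is simply that monotonicity forces jump-type discontinuities while surjectivity onto an interval forbids jumps.
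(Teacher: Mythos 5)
Your proof is correct. The paper itself gives no proof of this proposition --- it is stated as a known fact from real analysis (of the kind found in Rudin's \emph{Principles of Mathematical Analysis}, which the paper cites elsewhere for the Darboux--Froda theorem) --- and your argument is precisely the standard one: monotonicity allows only jump discontinuities, surjectivity onto an interval forbids jumps, and the same reasoning applied to $f^{-1}$ gives the homeomorphism. One small wording caution: $f(t^-)$ is not necessarily a \emph{value} of $f$ (it need not lie in the image), but it does lie in the interval $J$, since it is squeezed between $f(s)$ and $f(t)$ for any $s<t$ and $J$ is convex; this is all your argument actually needs, so the proof stands.
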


Increasing bijections $f:I\to J$ are also called  orientation-preserving homeomorphisms.
Denote by $\Homeo_+(I,J)$ the set of orientation-preserving homeomorphisms from $I$ to $J$ and $\Homeo_+(I)$ the set of orientation-preserving homeomorphisms from $I$ to $I$.
The following result shows that an orientation-preserving homeomorphism preserves subintervals and their left intersections.
\begin{lem}\label{lem:left intersection lemma}
    Let $I,J$ be intervals in $\RR$, $f\in\Homeo_+(I,J)$, and $U, V$ intervals in $I$.
    Then $f(U \cap_L V)=f(U) \cap_L f(V)$.
\end{lem}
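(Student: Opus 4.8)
The plan is to exploit two structural facts about an orientation-preserving homeomorphism $f\in\Homeo_+(I,J)$: as a bijection it commutes with the set operations of intersection and difference, and as a strictly increasing map it preserves and reflects the strict order. Together these will show that the two defining conditions in Definition~\ref{def:left intersect} hold for the pair $(U,V)$ precisely when they hold for the pair $(f(U),f(V))$, which forces the two cases of the definition to align on both sides.

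First I would record the preliminary observations. Because $f$ is an increasing bijection, the images $f(U)$ and $f(V)$ are again intervals in $J$, so $f(U)\cap_L f(V)$ is defined. Since $f$ is a bijection (in particular injective), it satisfies $f(U\cap V)=f(U)\cap f(V)$, $f(V\setminus U)=f(V)\setminus f(U)$, and $f(U\setminus V)=f(U)\setminus f(V)$. Since $f$ is strictly increasing, $y<x\iff f(y)<f(x)$ for all $x,y\in I$.

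Next I would translate the two conditions in Definition~\ref{def:left intersect} through $f$. The first condition, $\forall x\in V\setminus U,\ \forall y\in U,\ y<x$, is equivalent to the corresponding condition for $(f(U),f(V))$: reindexing the quantifiers via the bijection $f$ over $f(V\setminus U)=f(V)\setminus f(U)$ and over $f(U)$, and converting each inequality $y<x$ into $f(y)<f(x)$ by order-preservation, turns it into $\forall x'\in f(V)\setminus f(U),\ \forall y'\in f(U),\ y'<x'$. The same argument applied to $U\setminus V$ and $V$ shows that the second condition for $(U,V)$ is equivalent to the second condition for $(f(U),f(V))$. The key point is that these equivalences are genuine biconditionals, using bijectivity to reindex and monotonicity in both directions.

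Finally I would assemble the two cases. If both conditions hold, then $U\cap_L V=U\cap V$ and simultaneously $f(U)\cap_L f(V)=f(U)\cap f(V)$, so $f(U\cap_L V)=f(U\cap V)=f(U)\cap f(V)=f(U)\cap_L f(V)$. If at least one condition fails, then by the equivalences it fails on both sides, whence $U\cap_L V=\emptyset$ and $f(U)\cap_L f(V)=\emptyset$, giving $f(U\cap_L V)=f(\emptyset)=\emptyset=f(U)\cap_L f(V)$. In either case the two sides agree, proving the lemma. I do not anticipate a genuine obstacle; the only thing requiring care is the bookkeeping for the non-symmetric two-condition definition, and ensuring the quantifier reindexing under $f$ is justified by bijectivity rather than by monotonicity alone.
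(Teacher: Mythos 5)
Your proof is correct and follows essentially the same route as the paper's: transport the two defining conditions of $\cap_L$ through $f$, using that an increasing bijection commutes with $\cap$ and $\setminus$ and both preserves and reflects strict order, then conclude by the two-case analysis. If anything, your biconditional phrased directly on the defining conditions is slightly cleaner than the paper's claim about nonemptiness of $U\cap_L V$, since those conditions can hold while $U\cap V=\emptyset$ (e.g.\ disjoint intervals with $U$ entirely to the left of $V$), an edge case the paper's wording glosses over.
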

\begin{proof} First, we claim that $U\cap_L V=\emptyset$ if and only if $f(U)\cap_L f(V)=\emptyset$. Indeed,  if $U\cap_L V\neq \emptyset$, then for   $(\forall x\in V\setminus U,\,  \forall y\in U,\,  y< x)$ and $(\forall x\in V,\, \forall y\in U\setminus V,\,  y< x)$. Since $f$ is an increasing bijection, $\forall x\in f(V)\setminus f(U),\,  \forall y\in f(U)$, it follows that  $f^{-1}(x)\in V\setminus U$ and $f^{-1}(y)\in U$, hence  $f^{-1}(y)< f^{-1}(x)$, which implies $y<x$. Similarly, $(\forall x\in f(V),\, \forall y\in f(U)\setminus f(V),\,  y< x)$. So $f(U)\cap_L f(V)\neq \emptyset$. Then the converse direction of the claim follows directly from the fact that $f^{-1}:J\to I$ is again an increasing homeomorphism.

To prove the lemma, notice that if $U\cap_L V=\emptyset$, then 
$f(U \cap_L V)=\emptyset=f(U) \cap_L f(V)$. If $U\cap_L V\neq \emptyset$, then $U\cap_L V=U\cap V$. So $f(U\cap_L V)=f(U\cap V)=f(U)\cap f(V)=f(U)\cap_L f(V)$.\qedhere 

\end{proof}

Given a pre-Kupisch function $\kappa$ on an interval $I$ and a increasing bijection $I\to J$, we now define the push-forward $f_*\kappa$ and show that this is a pre-Kupisch function on $J$.

\begin{defn}\label{def:push-forward}
    Let $\kappa$ be a pre-Kupisch function on interval $I$ and let $f:I\to J$ be an increasing bijection.
    We define the \textbf{push-forward} by
    \begin{displaymath}
        f_*\kappa(t):= f(\kappa\circ f^{-1}(t)+f^{-1}(t))-t.
    \end{displaymath}
\end{defn}

Notice that by definition $f$ sends any interval $[t,t+\kappa(t)]$ to the interval $[f(t),f(t)+f_*\kappa(f(t))]$.

\begin{lem}\label{lem:pushforward pre} 
    If $\kappa$ is a pre-Kupisch function on an interval $I$ and $f\in\Homeo_+(I,J)$, then $f_*\kappa$ is a pre-Kupisch function on $J$.
    Furthermore, $f$ defines a bijection from the set of pre-Kupisch functions on $I$ to the set of pre-Kupisch functions on $J$.
\end{lem}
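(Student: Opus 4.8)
The plan is to recast the push-forward in terms of the associated right-endpoint functions, where it becomes a conjugation, and then read off all three requirements from that single identity. For a pre-Kupisch function $\mu$ on an interval write $K_\mu(t) := \mu(t) + t$, so that $K_\kappa = K$ in the notation of Definition~\ref{def:pre-Kupisch function}. Unwinding the defining formula, for $t \in J$ and $s := f^{-1}(t)$ one gets
\[
    f_*\kappa(t) + t = f(\kappa(s) + s) - t + t = f(K_\kappa(s)),
\]
so that $K_{f_*\kappa}(t) = f(K_\kappa(f^{-1}(t)))$, i.e.
\[
    K_{f_*\kappa} = f \circ K_\kappa \circ f^{-1}.
\]
This conjugation identity (valid verbatim for any increasing bijection in place of $f$) is the crux; everything else follows from it together with Proposition~\ref{prop:interval bijection}, which guarantees that $f$ and $f^{-1}$ are increasing homeomorphisms between intervals.

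I would then check the two conditions of Definition~\ref{def:pre-Kupisch function} for $\lambda := f_*\kappa$. Positivity: since $\kappa(s) > 0$ forces $K_\kappa(s) > s$ (and $K_\kappa(s) \in I$ by the containment hypothesis on $\kappa$, so that $f(K_\kappa(s))$ is defined), the strict monotonicity of $f$ gives $K_\lambda(t) = f(K_\kappa(s)) > f(s) = t$, whence $\lambda(t) = K_\lambda(t) - t > 0$ and $\lambda$ indeed lands in $\RR^{>0}$. Monotonicity of $K_\lambda$ is immediate from the conjugation identity, being a composite of the three increasing maps $f^{-1}$, $K_\kappa$, and $f$. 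For the containment $[t, K_\lambda(t)] \subseteq J$, I would apply $f$ to the inclusion $[s, K_\kappa(s)] \subseteq I$: as an increasing homeomorphism, $f$ carries this closed interval exactly onto $[f(s), f(K_\kappa(s))] = [t, K_\lambda(t)]$, and this image lies in $f(I) = J$.

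For the bijection statement I would exhibit $(f^{-1})_*$ as a two-sided inverse of $f_*$, the bookkeeping again being handled by the conjugation identity. Applying it with $f^{-1}$ in place of $f$ gives $K_{(f^{-1})_*\lambda} = f^{-1} \circ K_\lambda \circ f$ for any pre-Kupisch function $\lambda$ on $J$; substituting $\lambda = f_*\kappa$ and $K_{f_*\kappa} = f \circ K_\kappa \circ f^{-1}$ yields $K_{(f^{-1})_*(f_*\kappa)} = K_\kappa$. Since a pre-Kupisch function is recovered from its endpoint function via $\mu(t) = K_\mu(t) - t$, equality of endpoint functions gives $(f^{-1})_*(f_*\kappa) = \kappa$, and the symmetric computation gives $f_*((f^{-1})_*\lambda) = \lambda$. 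Hence $f_*$ is a bijection from the pre-Kupisch functions on $I$ to those on $J$, with inverse $(f^{-1})_*$.

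I do not anticipate a genuine obstacle; the only step requiring care is the containment condition, where one must use that an increasing bijection sends a closed subinterval \emph{onto} the closed interval between the images of its endpoints (Proposition~\ref{prop:interval bijection}), rather than merely respecting set inclusions. Once the identity $K_{f_*\kappa} = f \circ K_\kappa \circ f^{-1}$ is established, the remaining verifications are routine.
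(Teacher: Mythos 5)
Your proof is correct and complete. The paper in fact gives no proof of this lemma at all—it is treated as a routine verification, the only hint being the remark immediately after Definition~\ref{def:push-forward} that $f$ sends each interval $[t,t+\kappa(t)]$ to $[f(t),f(t)+f_*\kappa(f(t))]$; your conjugation identity $K_{f_*\kappa}=f\circ K_\kappa\circ f^{-1}$ is precisely that observation made explicit, and your verification of positivity, monotonicity, the containment condition (where, as you note, one only needs that $f$ carries $[s,K_\kappa(s)]$ onto $[t,K_\lambda(t)]\subseteq J$, or even just convexity of $J$), and the two-sided inverse $(f^{-1})_*$ is exactly the argument the authors left implicit.
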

 
\begin{thm} \label{interval equiv}
Let $I, J$ be intervals of $\RR$. If $\kappa$ is a pre-Kupisch function on $I$ and $f\in\Homeo_+(I,J)$ then $(I,\kappa)\pwf\cong(J, f_*\kappa)\pwf$.
\end{thm}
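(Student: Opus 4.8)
The plan is to build the equivalence directly from $f$ by precomposition with $f^{-1}$ (extended by zero), and then verify the only nontrivial point, namely that this functor respects the two compatibility conditions. Concretely, I would define $F:(I,\kappa)\pwf\to\RR\pwf$ on objects by $F(M)(t)=M(f^{-1}(t))$ and $F(M)(g_{st})=M(g_{f^{-1}(s)f^{-1}(t)})$ for $s\le t$ in $J$, setting $F(M)(t)=0$ with all structure maps zero whenever $t\notin J$; on morphisms by $F(\phi)(t)=\phi(f^{-1}(t))$ on $J$ and zero elsewhere. Since $f^{-1}\colon J\to I$ is an order isomorphism and $J$ is an interval (so $s\le t\le u$ with $s,u\in J$ forces $t\in J$), functoriality of $F(M)$ reduces to that of $M$, and $F(M)$ is pwf because $\dim F(M)(t)=\dim M(f^{-1}(t))<\infty$. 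By the blanket convention of Section~\ref{sec:Kupisch functions and compatible representations}, $F$ is $\bk$-linear and additive, and it visibly commutes with arbitrary direct sums.

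The heart of the argument is checking that $F$ actually lands in $(J,f_*\kappa)\pwf$. The key observation is that $F(M_U)=M_{f(U)}$ for every interval $U\subseteq I$, since the support of $F(M_U)$ is $\{t:f^{-1}(t)\in U\}=f(U)$ and the structure maps are the identity precisely on $f(U)$. Using Theorem~\ref{barcode decom} I would write $M=\bigoplus_\alpha M_{U_\alpha}$, so that compatibility of $M$ supplies $t_\alpha\in I$ with $U_\alpha\subseteq[t_\alpha,K(t_\alpha)]$; because $F$ preserves direct sums and by Krull--Remak--Schmidt, the indecomposable summands of $F(M)$ are exactly the $M_{f(U_\alpha)}$. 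The remark following Definition~\ref{def:push-forward} gives $f([t_\alpha,t_\alpha+\kappa(t_\alpha)])=[f(t_\alpha),f(t_\alpha)+f_*\kappa(f(t_\alpha))]$, and since $f$ is an increasing bijection, $U_\alpha\subseteq[t_\alpha,K(t_\alpha)]$ yields $f(U_\alpha)\subseteq[s_\alpha,s_\alpha+f_*\kappa(s_\alpha)]$ for $s_\alpha:=f(t_\alpha)\in J$. Hence $F(M)$ is compatible with $f_*\kappa$.

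To finish, I would exhibit a strict inverse rather than merely a quasi-inverse. By Lemma~\ref{lem:pushforward pre}, $f_*\kappa$ is a pre-Kupisch function on $J$ and $f^{-1}\in\Homeo_+(J,I)$, so the same construction applied to $f^{-1}$ produces $G:(J,f_*\kappa)\pwf\to(I,(f^{-1})_*(f_*\kappa))\pwf$. A short computation from Definition~\ref{def:push-forward} shows $(f^{-1})_*(f_*\kappa)=\kappa$: from $f_*\kappa(f(t))+f(t)=f(\kappa(t)+t)$ one gets $(f^{-1})_*(f_*\kappa)(t)=f^{-1}(f(\kappa(t)+t))-t=\kappa(t)$, so $G$ has the correct target $(I,\kappa)\pwf$. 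Since precomposing with $f^{-1}$ and then with $f$ is precomposition with the identity, and all objects involved are supported in $I$ respectively $J$ (so the extensions by zero lose nothing), one obtains $G\circ F=\id$ and $F\circ G=\id$ on the nose. Thus $F$ is an isomorphism of categories, in particular the required equivalence.

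I expect the main obstacle to be not any single computation but the compatibility verification of the middle paragraph: that is the step where Lemma~\ref{lem:pushforward pre} and the push-forward formula genuinely enter, and where one must pass through the barcode decomposition to reason about indecomposable summands. Everything else—functoriality, pointwise finite-dimensionality, additivity, and the inverse—is bookkeeping about supports and precomposition.
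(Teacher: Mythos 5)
Your proposal is correct, but it takes a genuinely different route from the paper. The paper defines $F$ only on indecomposables, setting $F(M_U)=M_{f(U)}$, describes a morphism $M_U\to M_V$ by its scalar $c$ on $V\cap_L U$, sends it to the morphism with scalar $c$ on $f(V)\cap_L f(U)$, and then invokes Lemma~\ref{lem:left lem} together with Lemma~\ref{lem:left intersection lemma} to get bijections on Hom spaces, finishing by ``extending additively'' (density plus full faithfulness on interval modules, which implicitly leans on the Krull--Remak--Schmidt property). You instead build the functor globally as precomposition with $f^{-1}$ (extension by zero outside $J$), which is automatically $\bk$-linear, functorial, exact, and compatible with arbitrary direct sums, and you verify $F(M_U)=M_{f(U)}$ directly; the barcode decomposition enters only in the compatibility check, and the identity $(f^{-1})_*(f_*\kappa)=\kappa$ hands you an explicit inverse functor rather than an abstract quasi-inverse. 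What your approach buys is robustness: it sidesteps the delicate step of extending a functor defined on indecomposables to all objects and all (not necessarily diagonal) morphisms between possibly infinite direct sums, which the paper compresses into one sentence. What the paper's approach buys is the explicit description of $F$ on Hom spaces via left intersections, which is the form reused in later sections. One small caveat in your write-up: objects of $(I,\kappa)\pwf$ are only guaranteed to have \emph{zero-dimensional} (not literally canonical zero) values outside $I$, so ``$G\circ F=\id$ on the nose'' should strictly be a canonical natural isomorphism unless you fix conventions; this does not affect the conclusion that $F$ is an equivalence.
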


\begin{proof}
An interval $U\subseteq [t,K(t)]\subseteq I$ if and only if $f(U)\subseteq [f(t),f(\kappa(t)+t)]=[f(t),f(t)+f_*\kappa(f(t))]\subseteq J$. Hence we construct a functor $F:(I,\kappa)\pwf\to (J, f_*\kappa)\pwf$ which sends interval modules to interval modules: $F(M_U)=M_{f(U)}$.
A nonzero morphism $\varphi: M_U\to M_V$ is determined by a scalar $c$ such that $\varphi(x)=c\cdot 1_\bk:M_U(x)\to M_V(x)$, for $\forall x\in V\cap_L U$.
Thus, we define $F(\varphi):M_{f(U)}\to M_{f(V)}$ as a morphism such that $F(\varphi)(x)=c\cdot 1_\bk$ for all $x\in f(V)\cap_L f(U)$.
Notice that $F$ is a dense functor.

By Lemmas \ref{lem:left lem} and \ref{lem:left intersection lemma}, we have an induced bijection
\begin{align*}
    \Hom_{(I,\kappa)\pwf}(M_U,M_V)\ &\cong \Hom_{(J, f_*\kappa)\pwf}(M_{f(U)},M_{f(V)})\\
    \varphi\ &\mapsto F(\varphi).
\end{align*} as $\bk$-vector spaces and so
$F:\Hom(M_{U},M_{V})\to \Hom(M_{f(U)},M_{f(V)})$, $\varphi\mapsto F(\varphi)$, is an isomorphism.
Since all modules are a direct sum of interval modules, extending additively, $F:(I, \kappa)\pwf\to (J, f_*\kappa)\pwf$ is an equivalence.
\end{proof}

\begin{cor}[to Theorem~\ref{interval equiv}] \label{R equiv 1}
Let $I$ be an interval with  a  pre-Kupisch function $\kappa$.
Then $$(I,\kappa)\pwf\cong \begin{cases} (\RR^{\geq 0},\lambda)\pwf, &I=[a,b) \text{\ or\ } [a,+\infty) \\ (\RR,\mu)\pwf, & I \text{\ is\ open}\end{cases}$$  
for some pre-Kupisch function $\lambda$ on $\RR^{\geq0}$ or $\mu$ on $\RR$.
\end{cor}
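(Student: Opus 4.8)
The plan is to read this off as a direct application of Theorem~\ref{interval equiv}. That theorem says any $f\in\Homeo_+(I,J)$ induces an equivalence $(I,\kappa)\pwf\cong(J,f_*\kappa)\pwf$, and Lemma~\ref{lem:pushforward pre} guarantees that $f_*\kappa$ is again a pre-Kupisch function on the target. So it suffices to exhibit, for each interval $I$ that carries a pre-Kupisch function, an orientation-preserving homeomorphism onto $\RR^{\geq 0}$ (in the half-open case) or onto $\RR$ (in the open case), and then take $\lambda:=f_*\kappa$ or $\mu:=f_*\kappa$.

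First I would pin down the admissible shapes of $I$. Since the existence of $\kappa$ forces $[t,K(t)]\subseteq I$ with $K(t)=t+\kappa(t)>t$ for every $t\in I$, the interval $I$ can have no maximum element; this is exactly the content of Remark~\ref{rmk:pre-Kupisch function}(2), which discards $[a,b]$, $(a,b]$, and $(-\infty,b]$. Hence $I$ is one of $[a,b)$ or $[a,+\infty)$ (the ones with a minimum) or one of the open intervals $(a,b)$, $(a,+\infty)$, $(-\infty,b)$, $\RR$, matching the two cases of the statement.

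Next I would supply explicit increasing bijections. In the half-open case, $t\mapsto t-a$ sends $[a,+\infty)$ onto $\RR^{\geq 0}$, and $t\mapsto (t-a)/(b-t)$ sends $[a,b)$ onto $\RR^{\geq 0}$ (it maps $a$ to $0$ and blows up as $t\to b^-$, with positive derivative $(b-a)/(b-t)^2$). In the open case, the identity handles $\RR$, the map $t\mapsto\log(t-a)$ handles $(a,+\infty)\to\RR$, the map $t\mapsto-\log(b-t)$ handles $(-\infty,b)\to\RR$, and $t\mapsto\log\frac{t-a}{b-t}$ handles $(a,b)\to\RR$. Each map is strictly increasing and surjective, hence by Proposition~\ref{prop:interval bijection} an orientation-preserving homeomorphism. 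Applying Theorem~\ref{interval equiv} to the chosen $f$ then yields the asserted equivalence.

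I do not expect a genuine obstacle: the only points requiring care are the case enumeration for $I$ (making sure the no-maximum constraint from Remark~\ref{rmk:pre-Kupisch function}(2) rules out precisely the intervals with a top element, and nothing more) and the verification that the exhibited maps are indeed increasing bijections of the stated domains and codomains. Everything else is delegated to Theorem~\ref{interval equiv}, Lemma~\ref{lem:pushforward pre}, and Proposition~\ref{prop:interval bijection}.
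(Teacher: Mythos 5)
Your proposal is correct and is exactly the intended argument: the paper offers no separate proof, labeling the statement a corollary to Theorem~\ref{interval equiv}, and the proof is precisely to rule out intervals with a maximum via Remark~\ref{rmk:pre-Kupisch function}(2), exhibit an increasing bijection onto $\RR^{\geq 0}$ or $\RR$, and transport $\kappa$ by push-forward using Lemma~\ref{lem:pushforward pre} and Proposition~\ref{prop:interval bijection}. Your explicit homeomorphisms (e.g.\ $t\mapsto (t-a)/(b-t)$ for $[a,b)$) are of the same kind the paper itself uses elsewhere, such as $f(t)=t/(1-t)$ in Example~\ref{N=Z}.
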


 We warn the reader that categories of the forms $(\RR,\mu)\pwf $ and $(\bR^{\geq 0},\lambda)\pwf$ are possibly equivalent, when they are not connected (Example \ref{N=Z}).

\subsection{Kupisch functions and compatible $\bS^1$-representations }
\begin{defn} \label{defn:Kupisch}{~}
\begin{enumerate}
    \item A pre-Kupisch function $\kappa:\RR\to \RR^{>0}$ on $\RR$ is called a \textbf{Kupisch function} if $\kappa(t+1)=\kappa(t)$ for all $t\in\RR$.
    \item We call a pwf{~}$\bS^1$-representation \textbf{compatible with a Kupisch function} $\kappa$ if all of its indecomposable summands are strings and each of its indecomposable summands $\overline{M}_U$ satisfies $U\subseteq[t,K(t)]=[t,t+\kappa(t)]$ for some $t\in \RR$.
Let $(\bS^1,\kappa)\pwf$ be the full subcategory of $\bS^1\pwf$ consisting representations compatible with $\kappa$.
\end{enumerate}
\end{defn}

\begin{rmk}\label{rmk:S kappa is serre subcategory}
    We remark on two immediate consequences of Definition~\ref{defn:Kupisch}.
    \begin{enumerate}
        \item For any Kupisch function $\kappa$, the category $(\bS^1,\kappa)\pwf$ is a Serre subcategory of $\bS^1\pwf$ and is thus abelian and Krull--Remak--Schmidt.
        Furthermore, $(\bS^1,\kappa)\pwf$ is a Serre subcategory of $\bS^1\spwf$.
        \item For any periodic pre-Kupisch function $\kappa$ with periodicity $r>0$ there is a Kupisch function $\lambda$ such that $(\RR,\kappa)\pwf\cong (\RR,\lambda)\pwf$.
        Define $\lambda=f_*\kappa$ where $f\in\Homeo_+(\RR)$ is given by $f(t)= \frac{t}{r}$ and use Theorem~\ref{interval equiv}.
        If $\kappa$ is a Kupisch function then $\lambda=\kappa$.
    \end{enumerate}
\end{rmk}

One should consider the Kupisch function $\kappa$ as the lift of a function $\bS^1\to \RR^{>0}$, which assigns the length of a projective string module at each point on the circle.

Given a Kupisch function $\kappa$, we construct new Kupisch functions using orientation-preserving homeomorphisms $f:\bS^1\to\bS^1$.

Recall that a homeomorphism $f:\bS^1\to\bS^1$ is called {\bf orientation-preserving} if its lift $\widetilde{f}$ through the universal covering $p: \RR\to \bS^1$, $p(t)=e^{2\pi it}$ is strictly increasing. An orientation preserving homeomorphism has degree $1$, namely $\widetilde{f}(t+1)-\widetilde{f}(t)=1$ for all $t$ (\cite{H}).
Denote by $\Homeo_+(\bS^1)$ the set of all the orientation-preserving homeomorphisms on $\bS^1$.
Notice that an orientation-preserving map need not be a simple rotation,
for example consider $f:\RR/\mathbb Z\to \RR/\mathbb Z$ given by $f(x) = x + b\sin(2\pi x)$ for any $0<b<\frac{1}{2\pi}$.

\begin{defn}\label{def:circle push-forward}
Let $f\in\Homeo_+(\bS^1)$ and choose a lift $\widetilde{f}$ of $f$. For any Kupisch function $\kappa$ we define the push-forward by $$(f_*\kappa)(t):= (\tilde f_*\kappa)(t)=\widetilde f(\kappa\circ\widetilde f^{-1}(t)+\widetilde f^{-1}(t))-t.$$
\end{defn}

Straightforward computations show the right side is independent of the choice of $\widetilde f$ and it is again a Kupisch function.
We have the following Lemma that follows from Lemma~\ref{lem:pushforward pre}.

\begin{lem}\label{lem:pushfoward K}
    If $\kappa$ is a Kupisch function and $f\in\Homeo_+(\bS^1)$ then $f_*\kappa$ is a Kupisch function.
    Furthermore, $f$ defines a bijection on the set of Kupisch functions.
\end{lem}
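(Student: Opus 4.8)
The plan is to reduce everything to the interval statement Lemma~\ref{lem:pushforward pre} by passing to a lift. Fix $f\in\Homeo_+(\bS^1)$ and a lift $\widetilde f$. By definition an orientation-preserving circle homeomorphism has a strictly increasing lift, so $\widetilde f$ is an increasing bijection $\RR\to\RR$; that is, $\widetilde f\in\Homeo_+(\RR)$. Comparing Definitions~\ref{def:push-forward}~and~\ref{def:circle push-forward}, the circle push-forward is literally the interval push-forward of $\widetilde f$, namely $f_*\kappa=\widetilde f_*\kappa$. Since $\kappa$ is in particular a pre-Kupisch function on $\RR$, Lemma~\ref{lem:pushforward pre} immediately gives that $f_*\kappa$ is a pre-Kupisch function on $\RR$. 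It therefore remains only to check periodicity and the bijection claim.

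For periodicity I would use the degree-$1$ property $\widetilde f(s+1)=\widetilde f(s)+1$, which also yields $\widetilde f^{-1}(t+1)=\widetilde f^{-1}(t)+1$, together with $\kappa(s+1)=\kappa(s)$. Writing $s=\widetilde f^{-1}(t)$, a direct substitution gives
\begin{align*}
(f_*\kappa)(t+1)
&= \widetilde f\big(\kappa(\widetilde f^{-1}(t)+1)+\widetilde f^{-1}(t)+1\big)-(t+1)\\
&= \widetilde f\big(\kappa(s)+s\big)+1-(t+1)
= (f_*\kappa)(t),
\end{align*}
so $f_*\kappa(t+1)=f_*\kappa(t)$ and hence $f_*\kappa$ is a Kupisch function. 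The same computation, with an arbitrary $n\in\ZZ$ in place of $1$, shows that the value $f_*\kappa(t)$ is unchanged when $\widetilde f$ is replaced by another lift $\widetilde f+n$, which also yields the independence of lift asserted after Definition~\ref{def:circle push-forward}.

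For the bijection statement, note that $f^{-1}\in\Homeo_+(\bS^1)$ with lift $\widetilde f^{-1}\in\Homeo_+(\RR)$, so by the previous paragraph $(f^{-1})_*=(\widetilde f^{-1})_*$ likewise sends Kupisch functions to Kupisch functions. Applying Lemma~\ref{lem:pushforward pre} to $\widetilde f\in\Homeo_+(\RR)$, the maps $\widetilde f_*$ and $(\widetilde f^{-1})_*=(\widetilde f_*)^{-1}$ are mutually inverse bijections on the set of pre-Kupisch functions on $\RR$. Since both restrict to the subset of $1$-periodic pre-Kupisch functions, i.e.\ the Kupisch functions, the map $f_*=\widetilde f_*$ restricts to a bijection on the set of Kupisch functions.

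The only genuine computational content is the periodicity identity, and the single delicate point is that the degree-$1$ normalization of the lift is exactly what matches the period of $\kappa$; every other assertion is a formal consequence of Lemma~\ref{lem:pushforward pre} together with the fact that the lift of an element of $\Homeo_+(\bS^1)$ lies in $\Homeo_+(\RR)$.
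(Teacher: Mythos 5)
Your proposal is correct and takes essentially the same approach as the paper: the paper also deduces this lemma from Lemma~\ref{lem:pushforward pre}, remarking only that ``straightforward computations'' establish the $1$-periodicity of $f_*\kappa$ and the independence of the choice of lift $\widetilde f$. Your write-up simply carries out those computations explicitly (using the degree-$1$ identity $\widetilde f(t+1)=\widetilde f(t)+1$), which is exactly the content the paper leaves to the reader.
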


\begin{construction}\label{construction:push-down}
Let $\kappa$ and $\lambda$ be Kupisch functions.
Consider the subcategory $(\RR,\kappa)\bpwf$ and $(\RR,\lambda)\bpwf$ of $(\RR,\kappa)\pwf$ and $(\RR,\lambda)\pwf$, respectively.
Recall from Remark~\ref{rmk:pre-Kupisch function}(1) that these are abelian, Krull--Remak--Schmidt subcategories.

Recall that given an $\RR$-module $M$, the {\bf translated module} $M^{(i)}$ (under the $\mathbb Z$ action) is an $\RR$-representation such that $M^{(i)}(x)=M(x-i)$. Hence for an interval module $M_U$, its translated module $M^{(i)}_U=M_{U+i}$. Since a Kupisch function $\kappa$ is $1$-periodic, the translation $(-)^{(i)}:(\RR,\kappa)\bpwf\to (\RR,\kappa)\bpwf$ is an isomorphism with inverse $(-)^{(-i)}$. 
Let $F:(\RR,\kappa)\bpwf\to (\RR,\lambda)\bpwf$ be an additive covariant functor which preserves indecomposable objects and satisfies $F\circ (-)^{(1)}=(-)^{(1)}\circ F$.

We now define a functor $\overline{F}: (\bS^1,\kappa)\pwf \to (\bS^1,\lambda)\pwf$.
Recall Remark~\ref{rmk:S kappa is serre subcategory}(1) that these are abelian, Krull--Remak--Schmidt subcategories.
On indecomposables, define $\overline{F}(\overline{M}_U) := \overline{F(M_U)}$.
This is well-defined since $F(M^{(1)}_U)=(FM_U)^{(1)}$.

Given a homomorphism $f\in\Hom_{\bS^1,\kappa}(\overline M_U,\overline M_V)$, we identify $f$ as $(f_i:M_U\to M_{V+i})_{i\in\mathbb Z}$ due to Lemma~\ref{lem:n left lem}. Then $(F(f_i):FM_U\to F(M_{V}^{(i)})=(FM_V)^{(i)})_{i\in\mathbb Z}$ defines a homomorphism $\overline F(f)\in\Hom_{\bS^1,\lambda}(\overline {FM_U},\overline {FM_V})$.

To show $\overline F$ is a well-defined functor, it remains to show that $\overline F$ respects compositions. Let $f=(f_i):\overline M_U\to \overline M_V$ amd $g=(g_j): \overline M_V\to \overline M_W$ be morphisms in $(\bS^1,\kappa)\pwf$. Then the composition $g\circ f=(h_k):\overline M_U\to \overline M_V$ is given by $h_k=\sum_{d\in\mathbb Z}g_{k-d}^{(d)}f_d:M_U\to M_{W+k}$, where $g^{(d)}_j:M^{(d)}_V\to M^{(d)}_{W+j}$ are the translated morphisms.

Now $\overline F(g\circ f)= (F(h_k))_{k\in\mathbb Z} $, where the $k$-th component $F(h_k)=F(\sum_{d\in\mathbb Z}g_{k-d}^{(d)}f_d)=\sum_{d\in\mathbb Z}F(g_{k-d}^{(d)})\circ F(f_d)=\sum_{d\in\mathbb Z}F(g_{k-d})^{(d)}\circ F(f_d)$. On the other hand, since $\overline F(f)=(F(f_i))_{i\in\mathbb Z}$ and $\overline F(g)=(F(g_j))_{j\in\mathbb Z}$, the $k$-th component of $\overline F(g)\circ \overline F(f)$ is given by $\sum_{d\in\mathbb Z}F(g_{k-d})^{(d)}\circ F(f_d)$. Hence $\overline F(g\circ f)=\overline F(g)\circ \overline F(f)$.
\end{construction}

\begin{lem}\label{lem:push-down of equivalence}
    Let $\kappa$ and $\lambda$ be Kupisch functions. Assume  $F:(\RR,\kappa)\bpwf\to (\RR,\lambda)\bpwf$ preserves indecomposable objects and satisfies $F\circ (-)^{(1)}=(-)^{(1)}\circ F $.
    Let $\overline{F}$ be as in Construction~\ref{construction:push-down}. Then
    the following diagram commutes:
    \begin{displaymath}
    \xymatrix@C=8ex{
        (\RR,\kappa)\bpwf \ar[r]^-F \ar[d]_-{p_\bullet} & (\RR, \lambda)\bpwf \ar[d]^-{p_\bullet} \\
        (\bS^1,\kappa)\pwf \ar[r]_-{\overline{F}} & (\bS^1, \lambda)\pwf,
    }
    \end{displaymath}
where $p_\bullet$ denotes the push-down. Furthermore, if $F$ is an equivalence then so is $\overline{F}$.
\end{lem}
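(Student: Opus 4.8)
The plan is to establish commutativity of the square first on objects and then on morphisms, and afterwards to deduce the equivalence statement. On objects the verification is immediate: since $F$ preserves indecomposables and all four functors are additive, it suffices to treat an interval module $M_U$. By definition of the push-down we have $p_\bullet M_U=\overline M_U$, while Construction~\ref{construction:push-down} gives $\overline F(\overline M_U)=\overline{F(M_U)}=p_\bullet(F(M_U))$. Hence $\overline F\circ p_\bullet$ and $p_\bullet\circ F$ agree on $M_U$, and by additivity they agree on every object.

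The heart of the argument, and the step I expect to be most delicate, is commutativity on morphisms. Here I would work through the identification $\Hom_{\bS^1}(\overline M_U,\overline M_V)\cong\bigoplus_{i\in\ZZ}\Hom_\RR(M_U,M_{V+i})$ of Lemma~\ref{lem:n left lem}. The key fact to nail down is that for a morphism $\varphi\colon M_U\to M_V$ the push-down $p_\bullet\varphi$ corresponds under this isomorphism to the element concentrated in degree $i=0$ with value $\varphi$ (all other components zero). This can be seen either from the explicit pointwise formula for the push-down of a morphism, which sends each lift-summand to the summand at the same lift, or from the adjunction itself: the unit $M_V\to p^\bullet p_\bullet M_V=\bigoplus_i M_{V+i}$ is the inclusion of the $i=0$ summand, so by naturality $p_\bullet\varphi$ maps to $\varphi$ in degree $0$. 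Granting this, the componentwise definition of $\overline F$ in Construction~\ref{construction:push-down} yields that $\overline F(p_\bullet\varphi)$ has $i=0$ component $F(\varphi)$ and all others zero, which is precisely $p_\bullet(F\varphi)$. Since every morphism is a matrix of such $\varphi$ between indecomposable summands, additivity finishes the commutativity.

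Finally, assume $F$ is an equivalence. Essential surjectivity of $\overline F$ follows formally from the commuting square: the right-hand $p_\bullet\colon(\RR,\lambda)\bpwf\to(\bS^1,\lambda)\pwf$ is essentially surjective, since by Theorem~\ref{decom s1} every object of $(\bS^1,\lambda)\pwf$ is a push-down of some object of $(\RR,\lambda)\bpwf$, and $F$ is essentially surjective; hence $\overline F\circ p_\bullet=p_\bullet\circ F$ is essentially surjective, and since its image lies in the essential image of $\overline F$, so is $\overline F$. For full faithfulness I would again invoke Lemma~\ref{lem:n left lem}: on string modules $\overline F$ acts as
\[ \overline F\colon\bigoplus_{i\in\ZZ}\Hom_\RR(M_U,M_{V+i})\longrightarrow\bigoplus_{i\in\ZZ}\Hom_\RR(FM_U,F(M_{V+i})),\]
namely as $\bigoplus_i F$, and each summand map is bijective because $F$ is fully faithful; the hypothesis $F\circ(-)^{(1)}=(-)^{(1)}\circ F$ ensures $F(M_{V+i})=F(M_V)^{(i)}$, so the target is exactly the Lemma~\ref{lem:n left lem} decomposition of $\Hom_{\bS^1}(\overline F\overline M_U,\overline F\overline M_V)$. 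Thus $\overline F$ is bijective on Hom-spaces between indecomposables, and extending by additivity and the Krull--Remak--Schmidt property it is fully faithful; being also essentially surjective, $\overline F$ is an equivalence. The only point requiring a little extra care is the passage from indecomposables to arbitrary (possibly infinite) direct sums, which is handled by the pointwise finiteness of the representations involved.
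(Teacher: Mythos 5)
Your proposal is correct and follows essentially the same route as the paper's proof: commutativity is checked on interval modules and then on morphisms by observing that $p_\bullet\varphi$ is concentrated in degree $i=0$ under the Lemma~\ref{lem:n left lem} decomposition, and full faithfulness of $\overline F$ comes from $F$ acting componentwise as an isomorphism on $\bigoplus_{i\in\ZZ}\Hom_\RR(M_U,M_{V+i})$. The only difference is that you spell out essential surjectivity of $\overline F$ (via essential surjectivity of $p_\bullet$ and $F$) and the passage to infinite direct sums, points the paper leaves implicit.
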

\begin{proof}
    
    First, by Construction~\ref{construction:push-down}, for interval modules $M_U\in  (\RR,\kappa)\bpwf$, we have $\overline F p_\bullet (M_U)=\overline F(\overline M_U)=\overline {F(M_U)}=p_\bullet F(M_U)$. Second, let $f:M_U\to M_V$ be a morphism of indecomposables in $(\RR,\kappa)\pwf$. Then $p_\bullet f=(f_i)_{i\in\mathbb Z}:\overline M_U\to \overline M_V$, where $f_0=f$ and $f_i=0$ for $i\neq 0$. Hence by Construction~\ref{construction:push-down}, $\overline F (p_\bullet f)=(F(f_i))$, where $F(f_0)=F(f)$ and $F(f_i)=0$ for $i\neq 0$, which coincides with $p_\bullet F(f)$. So
    the diagram commutes.
    
    Now assume $F$ is an equivalence.
    Then \[F: \bigoplus_{i\in\ZZ}\Hom_{(\RR,\kappa)\bpwf}(M_U, M_{V+i}) \to \bigoplus_{i\in\ZZ}\Hom_{(\RR,\lambda)\bpwf}(FM_U, FM_{V+i})\] is an isomorphism.
    So, \[\overline{F}:\Hom_{(\bS^1,\kappa)}(\overline{M}_U,\overline{M}_V)\to \Hom_{(\bS^1,\lambda)}(\overline{F}(\overline{M}_U),\overline{F}(\overline{M}_V))\] is also an isomorhism.
\end{proof}

\begin{thm}\label{iso1}
Let $\kappa$   be a Kupisch functions and $f\in\Homeo_+(\bS^1)$. Then $(\bS^1,\kappa)\pwf\cong(\bS^1,f_*\kappa)\pwf$.\\
\end{thm}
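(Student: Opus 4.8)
The plan is to lift $f$ to the real line, apply the interval equivalence of Theorem~\ref{interval equiv} on $\RR$, and then push the resulting equivalence down to $\bS^1$ via Lemma~\ref{lem:push-down of equivalence}. The conceptual point is that orientation-preserving circle homeomorphisms are exactly the maps whose lifts commute with the $\ZZ$-action, so that the real equivalence descends to the orbit category.

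First I would fix a lift $\widetilde{f}\in\Homeo_+(\RR)$ of $f$ through the covering $p$. Because $f$ is orientation-preserving, $\widetilde{f}$ is strictly increasing and has degree $1$, i.e. $\widetilde{f}(t+1)=\widetilde{f}(t)+1$ for all $t$. By Definition~\ref{def:circle push-forward} the function $f_*\kappa$ equals $\widetilde{f}_*\kappa$, and by Lemma~\ref{lem:pushfoward K} it is again a Kupisch function. Applying Theorem~\ref{interval equiv} to $\widetilde{f}:\RR\to\RR$ then yields an equivalence
\[
    F:(\RR,\kappa)\pwf\to(\RR,f_*\kappa)\pwf,\qquad F(M_U)=M_{\widetilde{f}(U)}.
\]

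Next I would verify that $F$ restricts to an equivalence $(\RR,\kappa)\bpwf\to(\RR,f_*\kappa)\bpwf$ meeting the hypotheses of Lemma~\ref{lem:push-down of equivalence}. Since $\widetilde{f}$ is a homeomorphism of $\RR$, it carries bounded intervals to bounded intervals (and $\widetilde{f}^{-1}$ likewise), so an object with bounded support is sent to one with bounded support; hence $F$ preserves $\bpwf$ in both directions. It preserves indecomposables because $M_U\mapsto M_{\widetilde{f}(U)}$ sends interval modules to interval modules. The essential check is commutation with the shift $(-)^{(1)}$, which sends $M_U$ to $M_{U+1}$: the degree-$1$ identity $\widetilde{f}(t+1)=\widetilde{f}(t)+1$ gives $\widetilde{f}(U+1)=\widetilde{f}(U)+1$, so
\[
    F\big(M_U^{(1)}\big)=F(M_{U+1})=M_{\widetilde{f}(U+1)}=M_{\widetilde{f}(U)+1}=\big(F M_U\big)^{(1)}.
\]
Extending additively, $F\circ(-)^{(1)}=(-)^{(1)}\circ F$ on all of $(\RR,\kappa)\bpwf$. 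Lemma~\ref{lem:push-down of equivalence} then produces the push-down $\overline{F}:(\bS^1,\kappa)\pwf\to(\bS^1,f_*\kappa)\pwf$ of Construction~\ref{construction:push-down}, and since $F$ is an equivalence so is $\overline{F}$, giving $(\bS^1,\kappa)\pwf\cong(\bS^1,f_*\kappa)\pwf$.

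I expect the only real subtlety to be the shift-compatibility step: it is precisely the degree-$1$ property of orientation-preserving circle homeomorphisms that makes $\widetilde{f}$ commute with the $\ZZ$-action and hence descend to $\bS^1$; without it the functor $\overline{F}$ would not even be well defined. Everything else—density, full faithfulness, and preservation of bounded support—is inherited directly from Theorem~\ref{interval equiv} and the formal properties of the push-down recorded in Construction~\ref{construction:push-down} and Lemma~\ref{lem:push-down of equivalence}.
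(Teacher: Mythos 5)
Your proposal is correct and follows essentially the same route as the paper: lift $f$ to $\widetilde f\in\Homeo_+(\RR)$, invoke Theorem~\ref{interval equiv} to get an equivalence on the line, check preservation of indecomposables and the shift-commutation $F\circ(-)^{(1)}=(-)^{(1)}\circ F$ via the degree-$1$ identity, and descend through Lemma~\ref{lem:push-down of equivalence}. Your explicit verification that the equivalence restricts to the bounded subcategories $\bpwf$ (needed to apply that lemma) is a detail the paper passes over silently, so nothing is missing.
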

\begin{proof}
Since $\widetilde f:\RR\to\RR$ is an orientation preserving homeomorphism, it induces an equivalence $\widetilde F:(\RR,\kappa)\pwf\to(\RR,\tilde{f}_*\kappa)\pwf$, according to Theorem \ref{interval equiv}. It is clear $\widetilde F$ preserves indecomposables and $\widetilde F\circ (-)^{(1)}=(-)^{(1)}\circ \widetilde F$ because $\widetilde f$ has degree $1$.
Hence, by Lemma~\ref{lem:push-down of equivalence}, there is an induced equivalence $F:(\bS^1,\kappa)\pwf\to (\bS^1,f_*\kappa)\pwf$.
\end{proof}

\begin{exm}\label{s1 pushforward_exm}

(1) For any constant $c$, the shifted Kupisch function $\kappa(t-c)$ is a pushforward $f_*\kappa(t)$ via the circle map $f(t)=te^{2\pi ci}$. Therefore, $(\bS^1,\kappa(t-c))\pwf\cong(\bS^1,\kappa)\pwf$.

(2) If $\kappa$ and $\lambda$ are constant Kupisch functions with distinct values, then there is no orientation preserving homeomorphism $f$ on $\RR$ (or $\bS^1$) such that $f_*\kappa=\lambda$. It follows from Theorem~\ref{S1 equiv 2} that, in this case, $(\bS^1,\kappa)\pwf\not\cong(\bS^1,\lambda)\pwf$.

(3) If $\kappa=\frac{1}{8}\sin(2\pi t)+\frac{1}{2}$, $\lambda=\frac{1}{2}$, then $(\bS^1,\kappa)\pwf\not\cong(\bS^1,\lambda)\pwf$, since $(\bS^1,\kappa)\pwf$ contains non-period modules of infinite projective dimension whereas  $(\bS^1,\lambda)\pwf$ does not.   

 \end{exm}

\section{Separation Points and Conntectedness}  
In this section, we discuss the connectedness of $(\RR,\kappa)\pwf$ and of $(\bS^1,\kappa)\pwf$. We prove that the orthogonal components (Definition \ref{defn:connect}) of these categories are determined by the existence of certain discontinuities of the (pre-)Kupisch functions, which we call separation points (Definition~\ref{def:separation pt}). 

 \subsection{Separation Points}
Assume a real valued function $f(x)$ is discontinuous at $x_0$. Recall that $x_0$ is called a \textbf{discontinuity of first kind} if both $\lim\limits_{x\to x_0^-}f(x)$ and $\lim\limits_{x\to x_0^+}f(x)$ exist but are not equal.

We need the following well-known result for monotone functions from analysis, which is usually referred to as the Darboux--Froda Theorem \cite{R}.

  \begin{thm}
  A real valued monotone function defined on an interval has at most countably many discontinuities and each discontinuity must be of the first kind.
  \end{thm}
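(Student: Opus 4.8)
The plan is to reduce to the increasing case and then exploit two features of monotonicity: that one-sided limits are forced to exist as suprema and infima, and that the associated ``jumps'' carve out pairwise disjoint open intervals in the range. Throughout I would write $f$ for the given monotone function on an interval $I$; replacing $f$ by $-f$ if necessary, I may assume $f$ is nondecreasing, since negation converts a decreasing function into an increasing one and preserves both the location and the type of every discontinuity.

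First I would establish that every discontinuity is of the first kind. Fix $x_0 \in I$ that is not the left endpoint. The set $\{f(x) : x \in I,\, x < x_0\}$ is nonempty and bounded above by $f(x_0)$, so its supremum exists; a routine $\e$-argument using monotonicity shows this supremum equals $\lim_{x \to x_0^-} f(x)$. Symmetrically, $\inf\{f(x) : x \in I,\, x > x_0\} = \lim_{x \to x_0^+} f(x)$ whenever $x_0$ is not the right endpoint. Hence at any interior point both one-sided limits exist and satisfy $f(x_0^-) \le f(x_0) \le f(x_0^+)$, so a point of discontinuity can only be a jump, i.e.\ of the first kind. At an endpoint only the relevant one-sided limit is required, and the same supremum/infimum argument applies.

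Next I would prove countability. Let $D \subseteq I$ be the set of discontinuities. For each $x_0 \in D$ we have $f(x_0^-) < f(x_0^+)$ (interpreting the missing one-sided limit as $f(x_0)$ itself at an endpoint), so the open interval $J_{x_0} := (f(x_0^-), f(x_0^+))$ is nonempty. The key observation is that these intervals are pairwise disjoint: if $x_1 < x_2$ are both in $D$, then choosing any $y$ with $x_1 < y < x_2$ and using monotonicity gives $f(x_1^+) \le f(y) \le f(x_2^-)$, so $J_{x_1}$ lies entirely to the left of $J_{x_2}$. By density of $\QQ$ in $\RR$, I may select a rational number $q_{x_0} \in J_{x_0}$ for each $x_0 \in D$, and disjointness makes $x_0 \mapsto q_{x_0}$ injective. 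Thus $D$ injects into $\QQ$ and is at most countable.

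The only delicate point, and the step I would treat most carefully, is the bookkeeping at the endpoints of $I$: there one-sided limits may be undefined, and one must phrase the disjointness argument so that the degenerate ``half-open'' jump intervals at an endpoint are still handled uniformly. This is a matter of case distinction rather than a genuine difficulty, so no essential obstacle arises beyond writing the endpoint cases cleanly.
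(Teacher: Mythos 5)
Your proof is correct. The paper gives no proof of this statement at all: it is quoted as the classical Darboux--Froda theorem with a citation to Rudin's \emph{Principles of Mathematical Analysis}, and your argument (reduction to the nondecreasing case, identification of the one-sided limits as a supremum and infimum so that every discontinuity is a jump, then injecting the set of discontinuities into $\QQ$ by choosing a rational in each of the pairwise disjoint jump intervals) is exactly the standard proof found in that reference, so there is nothing to compare beyond noting the agreement.
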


    Let $\kappa(t)$ be a pre-Kupisch function. Then $K(t)=\kappa(t)+t$ is an increasing function and $K(t)>t$. So $K(t)$ and hence $\kappa(t)$ has at most countably many discontinuities, all of the first kind. In particular, $\lim\limits_{t\to a^-}\kappa(t)$ and $\lim\limits_{t\to a^+}\kappa(t)$ exist for all real number $a$.
  
 We show that the category $(\RR,\kappa)\pwf$ decomposes into two orthogonal components when there is a sequence $\{K^n(t)\}_{n=1}^\infty$ such that $\displaystyle\lim_{n\to\infty} K^n(t)<\infty$. We characterize such limit points as separation points:
  
 \begin{defn}\label{def:separation pt}
Let $\kappa$ be a pre-Kupisch function on $I$. An interior point $c$ in $I\setminus\partial I$ is a {\bf separation point} if 
 \begin{enumerate}
 \item $\lim\limits_{t\to c^-}\kappa(t)=0$; \\ 
  \item $K(t)<c$, for $t\in I, t<c$.
 \end{enumerate}
 \end{defn}

Notice $\kappa$ may have discontinuities that are not separation points.
For example,
\begin{displaymath}
    \kappa(t) = \begin{cases}
    1 & t<0 \\
    2 & t\geq 0
    \end{cases}
\end{displaymath}
is a pre-Kupisch function with one discontinuity at 0 but 0 is not a separation point.

The following proposition follows from Definition~\ref{def:separation pt}.
\begin{prop}\label{k prop}
   Let $\kappa$ be a pre-Kupisch function on $I$.
  \begin{enumerate}

  \item If   $\lim\limits_{t\to c^-}\kappa(t)=0$, then $K(t)\leq c$ for $\forall t<c$. Furthermore, if the equality holds for some $t_0<c$, then $K(t)=c$ for $\forall t\in[t_0,c)$.
 \item If $c\in(t,K(t))$, for some  $t\in I$, then $\lim\limits_{t\to c^-}\kappa(t)\neq 0$.
  \item For any $t\in I$, there is no separation points in $(t,K(t)]$.
\end{enumerate}
\end{prop}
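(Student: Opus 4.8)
The plan is to prove the three items in the order (1), (2), (3), since each feeds the next. Everything rests on two facts already in hand: $K(t)=\kappa(t)+t$ is increasing, and (by Darboux--Froda, as just noted) the one-sided limits of $\kappa$, hence of $K$, exist everywhere. The only genuinely load-bearing analytic fact is that for an increasing function the left-hand limit at a point equals the supremum of its values to the left; this is precisely what converts a hypothesis about $\lim_{s\to c^-}$ into a pointwise bound on $K$.

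For (1), I would first observe that $\lim_{s\to c^-}K(s)=\lim_{s\to c^-}\kappa(s)+c=0+c=c$. Because $K$ is increasing, this left limit coincides with $\sup_{s<c}K(s)$, so every value $K(t)$ with $t<c$ (and $t\in I$) satisfies $K(t)\leq c$, which is the first claim. For the ``furthermore'', suppose $K(t_0)=c$ for some $t_0<c$. Then for $t\in[t_0,c)$ monotonicity gives $c=K(t_0)\leq K(t)$, while the first claim gives $K(t)\leq c$; the two inequalities force $K(t)=c$ on all of $[t_0,c)$.

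Statement (2) is then the contrapositive of the first half of (1). If $c\in(t,K(t))$ for some $t\in I$, then in particular $t<c<K(t)$. Were $\lim_{s\to c^-}\kappa(s)=0$, part (1) applied at $c$ would yield $K(t)\leq c$, contradicting $c<K(t)$; hence $\lim_{s\to c^-}\kappa(s)\neq 0$. Finally, for (3) I would argue directly from Definition~\ref{def:separation pt}: suppose some separation point $c$ lies in $(t,K(t)]$ for a fixed $t\in I$. Then $t<c$, so condition (2) in the definition of a separation point gives $K(t)<c$, whereas membership $c\in(t,K(t)]$ gives $c\leq K(t)$, a contradiction. This single inequality handles both the interior case $c<K(t)$ (also covered by item (2) above) and the endpoint case $c=K(t)$ at once.

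I do not anticipate a serious obstacle: each argument is a few lines once the monotone-limit identity $\lim_{s\to c^-}K(s)=\sup_{s<c}K(s)$ is invoked. The only point needing a little care is bookkeeping on the quantifiers—ensuring that every $t$ appearing ranges over $I$ and that the strict inequality $t<c$ is genuinely available in each application, which it is since $K(t)>t$ for all $t$.
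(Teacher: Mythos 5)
Your proof is correct, and it fills in exactly the argument the paper leaves implicit: the paper offers no written proof, stating only that the proposition ``follows from Definition~\ref{def:separation pt}'' (together with the preceding Darboux--Froda discussion guaranteeing one-sided limits exist). Your use of the monotone-limit identity $\lim_{s\to c^-}K(s)=\sup_{s<c}K(s)$ for the increasing function $K$, the contrapositive for (2), and the direct contradiction with condition (2) of the separation-point definition for (3) is precisely the intended elementary route.
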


   Denote by $\cS(\kappa)$ the set of separation points. Since $\kappa(t)$ is discontinuous at any separation point, the cardinality of $\cS(\kappa)$ is at most countable.

  If $\kappa(t)$ is a pre-Kupisch function on $I$, then $K(t):I\to I$ is an increasing function. The value of $K(t)$ represents the socle of the indecomposable projective module at $t$. For any point $t$ and $n\geq 0$, there is an inequality $K^{n+1}(t)=\kappa(K^{n}(t))+K^{n}(t)>K^{n}(t)$. Hence  $\{K^n(t)\}_{n=0}^\infty$ is a strictly increasing sequence.

\begin{lem}\label{s prop}
Let $\kappa$ be a pre-Kupisch function on $I$. If $\lim\limits_{n\to\infty} K^n(a)=c\in I$, then $c$ is the minimum separation point which is greater than $a$.
 \end{lem}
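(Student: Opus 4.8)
The plan is to verify that $c$ satisfies both defining conditions of a separation point (Definition~\ref{def:separation pt}), then to check that $c$ is interior, and finally to establish its minimality among separation points exceeding $a$. Throughout I would use the observation recorded just before the lemma that the orbit $\{K^n(a)\}_{n\geq 0}$ is strictly increasing (since $K(t)>t$); combined with $\lim_n K^n(a)=c$ this forces $K^n(a)\uparrow c$ with $K^n(a)<c$ for every $n$, because a strictly increasing sequence converging to $c$ can never attain $c$.

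First I would establish condition (1), that $\lim_{t\to c^-}\kappa(t)=0$. Since $K$ is monotone increasing on $I$, its left limit $K(c^-)=\lim_{t\to c^-}K(t)=\sup_{t<c}K(t)$ exists. As $K^n(a)\uparrow c$ from below, monotonicity gives $K(K^n(a))\to K(c^-)$; but $K(K^n(a))=K^{n+1}(a)\to c$, so $K(c^-)=c$. Therefore $\lim_{t\to c^-}\kappa(t)=\lim_{t\to c^-}(K(t)-t)=c-c=0$, which is condition (1).

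Next I would establish condition (2), that $K(t)<c$ for all $t\in I$ with $t<c$. By condition (1) just proved, Proposition~\ref{k prop}(1) already yields $K(t)\leq c$ for all such $t$, so it remains only to exclude equality. Suppose $K(t_0)=c$ for some $t_0<c$; then Proposition~\ref{k prop}(1) forces $K(t)=c$ for all $t\in[t_0,c)$. But since $K^n(a)\uparrow c$ there is $N$ with $t_0\leq K^N(a)<c$, whence $K^{N+1}(a)=K(K^N(a))=c$, contradicting $K^{N+1}(a)<c$. Thus $K(t)<c$ throughout. It then remains to see that $c$ is interior: each $[K^n(a),K^{n+1}(a)]\subseteq I$, so their union $[a,c)\subseteq I$, while $[c,c+\kappa(c)]=[c,K(c)]\subseteq I$ with $\kappa(c)>0$; as $a<c<c+\kappa(c)$, the point $c$ lies in $I\setminus\partial I$. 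Hence $c\in\cS(\kappa)$. For minimality, I would suppose toward a contradiction that there is a separation point $c'$ with $a<c'<c$. Condition (2) for $c'$ says $K(t)<c'$ whenever $t\in I$ and $t<c'$; starting from $K^0(a)=a<c'$, an immediate induction gives $K^n(a)<c'$ for all $n$, so $c=\lim_n K^n(a)\leq c'<c$, which is absurd. Thus no separation point lies strictly between $a$ and $c$, and $c$ is the minimum separation point greater than $a$.

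I expect the only genuine subtlety to be excluding the equality case $K(t_0)=c$ in condition (2). The resolution is precisely the interaction between Proposition~\ref{k prop}(1) — which would make $K$ identically $c$ on a left half-neighborhood of $c$ — and the fact that the orbit $\{K^n(a)\}$ is \emph{strictly} increasing yet eventually enters that half-neighborhood $[t_0,c)$, forcing the next iterate to jump exactly to $c$ and contradicting strict monotonicity below $c$. Every other step is a routine application of the monotonicity of $K$ and the already established left-limit facts for pre-Kupisch functions.
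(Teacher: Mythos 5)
Your proof is correct and follows the same overall strategy as the paper's: use the strictly increasing orbit $K^n(a)\uparrow c$ (with $K^n(a)<c$ for every $n$) to verify both conditions of Definition~\ref{def:separation pt}, then establish minimality. The differences are in the details. For condition (2), the paper argues directly: given $t<c$, pick $n$ with $t<K^n(a)<c$; then $K(t)\leq K(K^n(a))=K^{n+1}(a)<c$ because the orbit strictly increases to $c$. Your detour through Proposition~\ref{k prop}(1), obtaining $K(t)\leq c$ and then excluding equality via the rigidity clause that would force $K\equiv c$ on $[t_0,c)$, is sound but longer than necessary, since the strict bound is already available from the orbit itself. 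For minimality, the paper invokes Proposition~\ref{k prop}(3): no separation point lies in any interval $(K^n(a),K^{n+1}(a)]$, and $(a,c)=\bigcup_{n\geq 0}(K^n(a),K^{n+1}(a)]$, so none lies in $(a,c)$. Your induction, showing $K^n(a)<c'$ for all $n$ and hence $c\leq c'$ for any hypothetical separation point $c'$ with $a<c'<c$, inlines essentially the same reasoning using only condition (2) of the definition; both are equally valid, and yours has the merit of being self-contained. Finally, you verify that $c$ is an interior point of $I$, a requirement of Definition~\ref{def:separation pt} that the paper's proof passes over silently; that is a worthwhile addition.
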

  \begin{proof}
First we show that $c$ is a separation point. Since  $\lim\limits_{x\to c^-}\kappa(t)$ always exists, it can be computed by 
   $\lim\limits_{t\to c^-}\kappa(t)= \lim\limits_{n\to \infty}\kappa(K^n(a))=\lim\limits_{n\to \infty} K^{n+1}(a)-K^n(a)=0.$ On the other hand, for any $t<c$, there is some $n>0$ such that $t<K^n(a)<c$. Therefore, $K(t)\leq K^{n+1}(a)< \lim\limits_{n\to \infty}K^{n+1}(a)=c$ because $K^n(a)$ strictly increase. So $K(t)<c$ for $t<c$ and hence $c$ is a separation point.
   
Next,  according to Proposition \ref{k prop} (3), there is no separation point in intervals $(K^n(a),K^{n+1}(a)]$. So there is no separation point in the interval $(a,c)=\bigcup\limits_{n=0}^\infty(K^n(a),K^{n+1}(a)]$.
   \end{proof}

\begin{prop}\label{s cor}
 If $\cS(\kappa)=\emptyset$, then for all $t\in I$, $\lim\limits_{n\to \infty}K^n(t)=\begin{cases}\sup(I), &\sup(I)<+\infty\\ +\infty, & \text{otherwise}. \end{cases}$
  \end{prop}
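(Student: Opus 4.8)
The plan is to funnel both cases of the dichotomy through the single identity $L=\sup(I)$, where $L:=\lim_{n\to\infty}K^n(t)$. First I would collect the elementary facts about the forward orbit of $t$ under $K$. Because $\kappa>0$ we have $K(s)>s$ for every $s$, so the sequence $\{K^n(t)\}_{n\ge 0}$ is strictly increasing, as already noted just before the statement. Because $K(s)\in[s,K(s)]\subseteq I$ for each $s\in I$, a one-line induction gives $K^n(t)\in I$ for all $n$. Hence the monotone limit $L$ exists as an element of $(t,+\infty]$ and, since every term lies in $I$, we have $L\le\sup(I)$.

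The heart of the argument is then to upgrade this inequality to $L=\sup(I)$, which I would do by contradiction. Suppose $L<\sup(I)$; then in particular $L<+\infty$, so $L$ is a genuine real number. The terms $K^n(t)\in I$ increase to $L$ from below, while the hypothesis $L<\sup(I)$ guarantees that $I$ also contains points strictly greater than $L$; since $I$ is an interval (hence convex), these two facts place $L$ in the interior $I\setminus\partial I$. Now Lemma~\ref{s prop}, applied with $a=t$, asserts that whenever $\lim_n K^n(t)=c\in I$ the point $c$ is a separation point. Thus $L\in\cS(\kappa)$, contradicting $\cS(\kappa)=\emptyset$. Therefore $L=\sup(I)$, and reading this off in the two cases $\sup(I)<+\infty$ and $\sup(I)=+\infty$ yields exactly the claimed values of the limit.

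I expect the only real obstacle to be the boundary bookkeeping in the second paragraph: the definition of separation point (Definition~\ref{def:separation pt}) and the conclusion of Lemma~\ref{s prop} are framed in terms of interior points, so I must verify that the hypothetical finite limit $L$ is strictly interior before invoking them. The assumption $L<\sup(I)$ is precisely what rules out the degenerate possibility that $L$ is the right endpoint of $I$, while the strict monotonicity of $\{K^n(t)\}$ keeps it away from the left endpoint. Once interiority is secured, the rest follows immediately from the monotone convergence established in the first paragraph together with Lemma~\ref{s prop}.
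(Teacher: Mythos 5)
Your proposal is correct and follows the route the paper intends: Proposition~\ref{s cor} is stated as an immediate consequence of Lemma~\ref{s prop}, namely that a finite limit lying in $I$ would have to be a separation point, contradicting $\cS(\kappa)=\emptyset$. Your additional bookkeeping (monotonicity, $L\le\sup(I)$, and interiority of a hypothetical finite limit, using that $I$ is convex and cannot contain its supremum) is exactly what is needed to make that one-line deduction rigorous.
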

  
  \subsection{Orthogonal decomposition of $(\RR,\kappa)$}
  Following \cite{K}, we have the following definition of connected additive category.
  \begin{defn}\label{defn:connect}
Let $\cA$ be an additive category and  $(\cA_i)_{i\in I}$ a family of full
additive subcategories. We have an orthogonal decomposition
$$\cA\cong\bigoplus\limits_{i\in I}\cA_i$$
of $\cA$ if $\cA=\sum_i \cA_i$ (each object can be written as a direct sum $\bigoplus\limits_{i\in I} X_i$ with $X_i\in \cA_i$) and $\Hom(X_i,X_j)=0$ for $X_i\in\cA_i$ and $X_j\in\cA_j$ and $i\neq j$. We call $\cA_i$'s the orthogonal components of $\cA$. The additive category $\cA$ is {\bf connected} if it admits no proper decomposition $\cA\cong\cA_1\oplus \cA_2$.
 \end{defn}

\begin{rmk}
A finite dimensional algebra $\Lambda$ is connected (i.e. $0$ and $1$ are the only central idempotents) if and only if the module category $\Lambda\modd$ is a connected additive category.
\end{rmk}

We remind the readers that there is a another notion of connected category commonly used in category theory: any two objects can be connected with a finite zigzags of (nonzero) morphisms.
Notice that every additive category is a ``connected category'' in this sense, since there are always nonzero morphisms $X\leftarrow X\oplus Y\rightarrow Y$ for any objects $X$ and $Y$.
To emphasize the difference, we always use the terminology ``connected additive category'' for our definition of connectedness as in Definition \ref{defn:connect}.
We compare these two notions in Lemma \ref{lem:hom orthogonal} below.

\begin{lem}\label{lem:hom orthogonal}
Let $\cA=\bigoplus\limits_{i\in I}\cA_i$ be an orthogonal decomposition. For any sequence of indecomposable objects $X_1, X_2, \cdots, X_n$ in $\cA$, if there are nonzero morphisms either $X_i\to X_{i+1}$ or $X_{i+1}\to X_i$ for all $0<i<n$, then $X_1$ and $X_n$ are in the same orthogonal component.
\end{lem}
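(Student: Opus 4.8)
The plan is to reduce the statement to a single basic observation: in an orthogonal decomposition, each \emph{indecomposable} object belongs to exactly one orthogonal component, and a nonzero morphism can only connect indecomposables lying in the \emph{same} component. Once this is in place, the sequence hypothesis propagates the common component along the chain by transitivity.

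First I would establish that every indecomposable object $X$ of $\cA$ lies in a unique component, which I denote $\cA_{c(X)}$. By the definition of orthogonal decomposition we may write $X\cong\bigoplus_{i\in I}Y_i$ with $Y_i\in\cA_i$. Since $X$ is indecomposable, at most one summand $Y_i$ can be nonzero; thus there is a unique index $c(X)$ with $Y_{c(X)}\cong X$ and $Y_i=0$ for $i\neq c(X)$. As the $\cA_i$ are full additive subcategories closed under isomorphism, this yields $X\in\cA_{c(X)}$ with $c(X)$ well-defined.

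Next I would use the orthogonality condition to constrain morphisms. If $X\in\cA_a$ and $Y\in\cA_b$ with $a\neq b$, then $\Hom(X,Y)=0=\Hom(Y,X)$ by hypothesis. Hence, whenever there is a nonzero morphism $X\to Y$ or $Y\to X$ between indecomposables, we are forced to have $c(X)=c(Y)$. Applying this to the given data, for each $0<i<n$ there is a nonzero morphism between $X_i$ and $X_{i+1}$ in one direction or the other, so $c(X_i)=c(X_{i+1})$; by transitivity along the chain, $c(X_1)=c(X_2)=\cdots=c(X_n)$, and in particular $X_1$ and $X_n$ lie in the same orthogonal component.

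The proof is short, so there is no serious obstacle; the only point requiring genuine care is the first step. One must verify that the index $c(X)$ really is well-defined, i.e.\ that indecomposability forces all but one summand in the component decomposition to vanish, which in turn relies on each $\cA_i$ being closed under isomorphism (so that $X\cong Y_{c(X)}\in\cA_{c(X)}$ actually gives $X\in\cA_{c(X)}$). Everything after that is a routine transitivity argument.
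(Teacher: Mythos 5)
Your proof is correct and follows essentially the same route as the paper's: an indecomposable object must lie in a single component, a nonzero morphism in either direction forces two indecomposables into the same component by orthogonality, and the conclusion follows by transitivity along the chain. The only difference is that you spell out the well-definedness of the component index $c(X)$ (which the paper takes for granted), a worthwhile bit of extra care but not a different argument.
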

\begin{proof}
	Since $X_1$ and $X_2$ are indecomposable, there must be $i,j$ such that $X_1\in\cA_i$ and $X_2\in\cA_j$.
	However, either $\Hom(X_1,X_2)\neq 0$ or $\Hom(X_2,X_1)\neq 0$ so we must have $i=j$. For the same reason, $X_2$ and $X_3$ and hence all $X_i$'s are in the same orthogonal component.  
\end{proof}

In this section, we are going to discuss the orthogonal decomposition of   $(\RR,\kappa)\pwf$ ( $(\mathbb S^1,\kappa)\pwf$) according to the (pre-)Kupisch function $\kappa$.

  \begin{lem}\label{I connect}
 Let $\kappa$ be a pre-Kupisch function on an interval $I$, then $(I,\kappa)\pwf$ is a connected additive category if and only if $\cS(\kappa)=\emptyset$.
  \end{lem}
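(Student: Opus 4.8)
The plan is to prove the two implications separately, throughout exploiting that $(I,\kappa)\pwf$ is Krull--Remak--Schmidt (so it suffices to track indecomposables $M_U$) and that $\Hom$-spaces between interval modules are controlled by the left intersection via Lemma~\ref{lem:left lem}.

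For the direction $\cS(\kappa)\neq\emptyset\Rightarrow$ disconnected, I would fix a separation point $c$ and partition the indecomposables according to whether $U\subseteq(-\infty,c)$ or $U\subseteq[c,+\infty)$. The crucial observation is that no indecomposable can straddle $c$: if $M_U\in(I,\kappa)\pwf$ then $U\subseteq[t,K(t)]$ for some $t\in I$, and by property (2) of a separation point, a witness $t<c$ forces $K(t)<c$ (hence $U\subseteq(-\infty,c)$), while $t\geq c$ forces $U\subseteq[c,+\infty)$. Let $\cA_L$ and $\cA_R$ be the full subcategories whose indecomposable summands are of the two respective types. Every object decomposes accordingly by Krull--Remak--Schmidt, so $\cA=\cA_L+\cA_R$; and since a left interval and a right interval are disjoint and separated by $c$, their left intersections are empty, whence Lemma~\ref{lem:left lem} gives $\Hom(X,Y)=0$ whenever $X\in\cA_L$ and $Y\in\cA_R$ (and vice versa). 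Both components are nonzero because $c$ is interior: $M_{\{c\}}\in\cA_R$ and $M_{\{t\}}\in\cA_L$ for any $t\in I$ with $t<c$. This is a proper orthogonal decomposition.

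For the converse $\cS(\kappa)=\emptyset\Rightarrow$ connected, I would show that in any orthogonal decomposition all indecomposables fall into a single component, using Lemma~\ref{lem:hom orthogonal}; it therefore suffices to link any two indecomposables by a zigzag of nonzero morphisms. The basic building block is the claim that whenever $a\leq b\leq K(a)$ the simples $M_{\{a\}}$ and $M_{\{b\}}$ are connected through the bridge $M_{\{a\}}\leftarrow M_{[a,b]}\leftarrow M_{\{b\}}$, where both maps are nonzero by a direct left-intersection computation (the first is the projection onto the top, the second the inclusion of the socle); note $M_{[a,b]}\in(I,\kappa)\pwf$ since $[a,b]\subseteq[a,K(a)]$. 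Taking $b=K(a)$ connects consecutive points of the orbit $\{K^n(a)\}$, so all orbit points share one component. Here the hypothesis enters: by Proposition~\ref{s cor}, $\cS(\kappa)=\emptyset$ gives $K^n(a)\to\sup(I)$, and since a pre-Kupisch domain has no maximum (Remark~\ref{rmk:pre-Kupisch function}(2)), every $b\in I$ satisfies $b<\sup(I)$, so the orbit overshoots $b$; choosing the least $n$ with $K^n(a)\geq b$ places $b$ in $[K^{n-1}(a),K(K^{n-1}(a))]$, and the claim links $M_{\{b\}}$ to the orbit. Thus all simples $M_{\{t\}}$ lie in one component. Finally, an arbitrary $M_U$ is connected to a simple: picking $p\in U$ and setting $U^-=U\cap(-\infty,p]$, the zigzag $M_U\to M_{U^-}\leftarrow M_{\{p\}}$ has both maps nonzero (again by a left-intersection check), so $M_U$ joins the common component.

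I expect the main obstacle to be this second implication, specifically guaranteeing that the orbit $\{K^n(a)\}$ genuinely passes beyond an arbitrary target $b$; this is precisely where the absence of separation points is used, via Proposition~\ref{s cor}, together with the fact that $I$ cannot contain its supremum. The remaining work --- the left-intersection computations certifying that each bridge morphism is nonzero --- is routine but must be carried out carefully, keeping track of which interval sits on the left.
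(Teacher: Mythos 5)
Your proposal is correct and takes essentially the same approach as the paper: in one direction you build the same orthogonal decomposition into summands supported in $I\cap(-\infty,c)$ versus $I\cap[c,+\infty)$ (the no-straddling argument is exactly Proposition~\ref{k prop}(3) unwound to the definition of separation point), and in the other you use Proposition~\ref{s cor} and the orbit $\{K^n(a)\}$ to produce zigzags of nonzero morphisms, concluding with Lemma~\ref{lem:hom orthogonal}. The only cosmetic difference is that you route the zigzags through simple modules via the bridges $M_{\{a\}}\leftarrow M_{[a,b]}\leftarrow M_{\{b\}}$ and the truncation $M_U\to M_{U^-}\leftarrow M_{\{p\}}$, whereas the paper routes them through the projectives $M_{[K^i(a),K^{i+1}(a)]}$ and a diamond of open/half-open interval variants; both are valid.
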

  
   \begin{proof}
  First, assume $\cS(\kappa)\neq\emptyset$.
  If $c\in I$ is a separation point, let $\cC^-$ be the full subcategory of $(I,\kappa)\pwf$ consisting representations with indecomposable summands $M_U$ where $U\subseteq I\cap(-\infty,c)$, and   $\cC^+$ be the full subcategory of $(I,\kappa)\pwf$ consisting representations with indecomposable summands $M_U$ where $U\subseteq I\cap [c,+\infty)$.
  It is immediate that $\Hom(\cC^-,\cC^+)=0=\Hom(\cC^+,\cC^-)$. At the same time, for any interval module $M_U\in (I,\kappa)\pwf$, $U\subseteq [t,K(t)]\subseteq I$. According to Proposition \ref{k prop} (3), the interval $[t,t+K(t)]\subseteq I\cap(-\infty,c)$ or $I\cap[c,+\infty)$.
  So $M_U$ is in $\cC^-$ or in $\cC^+$.
  Hence $(I,\kappa)\pwf\cong\cC^-\oplus \cC^+$ is a disconnected additive category. 
  
Now, assume $\cS(\kappa)=\emptyset$. We first show that there is a finite sequence of nonzero morphisms between any two projective indecomposables $M_{[a,K(a)]}$ and $M_{[b,K(b)]}$.
Without loss of generality, assume $a<b$.
According to Proposition~\ref{s cor}, $\lim\limits_{n\to \infty}K^n(a)=\sup(I)$ or $+\infty$.
It follows that $b\in [K^n(a),K^{n+1}(a)]$ for some $n$. Hence there is a sequence of nonzero homomorphisms $$M_{[b,K(b)]}\to M_{[K^n(a),K^{n+1}(a)]}\to  M_{[K^{n-1}(a),K^{n}(a)]}\to\cdots\to M_{[a,K(a)]}.$$
  Second, notice that, for each $p<q\in I$, there are nonzero morphisms between indecomposables in $(I,\kappa)\pwf$:
\begin{displaymath}
    \xymatrix@C=2ex@R=2ex{
    & M_{[p,q]} \ar[dr] \\
    M_{(p,q]} \ar[ur] \ar[dr] & & M_{[p,q)}. \\
    & M_{(p,q)} \ar[ur]
    }
\end{displaymath}
and the nonzero projective cover: $M_{[p,K(p)]}\to M_{[p,q]}$. It follows that there is a zigzag sequence of nonzero morphisms between any two indecomposables in $(I,\kappa)\pwf$. According to Lemma \ref{lem:hom orthogonal}, any two indecomposables in $(I,\kappa)\pwf$ are in the same orthogonal component. Therefore, $(I,\kappa)\pwf$ is a connected additive category.
  \end{proof}

   Let $\kappa$ be a pre-Kupisch function on $\RR$. We now classify the orthogonal components of $(\RR,\kappa)\pwf$.
\begin{notation}\label{note:c star}
For a separation point $c\in\cS(\kappa)$, denote by $c^*=\lim\limits_{n\to \infty}K^n(c)$ when the limit exists.
\end{notation}
The following proposition is proven by straightforward computations.
\begin{prop}
Let $\kappa$ be a pre-Kupisch function and $c\in\cS(\kappa)\neq\emptyset$.
Then $\kappa|_{[c,c*)}$ is a pre-Kupisch function on $[c,c^*)$.
If $q=\max\cS(\kappa)$ exists, then $\kappa|_{[q,+\infty)}$ is a pre-Kupisch function on $[q,+\infty)$.
If $p=\min\cS(\kappa)$ exists, then $\kappa|_{(-\infty,p)}$ is a pre-Kupisch function on $(-\infty,p)$.
\end{prop}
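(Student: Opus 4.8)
The plan is to verify the three defining conditions of a pre-Kupisch function (Definition~\ref{def:pre-Kupisch function}) for each of the three claimed restrictions, which amounts to checking, for the relevant subinterval $J$, that (i) $J$ is genuinely an interval, (ii) $K(t)=\kappa(t)+t$ restricted to $J$ takes values inside $J$ with $[t,K(t)]\subseteq J$ for all $t\in J$, and (iii) $K$ is increasing on $J$ (which is automatic since $K$ is already increasing on all of $I$). Since monotonicity and positivity of $\kappa$ are inherited for free from the ambient function on $I$, the entire content is the containment $[t,K(t)]\subseteq J$; this is exactly the point where separation points enter, and it is the only step requiring real argument.

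First I would treat $J=[c,c^*)$ where $c\in\cS(\kappa)$ and $c^*=\lim_{n\to\infty}K^n(c)$, using Notation~\ref{note:c star} under the standing hypothesis that the limit exists in $\RR$. The left endpoint is handled by Lemma~\ref{s prop} applied at the base point $c$: since $c$ is a separation point, $K(c)\geq c$, and more precisely the sequence $\{K^n(c)\}$ is strictly increasing by the inequality $K^{n+1}(c)>K^n(c)$ recorded just before Lemma~\ref{s prop}, so every $t\in[c,c^*)$ satisfies $t\geq c$ and thus $K(t)\geq K(c)\geq c$, keeping $K(t)$ to the right of the left endpoint. For the right endpoint I would argue exactly as in the proof of Lemma~\ref{s prop}: given $t\in[c,c^*)$, there is some $n$ with $t<K^n(c)<c^*$, whence $K(t)\leq K(K^n(c))=K^{n+1}(c)<\lim_{m\to\infty}K^m(c)=c^*$ by strict monotonicity of the iterates. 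Therefore $[t,K(t)]\subseteq[c,c^*)$, which is the desired containment.

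Next, for $J=[q,+\infty)$ with $q=\max\cS(\kappa)$, the right endpoint is vacuous and only the left endpoint must be checked: for $t\geq q$ we have $K(t)\geq K(q)\geq q$ (again $q$ being a separation point gives $K(q)\geq q$, and $K$ is increasing), so $[t,K(t)]\subseteq[q,+\infty)$. For $J=(-\infty,p)$ with $p=\min\cS(\kappa)$, symmetrically the left endpoint is vacuous, and the right endpoint follows from Proposition~\ref{k prop}(1): since $\lim_{t\to p^-}\kappa(t)=0$ (part of $p$ being a separation point), that proposition yields $K(t)\leq p$ for all $t<p$, and in fact $K(t)<p$ since $p$ is a separation point by Definition~\ref{def:separation pt}(2); hence $[t,K(t)]\subseteq(-\infty,p)$ for all $t\in(-\infty,p)$.

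The main obstacle, and the only genuinely delicate point, is the right-endpoint containment for the $[c,c^*)$ case, because it relies on the subtle interplay between strict monotonicity of the iterate sequence and the limit $c^*$ being attained only in the limit; one must be careful that $c^*$ is excluded from $J$ (it is a half-open interval) precisely so that $K(t)<c^*$ is strict, which is what the cofinal exhaustion $(c,c^*)=\bigcup_n(K^n(c),K^{n+1}(c)]$ from Lemma~\ref{s prop} guarantees. Everything else is a routine transfer of the ambient monotonicity and the separation-point definition, so I would keep those verifications brief and concentrate the exposition on this containment.
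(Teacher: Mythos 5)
Your proof is correct: the paper itself offers no argument (it dismisses the proposition as ``proven by straightforward computations''), and your verification supplies exactly the intended computations, in particular reusing the exhaustion argument $(c,c^*)=\bigcup_n(K^n(c),K^{n+1}(c)]$ from the paper's own proof of Lemma~\ref{s prop} for the delicate right-endpoint containment in the $[c,c^*)$ case, and invoking condition (2) of Definition~\ref{def:separation pt} for the $(-\infty,p)$ case. The only cosmetic quibble is that $K(c)>c$ follows simply from $\kappa(c)>0$ rather than from $c$ being a separation point, but this does not affect the argument.
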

   
\begin{lem}
Whenever they exist, the subcategories $\cC_c=([c,c^*),\kappa|_{[c,c*)})\pwf$, $\cC_{max}=([q,+\infty),\kappa|_{[q,+\infty)})\pwf$, and $\cC_{min}=((-\infty,p),\kappa|_{(-\infty,p)})\pwf$ are connected additive category.
\end{lem}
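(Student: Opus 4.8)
The plan is to reduce each of the three assertions to the already-proven Lemma~\ref{I connect}, which states that $(J,\mu)\pwf$ is a connected additive category exactly when $\cS(\mu)=\emptyset$. Since $\cC_c$, $\cC_{max}$, and $\cC_{min}$ are each of the form $(J,\kappa|_J)\pwf$ for suitable subintervals $J$, it suffices to verify that the restricted pre-Kupisch function $\kappa|_J$ has \emph{no} separation points on $J$. The preceding (unnumbered) proposition guarantees that $\kappa|_J$ is genuinely a pre-Kupisch function on each of these intervals, so Lemma~\ref{I connect} applies once the emptiness of the separation set is established.

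First I would treat $\cC_c=([c,c^*),\kappa|_{[c,c^*)})\pwf$. By Lemma~\ref{s prop}, the limit $c^*=\lim_{n\to\infty}K^n(c)$, when it lies in $I$, is precisely the \emph{minimum} separation point of $\kappa$ strictly greater than $c$; and the same lemma's proof shows there is no separation point of $\kappa$ in the open interval $(c,c^*)$. The key observation is that separation points of the restriction $\kappa|_{[c,c^*)}$ are separation points of $\kappa$ lying in the interior of $[c,c^*)$, i.e.\ in $(c,c^*)$ (the left endpoint $c$ is excluded because a separation point must be interior, per Definition~\ref{def:separation pt}, and the right endpoint is not in the half-open interval). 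Hence $\cS(\kappa|_{[c,c^*)})=\emptyset$, and Lemma~\ref{I connect} gives connectedness. I should also check the boundary behavior of $K$ on $[c,c^*)$ is consistent, but this is exactly what the cited proposition encodes.

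For $\cC_{max}=([q,+\infty),\kappa|_{[q,+\infty)})\pwf$ with $q=\max\cS(\kappa)$, the argument is analogous: any separation point of $\kappa|_{[q,+\infty)}$ would be an interior point of $[q,+\infty)$, hence a separation point of $\kappa$ strictly larger than $q$, contradicting the maximality of $q$. Thus $\cS(\kappa|_{[q,+\infty)})=\emptyset$ and connectedness follows. For $\cC_{min}=((-\infty,p),\kappa|_{(-\infty,p)})\pwf$ with $p=\min\cS(\kappa)$, a separation point of the restriction is an interior point of $(-\infty,p)$, hence a separation point of $\kappa$ strictly less than $p$, again contradicting minimality; so $\cS(\kappa|_{(-\infty,p)})=\emptyset$. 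The main (though mild) obstacle is the careful bookkeeping at the endpoints: one must confirm that a separation point of the restricted function really is an \emph{interior} point of the subinterval and that the two defining conditions of Definition~\ref{def:separation pt} for $\kappa|_J$ coincide with those for $\kappa$ at such an interior point—this hinges on the fact that the one-sided limit $\lim_{t\to c^-}\kappa(t)$ and the inequality $K(t)<c$ for $t<c$ are \emph{local} conditions unaffected by restricting the domain, provided the relevant points remain in $J$. Once this is checked, all three cases are immediate applications of Lemma~\ref{I connect}.
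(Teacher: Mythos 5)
Your proposal is correct and follows essentially the same route as the paper: identify the separation points of the restricted function with those of $\kappa$ lying in the interior of the subinterval, invoke Lemma~\ref{s prop} (for $\cC_c$) and the maximality/minimality of $q$ and $p$ (for $\cC_{max}$, $\cC_{min}$) to see this set is empty, and then apply Lemma~\ref{I connect}. The paper compresses the endpoint bookkeeping you describe into the single assertion that $\cS(\kappa|_{[c,c^*)})=\cS(\kappa)\cap(c,c^*)$ follows from the definition, so your extra care there is consistent with, not divergent from, its argument.
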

\begin{proof}
It follows from the definition that $\cS(\kappa|_{[c,c*)})=\cS(\kappa)\cap(c,c^*)$. 
However, according to Lemma \ref{s prop}, $\cS(\kappa)\cap(c,c^*)=\emptyset$. Hence, by Lemma \ref{I connect}, the subcategory $\cC_c=([c,c^*),\kappa|_{[c,c*)})\pwf$ is a connected additive category.
  
   Similarly, if $q=\max\cS(\kappa)$ (respectively $p=\min\cS(\kappa)$) exists we have  $\cC_{max}=([q,+\infty),\kappa|_{[q,+\infty)})\pwf$ (respectively $\cC_{min}=((-\infty,p),\kappa|_{(-\infty,p)})\pwf$)  is a connected additive category.
   \end{proof}
   
    Therefore, if $\cS(\kappa)\neq\emptyset$, whichever of $\cC_c$, $\cC_{max}$, and $\cC_{min}$ exist are all of the orthogonal components of $(\RR,\kappa)\pwf$.
   Up to equivalence (Corollary \ref{R equiv 1}), we have the following orthogonal decomposition of $(\RR,\kappa)\pwf$.
   
  \begin{thm}\label{R connect}
Let $\kappa$ be a pre-Kupisch function on $\RR$ and $\cS(\kappa)$ the set of separation points. Then
$$(\RR,\kappa)\pwf\cong \begin{cases}
\bigoplus\limits_{c\in\cS(\kappa)}(\bR^{\geq 0},\kappa_c)\pwf & \inf(\cS(\kappa))=-\infty \\
(\RR,\kappa_0)\pwf\oplus\bigoplus\limits_{c\in\cS(\kappa)}(\bR^{\geq 0},\kappa_c)\pwf &\inf(\cS(\kappa))>-\infty,
\end{cases}$$ 
where $\kappa_0$ and $\kappa_c$'s are pre-Kupisch functions on $\RR$ and $\bR^{\geq 0}$, respectively, which satisfy $\cS(\kappa_0)=\cS(\kappa_c)=\emptyset$.
    \end{thm}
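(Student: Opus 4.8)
The plan is to construct the claimed orthogonal decomposition explicitly by grouping the indecomposable interval modules according to which ``component'' of $\RR$ their supports lie in, where the components are the half-open intervals cut out by consecutive separation points, together with the two (possibly present) unbounded tail pieces. First I would use the countability of $\cS(\kappa)$ and the fact, recorded in Lemma~\ref{s prop} and Notation~\ref{note:c star}, that for each $c\in\cS(\kappa)$ the iterates $K^n(c)$ either escape to $+\infty$ or converge to the next separation point $c^*=\lim_n K^n(c)$. This tells me that the intervals $[c,c^*)$ for $c\in\cS(\kappa)$, possibly an unbounded leftmost piece $(-\infty,p)$ with $p=\min\cS(\kappa)$, and a possible rightmost piece $[q,+\infty)$ with $q=\max\cS(\kappa)$, are pairwise disjoint and cover $\RR$. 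The key structural input is Proposition~\ref{k prop}(3): no separation point lies in any orbit-interval $(t,K(t)]$, so the support of every projective indecomposable $M_{[t,K(t)]}$ (and hence of every compatible $M_U$) lies entirely within a single one of these pieces.

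The core of the argument is to verify that this partition of indecomposables induces an orthogonal decomposition in the sense of Definition~\ref{defn:connect}. For each piece I set $\cC_c=([c,c^*),\kappa|_{[c,c^*)})\pwf$ (and analogously $\cC_{min}$, $\cC_{max}$), and I would check two things: first, that $\sum_i\cC_i$ is all of $(\RR,\kappa)\pwf$, which is immediate from the covering property above together with the Krull--Remak--Schmidt decomposition of $\RR\pwf$ from Theorem~\ref{barcode decom}; and second, that $\Hom(\cC_i,\cC_j)=0$ for $i\neq j$. For the Hom-vanishing I would invoke Lemma~\ref{lem:left lem}: a nonzero morphism $M_V\to M_U$ requires $U\cap_L V\neq\emptyset$, which in particular forces $U$ and $V$ to overlap or be adjacent in a way incompatible with lying in two different half-open pieces separated by a $c\in\cS(\kappa)$. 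The preceding lemma in the excerpt already establishes that each $\cC_c$, $\cC_{min}$, $\cC_{max}$ is a \emph{connected} additive category (its restricted Kupisch function has empty separation set by Lemma~\ref{s prop} and Lemma~\ref{I connect}), so these are genuinely the indecomposable orthogonal components.

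Finally I would translate each piece into the normalized form appearing in the statement using Corollary~\ref{R equiv 1}: a piece $[c,c^*)$ is an interval of the form $[a,b)$, hence $\cC_c\cong(\RR^{\geq0},\kappa_c)\pwf$ for some pre-Kupisch function $\kappa_c$ with $\cS(\kappa_c)=\emptyset$; the rightmost piece $[q,+\infty)$ is of the form $[a,+\infty)$ and likewise normalizes to $(\RR^{\geq0},\cdot)\pwf$ (so it is absorbed into the family indexed by $c\in\cS(\kappa)$, taking $c=q$); and the leftmost open piece $(-\infty,p)$, when $\inf\cS(\kappa)>-\infty$, normalizes to $(\RR,\kappa_0)\pwf$ with $\cS(\kappa_0)=\emptyset$. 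When $\inf\cS(\kappa)=-\infty$ there is no leftmost open piece to account for, which explains the case split in the statement. The main obstacle I anticipate is the careful bookkeeping at the separation points themselves --- making sure each separation point $c$ is assigned to exactly one piece (the right-closed piece $[c,c^*)$ it begins, not the piece it terminates), that the half-open conventions make the pieces genuinely disjoint and exhaustive, and that the degenerate situations (no $\min$, no $\max$, or $\cS(\kappa)$ infinite in one or both directions) are each handled so that the two displayed cases capture all possibilities; the analytic and homological content is otherwise fully supplied by the earlier results.
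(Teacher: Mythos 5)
Your proposal is correct and takes essentially the same route as the paper: the paper also cuts $\RR$ at the separation points into the pieces $[c,c^*)$, $(-\infty,p)$, and $[q,+\infty)$ (its subcategories $\cC_c$, $\cC_{min}$, $\cC_{max}$), shows each is a connected additive category via Lemma~\ref{s prop} and Lemma~\ref{I connect}, and then normalizes the pieces using Corollary~\ref{R equiv 1}, with the open left piece accounting for the case split on $\inf(\cS(\kappa))$. The only difference is one of emphasis --- you spell out the covering and Hom-vanishing verifications (via Proposition~\ref{k prop}(3) and Lemma~\ref{lem:left lem}) that the paper leaves implicit in its ``therefore'' step.
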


 \subsection{Orthogonal decomposition of $(\bS^1,\kappa)$}
 
Let $\kappa$ be a Kupisch function and recall $K(t)=\kappa(t)+t$. All the properties still hold for $\kappa$ as a pre-Kupisch function on $\RR$. Furthermore, since $\kappa$ is 1-periodic, it has the following property.

 \begin{lem}
 If $\kappa$ is a Kupisch function, then $c\in\cS(\kappa)$ if and only if $c+1\in\cS(\kappa)$. 
 \end{lem}
 
 Therefore $\cS(\kappa)= \{c+n\mid c\in\cS(\kappa)\cap[0,1),\ n\in\mathbb Z$\}.
 
If $\cS(\kappa)=\emptyset$, by Theorem \ref{I connect}, $(\RR,\kappa)\pwf$ is connected. Therefore, under the push-down, $(\bS^1,\kappa)\pwf$ is connected.

 \begin{lem}
Let $\kappa$ be a Kupisch function, $c\in\cS(\kappa)$, and $I=[c,c+1)$.
Then $\kappa|_I$ is a pre-Kupisch function on $I$ and $(\bS^1,\kappa)\pwf\cong(I,\kappa|_I)\pwf$.
   \end{lem}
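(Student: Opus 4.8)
The plan is to prove the statement in two stages: first verify that $\kappa|_I$ satisfies the two axioms of a pre-Kupisch function, and then exhibit the claimed equivalence as the restriction of the push-down functor $p_\bullet$ to interval modules supported in the fundamental domain $I=[c,c+1)$.

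For the first stage, monotonicity of $K(t)=\kappa(t)+t$ is inherited directly from $\kappa$ being a pre-Kupisch function on $\RR$, so the only genuine point is the containment $[t,K(t)]\subseteq I$ for every $t\in I$. Here I would invoke that $c+1$ is again a separation point (by the preceding lemma, separation points of a Kupisch function are $1$-periodic). For $t\in[c,c+1)$ we have $t<c+1$, so condition (2) of Definition~\ref{def:separation pt} applied at the separation point $c+1$ yields $K(t)<c+1$, while $K(t)>t\geq c$; hence $[t,K(t)]\subseteq[c,c+1)=I$, as required.

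For the equivalence I would set $F:(I,\kappa|_I)\pwf\to(\bS^1,\kappa)\pwf$ to be the restriction of $p_\bullet$, so $F(M_U)=\overline{M}_U=p_\bullet M_U$. This is well-defined and lands in $(\bS^1,\kappa)\pwf$: objects of $(I,\kappa|_I)\pwf$ are supported in $I$, and since $I$ is a fundamental domain for the $\ZZ$-action each fibre of $p$ meets $I$ in exactly one point, so $\overline{M}_U(x)$ has at most one summand and $p_\bullet$ preserves pointwise finite-dimensionality; compatibility with $\kappa$ is immediate from the definition. Functoriality and respect for composition are automatic, $F$ being a restriction of the genuine functor $p_\bullet$. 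For density I would take an indecomposable $\overline{M}_W$ in $(\bS^1,\kappa)\pwf$, so $W\subseteq[t,K(t)]$ for some $t$; choosing $n$ with $t\in[c+n,c+n+1)$ and repeating the Stage-1 argument at the separation point $c+n+1$ gives $[t,K(t)]\subseteq[c+n,c+n+1)$. Then $U:=W-n\subseteq I$, the module $M_U$ lies in $(I,\kappa|_I)\pwf$ (using $K(t-n)=K(t)-n$, which follows from $1$-periodicity of $\kappa$), and $\overline{M}_U=\overline{M}_W$ since push-down is invariant under $\ZZ$-translation.

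Finally, for full faithfulness I would use Lemma~\ref{lem:n left lem} to write $\Hom_{\bS^1}(\overline{M}_U,\overline{M}_V)\cong\bigoplus_{i\in\ZZ}\Hom_\RR(M_U,M_{V+i})$ for $U,V\subseteq I$, and show only the $i=0$ term survives: for $i\geq 1$ the interval $V+i$ lies in $[c+1,\infty)$ and for $i\leq -1$ it lies in $(-\infty,c)$, so $V+i$ is disjoint from $U\subseteq[c,c+1)$, whence $(V+i)\cap_L U=\emptyset$ and Lemma~\ref{lem:left lem} gives $\Hom_\RR(M_U,M_{V+i})=0$. Thus $\Hom_{\bS^1}(\overline{M}_U,\overline{M}_V)\cong\Hom_\RR(M_U,M_V)=\Hom_{(I,\kappa|_I)}(M_U,M_V)$ (the last equality by fullness), and extending additively shows $F$ is fully faithful; with density this gives the equivalence. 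The step I expect to be the main obstacle is the density argument, since that is precisely where the separation-point hypothesis is essential: it is what confines every projective interval $[t,K(t)]$ to a single translate $[c+n,c+n+1)$ of the fundamental domain and rules out string modules that wrap past $c$, which is also exactly the fact that collapses the Hom computation to the single index $i=0$.
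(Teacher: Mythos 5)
Your proposal is correct and follows essentially the same route as the paper: both define the equivalence as the restriction of the push-down functor $p_\bullet$ to modules supported in the fundamental domain $I=[c,c+1)$, and both collapse the Hom computation of Lemma~\ref{lem:n left lem} to the single index $i=0$ using disjointness of $U$ and $V+i$ for $i\neq 0$. The only difference is one of detail: you explicitly verify that $\kappa|_I$ is a pre-Kupisch function and spell out the density argument (both via the $1$-periodicity of separation points), steps the paper's terse proof leaves implicit.
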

   
   \begin{proof}
   Let $F:(I,\kappa|_I)\pwf\to(\bS^1,\kappa)\pwf$ be the push-down functor sending $F(M_U)=\overline{M}_U$. For intervals $U$, $V\subseteq [c,c+1)$, $U\cap (V+i)=\emptyset$ for $i\neq 0$. Hence $\Hom_{\bS^1}(\overline{M}_U,\overline{M}_V)\cong  \oplus_{i\in\mathbb Z}\Hom_{\RR}(M_U,M_{V+i})=\Hom_\RR(M_U,M_V)$. Therefore, $F$ induces an equivalence $(I,\kappa|_I)\pwf\cong(\bS^1,\kappa)\pwf$.
   \end{proof}
Recall that for any $c\in I\subseteq\RR$, $c^*=\lim\limits_{n\to \infty}K^n(c)$ when the limit exists (Notation~\ref{note:c star}).
 \begin{thm}\label{S1 components}
Suppose $c_0\in\cS(\kappa)\neq \emptyset$. Then $$(\bS^1,\kappa)\pwf\cong \bigoplus\limits_{c\in\cS(\kappa)\cap[c_0,c_0+1)}([c,c^*),\kappa|_{[c,c^*)})\pwf\cong\bigoplus\limits_{c\in\cS(\kappa)\cap[c_0,c_0+1)}(\bR^{\geq 0},\kappa_c)\pwf$$
for some pre-Kupisch functions $\kappa_c$, satisfying $\cS(\kappa_c)=\emptyset$.
 \end{thm}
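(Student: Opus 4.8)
The plan is to reduce the circle statement to the already-established interval results and then use the periodicity of $\kappa$ to organize the orthogonal components into a single fundamental domain of the $\ZZ$-action. First I would invoke the lemma proved just above, which gives an equivalence $(\bS^1,\kappa)\pwf\cong(I,\kappa|_I)\pwf$ for $I=[c_0,c_0+1)$ where $c_0$ is a chosen separation point. The key observation is that since $c_0\in\cS(\kappa)$, the interval $[c_0,c_0+1)$ is a natural fundamental domain, and by the periodicity lemma ($c\in\cS(\kappa)\iff c+1\in\cS(\kappa)$) the separation points of $\kappa|_I$ lying in $I$ are exactly $\cS(\kappa)\cap[c_0,c_0+1)$.

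Next I would apply Theorem~\ref{R connect} (or rather its interval analogue, built from Lemma~\ref{I connect} and the subsequent lemma identifying the connected pieces) to decompose $(I,\kappa|_I)\pwf$ along its separation points. Because $I=[c_0,c_0+1)$ is half-open with left endpoint a separation point, every separation point $c\in\cS(\kappa)\cap[c_0,c_0+1)$ cuts the interval, and the orthogonal components are precisely the categories $([c,c^*),\kappa|_{[c,c^*)})\pwf$ as $c$ ranges over $\cS(\kappa)\cap[c_0,c_0+1)$. Here I must check that the pieces tile $I$ correctly: the right endpoint $c^*=\lim_n K^n(c)$ of one piece is the next separation point (by Lemma~\ref{s prop}), so consecutive components abut, and the last piece reaches up to $c_0+1$, which under the circle identification is glued back to $c_0$. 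This gluing is exactly what the fundamental-domain picture guarantees, and it is the point where periodicity of $\kappa$ is essential—without $\kappa(t+1)=\kappa(t)$ the endpoint behavior would not match.

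Finally I would identify each connected component $([c,c^*),\kappa|_{[c,c^*)})\pwf$ with $(\bR^{\geq 0},\kappa_c)\pwf$ for a suitable pre-Kupisch function $\kappa_c$ with $\cS(\kappa_c)=\emptyset$, using Corollary~\ref{R equiv 1}: since $[c,c^*)$ is a half-open interval of the form $[a,b)$, there is an orientation-preserving homeomorphism onto $\bR^{\geq 0}$ inducing the push-forward $\kappa_c$, and by Lemma~\ref{s prop} together with the earlier proposition there are no separation points strictly inside $(c,c^*)$, so $\cS(\kappa_c)=\emptyset$ and each piece is genuinely connected. Assembling these equivalences additively yields the claimed decomposition.

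The main obstacle I anticipate is the careful bookkeeping at the boundary of the fundamental domain: one must verify that the half-open interval $[c_0,c_0+1)$ is exactly partitioned by the intervals $[c,c^*)$ with no overlap and no gap, and in particular that the component containing $c_0$ and the ``last'' component wrapping toward $c_0+1$ are correctly accounted for under the circle identification. This requires confirming that $\cS(\kappa)\cap[c_0,c_0+1)$ is finite or at least that the $c^*$ values chain together to cover $I$, which follows from Lemma~\ref{s prop} (each $c^*$ is the next separation point) and the periodicity, but the indexing deserves explicit care rather than being treated as routine.
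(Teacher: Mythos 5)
Your proposal is correct and takes essentially the same route as the paper: the paper likewise reduces to the fundamental domain $[c_0,c_0+1)$ via the push-down equivalence lemma immediately preceding the theorem, decomposes that interval category along the separation points $\cS(\kappa)\cap[c_0,c_0+1)$ using Lemma~\ref{I connect}, Lemma~\ref{s prop}, and the connectedness of the pieces $([c,c^*),\kappa|_{[c,c^*)})\pwf$, and then identifies each piece with $(\bR^{\geq 0},\kappa_c)\pwf$ via Corollary~\ref{R equiv 1}. The endpoint bookkeeping you flag (that the intervals $[c,c^*)$ chain to tile the fundamental domain, with no module wrapping past $c_0$ precisely because $c_0$ is a separation point) is exactly what the paper leaves implicit, and your handling of it is sound.
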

 
 Notice that $\cS(\kappa)\cap[c_0,c_0+1)$ is in bijection with $\cS(\kappa)\cap[0,1)$, since $\kappa$ is $1$-periodic.
 
\begin{cor}[to Theorem~\ref{S1 components}]
 $(\bS^1,\kappa)\pwf$ is a connected additive category if and only if $|\cS(\kappa)\cap[0,1)|\leq 1$. 
\end{cor}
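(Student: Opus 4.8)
The plan is to read connectedness directly off the orthogonal decomposition provided by Theorem~\ref{S1 components}, after translating the hypothesis $|\cS(\kappa)\cap[0,1)|\leq 1$ into a count of summands. First I would record that, since $\kappa$ is $1$-periodic, the set $\cS(\kappa)$ is invariant under translation by $1$ (the periodicity lemma preceding Theorem~\ref{S1 components}), so for every $c_0$ the slice $\cS(\kappa)\cap[c_0,c_0+1)$ has the same cardinality as $\cS(\kappa)\cap[0,1)$; in particular $\cS(\kappa)=\emptyset$ if and only if $\cS(\kappa)\cap[0,1)=\emptyset$. Consequently, whenever $\cS(\kappa)\neq\emptyset$, the number of orthogonal components in Theorem~\ref{S1 components} is exactly $|\cS(\kappa)\cap[0,1)|$, and the whole statement reduces to counting these components.

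For the direction $|\cS(\kappa)\cap[0,1)|\leq 1 \Rightarrow$ connected, I would split into two cases. If the cardinality is $0$, then $\cS(\kappa)=\emptyset$, and the remark preceding Theorem~\ref{S1 components} already delivers connectedness: $(\RR,\kappa)\pwf$ is connected by Lemma~\ref{I connect}, and this passes to $(\bS^1,\kappa)\pwf$ under the push-down. If the cardinality is $1$, I choose $c_0\in\cS(\kappa)$, so that $\cS(\kappa)\cap[c_0,c_0+1)=\{c_0\}$ is a singleton and the direct sum in Theorem~\ref{S1 components} collapses to the single summand $(\bR^{\geq 0},\kappa_{c_0})\pwf$ with $\cS(\kappa_{c_0})=\emptyset$. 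By Lemma~\ref{I connect} this summand is a connected additive category, and hence so is the equivalent category $(\bS^1,\kappa)\pwf$.

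For the converse I would argue by contraposition: assume $|\cS(\kappa)\cap[0,1)|\geq 2$. Then $\cS(\kappa)\neq\emptyset$, and fixing $c_0\in\cS(\kappa)$, Theorem~\ref{S1 components} expresses $(\bS^1,\kappa)\pwf$ as an orthogonal direct sum indexed by $\cS(\kappa)\cap[c_0,c_0+1)$, a set with at least two elements. Partitioning this index set into two nonempty blocks $I_1\sqcup I_2$ and taking $\cA_j$ to be the full additive subcategory generated by the summands indexed by $I_j$ yields $(\bS^1,\kappa)\pwf\cong\cA_1\oplus\cA_2$ in the sense of Definition~\ref{defn:connect}: that $\Hom(\cA_1,\cA_2)=\Hom(\cA_2,\cA_1)=0$ follows because these groups are built from $\Hom$ spaces between distinct orthogonal components, which vanish. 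To see the decomposition is proper I must check each $\cA_j$ is nonzero, which is clear since each summand $(\bR^{\geq 0},\kappa_c)\pwf$ contains a nonzero object, for instance the projective $M_{[0,\kappa_c(0)]}$ (as $\kappa_c>0$). Hence $(\bS^1,\kappa)\pwf$ is not connected.

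The argument is essentially bookkeeping on top of Theorem~\ref{S1 components}, so there is no deep obstacle; the only points requiring care are the nonvanishing of each block (handled by exhibiting an explicit projective) and the routine verification that regrouping the summands of an orthogonal decomposition again produces an orthogonal decomposition, both of which are immediate from the vanishing of $\Hom$ between distinct components.
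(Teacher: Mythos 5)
Your proof is correct and follows essentially the same route the paper intends: the paper derives this corollary directly from the orthogonal decomposition in Theorem~\ref{S1 components} (whose index set $\cS(\kappa)\cap[c_0,c_0+1)$ is in bijection with $\cS(\kappa)\cap[0,1)$ by periodicity), together with the remark that when $\cS(\kappa)=\emptyset$ connectedness of $(\RR,\kappa)\pwf$ from Lemma~\ref{I connect} passes to $(\bS^1,\kappa)\pwf$ under push-down. Your case analysis and the verification that each summand $(\bR^{\geq 0},\kappa_c)\pwf$ is nonzero and (via $\cS(\kappa_c)=\emptyset$) connected is exactly the bookkeeping the paper leaves implicit.
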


   \begin{exm}
  If $\kappa(t_0)\geq 1$ for some $t_0$, then $(\mathbb S^1,\kappa)\pwf$ is a connected additive category
  \end{exm}
 
 In Section~\ref{sec:homeomorphisms and equivalences} we show an example of a category $(\bS^1,\kappa)\pwf$ which is not a connected additive category (Example~\ref{n piece}).

\section{Orientation Preserving Homeomorphisms and Equivalences}\label{sec:homeomorphisms and equivalences}

In this section, we prove the converse of Theorems~\ref{interval equiv}~and~\ref{iso1} for (pre-)Kupisch functions without separation points. First, we summarize some  properties of equivalence functors, which we use later.

\begin{lem}\label{lem:equivalence}
Let $F:\mathcal C\to \mathcal D$ be an equivalence between abelian categories. Then
\begin{enumerate}
\item $F$ is exact;
\item $F$ preserves isomorphism classes;
\item $F$ preserves simple objects;
\item $F$ preserves socle and top; (i.e. If $S$ is a simple sub-object/quotient object of $X$, then $F(S)$ is a simple sub-object/quotient object of $F(X)$.)
\item $F$ preserves indecomposability.
\item $F$ preserves bricks.
\item $F$ preserves projective objects and preserves injective objects.
\item $F$ preserves subfactors; (i.e, if $Y$ is a subobject of a quotient object $X$ in $\mathcal{C}$ then $F(Y)$ is a subobject of a quotient object of $F(X)$ in $\mathcal{D}$.)
\end{enumerate}
\end{lem}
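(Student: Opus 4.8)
The plan is to derive all eight items from two structural facts about an equivalence, rather than verifying each separately from scratch. I would fix a quasi-inverse $G\colon\mathcal{D}\to\mathcal{C}$ together with natural isomorphisms $GF\cong\id_{\mathcal{C}}$ and $FG\cong\id_{\mathcal{D}}$; recall that under the standing convention of this section $F$ and $G$ are $\bk$-linear and additive, and that $F$ is fully faithful. The first master fact is that, since $F$ is fully faithful and additive, for every object $X$ the map $\End_{\mathcal{C}}(X)\to\End_{\mathcal{D}}(FX)$, $\varphi\mapsto F\varphi$, is an isomorphism of $\bk$-algebras, and $F$ reflects isomorphisms (if $F\alpha$ is invertible then so is $\alpha$). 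The second master fact is that $F$ is at once a left and a right adjoint of $G$, so $F$ preserves all limits and colimits; in particular it preserves the zero object, finite biproducts, kernels, cokernels, monomorphisms and epimorphisms. I would state and justify these two facts first, and then read off each item.

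From the second fact, preservation of kernels and cokernels is exactly statement (1), so $F$ is exact. Statement (2) is formal: any functor sends isomorphisms to isomorphisms, and if $FX\cong FY$ then $X\cong GFX\cong GFY\cong Y$, so $F$ both preserves and reflects isomorphism classes. For (5) and (6) I would invoke the first master fact: an object $X$ is indecomposable exactly when $\End_{\mathcal{C}}(X)$ has no idempotents other than $0$ and $1$, and is a brick exactly when $\End_{\mathcal{C}}(X)$ is a division ring; since $\End_{\mathcal{C}}(X)\cong\End_{\mathcal{D}}(FX)$ as rings, both conditions transfer verbatim to $FX$.

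The remaining items are consequences of exactness together with the equivalence. For (3), a nonzero object $S$ is simple precisely when its lattice of subobjects is $\{0,S\}$; because $F$ preserves monomorphisms and $G$ carries subobjects of $FS$ back to subobjects of $GFS\cong S$, the subobject lattices of $S$ and $FS$ are isomorphic, so $FS$ is again simple (and nonzero by (2)). Statement (4) then follows by applying (3) to a simple subobject or simple quotient of $X$ and using that $F$ preserves the corresponding monomorphism or epimorphism. Statement (8) is the same observation for subfactors: the monomorphism and epimorphism exhibiting $Y$ as a subquotient of $X$ are carried by $F$ to a monomorphism and epimorphism exhibiting $FY$ as a subquotient of $FX$. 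For (7) I would transport projectivity along the adjunction: there is a natural isomorphism $\Hom_{\mathcal{D}}(FP,-)\cong\Hom_{\mathcal{C}}(P,G(-))$, and since $G$ is exact and $\Hom_{\mathcal{C}}(P,-)$ is exact when $P$ is projective, the composite $\Hom_{\mathcal{D}}(FP,-)$ is exact, so $FP$ is projective; the injective case is dual, using $G$ as a right adjoint instead.

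The only genuinely non-formal step is the second master fact, namely that an equivalence preserves both limits and colimits — equivalently, that the quasi-inverse is simultaneously a left and right adjoint — and its quantitative consequence in (7) that $\Hom_{\mathcal{D}}(FP,-)$ is exact. These are standard, so I expect the main effort to be organizational: isolating the two master facts cleanly enough that items (1)--(8) become short one-line deductions rather than repeated appeals to the definition of an equivalence.
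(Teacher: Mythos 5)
Your proof is correct in all eight items, but there is nothing in the paper to compare it against: the authors state this lemma as a summary of standard facts about equivalences and give no proof whatsoever (the text moves directly on to the next lemma). Your write-up therefore supplies exactly what the paper leaves implicit, and the organization is a good one: items (1), (3), (4), (7), (8) all flow from the fact that an equivalence is simultaneously a left and a right adjoint of its quasi-inverse (hence exact, and preserving monomorphisms, epimorphisms, and subobject lattices), while (5) and (6) flow from the ring isomorphism $\End_{\mathcal C}(X)\cong\End_{\mathcal D}(FX)$ coming from full faithfulness. Two minor points worth making explicit if this were written out in full: first, the idempotent criterion for indecomposability in (5) uses that idempotents split in an abelian category (the image of an idempotent endomorphism exists and gives the direct sum decomposition), which is what lets the ring-theoretic condition on $\End_{\mathcal C}(X)$ capture decomposability of $X$; second, your appeal to the paper's standing $\bk$-linearity convention is unnecessary, since any equivalence between additive categories is automatically additive, so the lemma holds for arbitrary abelian categories exactly as stated.
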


Next, we show that if an equivalence functor sends an indecomposable interval module $M_U$ to an indecomposable interval module $N_V$, then these two intervals $U$ and $V$ have the same ``open-close'' type.
\begin{lem}
Let $F:(I,\kappa)\pwf\to(J,\lambda)\pwf$ be an equivalence of categories where $\kappa$ and $\lambda$ are pre-Kupisch functions on intervals $I$ and $J$, respectively.
Then for any $[x,y]\subseteq [x,x+\kappa(x)]\subseteq I$, where $x<y$, there is some $[a,b]\subseteq [a,a+\lambda(a)]\subseteq J$, such that $F(M_{[x,y]})\cong N_{[a,b]}$, $F (M_{(x,y]})\cong N_{(a,b]}$, $F( M_{[x,y)})\cong N_{[a,b)}$, and $F (M_{(x,y)})\cong N_{(a,b)}$.
\end{lem}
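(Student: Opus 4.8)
The plan is to pin down $F(M_{[x,y]})$ first and then propagate the ``open--close'' information to the other three modules through short exact sequences. Since $M_{[x,y]}$ is indecomposable and not simple (its socle is a proper subobject), Lemma~\ref{lem:equivalence}(3) and~(5) force $F(M_{[x,y]})$ to be an indecomposable, non-simple object of $(J,\lambda)\pwf$; by Theorem~\ref{barcode decom} it is therefore an interval module $N_V$ with $\inf V<\sup V$. The key observation is that the two invariants ``$\soc$ is simple'' and ``$\Top$ is simple'' detect exactly whether the right, respectively left, endpoint of the supporting interval is closed, and that $F$ preserves both invariants by Lemma~\ref{lem:equivalence}(4).

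First I would record this socle/top dictionary. The simple objects here are the point modules $M_{\{p\}}$, and $M_{\{p\}}$ is a subobject of $M_U$ iff $\{p\}$ is upward closed in $U$, i.e.\ $p=\sup U\in U$, while it is a quotient object iff $\{p\}$ is downward closed, i.e.\ $p=\inf U\in U$. Hence for $x<y$: $M_{[x,y]}$ and $M_{(x,y]}$ have simple socle $M_{\{y\}}$ whereas $M_{[x,y)}$ and $M_{(x,y)}$ have zero socle; dually $M_{[x,y]}$ and $M_{[x,y)}$ have simple top $M_{\{x\}}$ whereas $M_{(x,y]}$ and $M_{(x,y)}$ have zero top. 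The same computation in $(J,\lambda)\pwf$ shows that among $N_{[a,b]}, N_{(a,b]}, N_{[a,b)}, N_{(a,b)}$ the closed module $N_{[a,b]}$ is the unique one with both socle and top simple, with $\soc N_{[a,b]}=N_{\{b\}}$ and $\Top N_{[a,b]}=N_{\{a\}}$. Applying this to $M_{[x,y]}$, whose socle $M_{\{y\}}$ and top $M_{\{x\}}$ are both simple, gives $V=[a,b]$ with $a<b$, together with $F(M_{\{y\}})\cong N_{\{b\}}$ and $F(M_{\{x\}})\cong N_{\{a\}}$ (these being the only simple sub- and quotient objects of $N_{[a,b]}$).

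Finally I would transport the remaining three types using exactness (Lemma~\ref{lem:equivalence}(1)). The short exact sequences
\[
\begin{gathered}
0\to M_{\{y\}}\to M_{[x,y]}\to M_{[x,y)}\to 0,\qquad 0\to M_{(x,y]}\to M_{[x,y]}\to M_{\{x\}}\to 0,\\
0\to M_{\{y\}}\to M_{(x,y]}\to M_{(x,y)}\to 0
\end{gathered}
\]
become, after applying $F$ and inserting the identifications above, $0\to N_{\{b\}}\to N_{[a,b]}\to F(M_{[x,y)})\to 0$, then $0\to F(M_{(x,y]})\to N_{[a,b]}\to N_{\{a\}}\to 0$, and finally $0\to N_{\{b\}}\to N_{(a,b]}\to F(M_{(x,y)})\to 0$. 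Since a monomorphism out of a simple lands in the socle and an epimorphism onto a simple is the top quotient (each relevant $\Hom$-space being at most one-dimensional by Lemma~\ref{lem:left lem}, so these maps are the socle inclusions and top projections up to scalar), computing the displayed cokernels and kernels yields $F(M_{[x,y)})\cong N_{[a,b)}$, $F(M_{(x,y]})\cong N_{(a,b]}$, and $F(M_{(x,y)})\cong N_{(a,b)}$. The containment $[a,b]\subseteq[a,a+\lambda(a)]\subseteq J$ is then automatic: compatibility gives $[a,b]\subseteq[t,t+\lambda(t)]$ for some $t\le a$, and since $s\mapsto s+\lambda(s)$ is increasing we get $b\le t+\lambda(t)\le a+\lambda(a)$, while $[a,a+\lambda(a)]\subseteq J$ by the defining property of $\lambda$. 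The only genuine content is the socle/top dictionary; once it is in place the rest is routine diagram chasing, so I expect the main (and fairly minor) obstacle to be verifying cleanly that the point modules exhaust the simples and that $F$ identifies socles and tops on the nose.
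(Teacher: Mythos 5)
Your proposal is correct and follows essentially the same route as the paper: use preservation of indecomposability together with preservation of top and socle (Lemma~\ref{lem:equivalence}(4),(5)) to conclude $F(M_{[x,y]})\cong N_{[a,b]}$ for a closed interval, then apply exactness of $F$ to short exact sequences such as $0\to M_{(x,y]}\to M_{[x,y]}\to M_{[x,x]}\to 0$ to identify the images of the other three interval types as kernels/cokernels. Your write-up is merely more explicit than the paper's (spelling out the socle/top dictionary, the one-dimensionality of the relevant Hom-spaces, and the containment $[a,b]\subseteq[a,a+\lambda(a)]\subseteq J$), but the underlying argument is the same.
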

\begin{proof}
Since equivalence functors preserve indecomposability, $F(M_{[x,y]})$ is an indecomposable representation in $(J,\lambda)\pwf$. The equivalence functors also preserve the top and socle of a representation. Hence $F(M_{[x,y]})\cong N_{[a,b]}$ for some closed interval $[a,b]$. 

Because equivalent functors are exact, applying $F$ to the exact sequence $0\to M_{(x,y]}\to M_{[x,y]}\to M_{[x,x]}\to 0$, we have $F(M_{(x,y]})\cong \ker (N_{[a,b]}\to N_{[a,a]})\cong N_{(a,b]}$. The other statements follow in a similar manner.
\end{proof}

A similar conclusion also holds for indecomposable string modules.
\begin{lem}\label{lem:circle equivalence on strings}
Let $F:(\bS^1,\kappa)\pwf\to(\bS^1,\lambda)\pwf$ be an equivalence of categories where $\kappa$ and $\lambda$ are Kupisch functions.
Then for any $[x,y]\subseteq [x,x+\kappa(x)]$, where $x<y$, there is some $[a,b]\subseteq [a,a+\lambda(a)]$, such that $F(\overline M_{[x,y]})\cong \overline N_{[a,b]}$, $F (\overline M_{(x,y]})\cong \overline N_{(a,b]}$, $F( \overline M_{[x,y)})\cong \overline N_{[a,b)}$, and $F (\overline M_{(x,y)})\cong \overline N_{(a,b)}$.
\end{lem}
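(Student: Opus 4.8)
The plan is to transplant the proof of the preceding interval lemma to the circle, using string modules in place of interval modules together with the exactness of the push-down. First I would observe that, since $F$ preserves indecomposability (Lemma~\ref{lem:equivalence}(5)), $F(\overline M_{[x,y]})$ is an indecomposable object of $(\bS^1,\lambda)\pwf$; by Definition~\ref{defn:Kupisch}(2) such an object has no band summand, so by Theorem~\ref{decom s1} it is a string module $\overline N_W$ for some bounded interval $W$, compatible with $\lambda$.

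The key point is to pin down the endpoint behaviour of $W$ via tops and socles. I would first record that $\overline M_{[x,y]}$ has simple top $\overline M_{[x,x]}$ and simple socle $\overline M_{[y,y]}$: the interval $[x,y]$ is a single directed (counterclockwise) path, whose unique source is $x$ and unique sink is $y$, even when the path wraps around $\bS^1$ (the self-overlap of the support creates no new sources or sinks). Concretely, the push-downs of the $\RR$-surjection $M_{[x,y]}\twoheadrightarrow M_{[x,x]}$ and the $\RR$-inclusion $M_{[y,y]}\hookrightarrow M_{[x,y]}$ realize this top and socle. More generally, a monotone string module $\overline N_W$ has a simple top (resp.\ simple socle) precisely when $W$ contains its left (resp.\ right) endpoint, so having both a simple top and a simple socle forces $W$ to be a closed interval $[a,b]$. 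Since $F$ preserves simple objects and the top and socle (Lemma~\ref{lem:equivalence}(3)--(4)), I conclude $F(\overline M_{[x,y]})\cong \overline N_{[a,b]}$ with $F(\overline M_{[x,x]})\cong \overline N_{[a,a]}$ and $F(\overline M_{[y,y]})\cong \overline N_{[b,b]}$; the containment $[a,b]\subseteq[a,a+\lambda(a)]$ follows from compatibility together with the monotonicity of $K$.

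It remains to read off the half-open and open cases from short exact sequences, exactly as in the interval argument. Pushing down the exact sequences $0\to M_{(x,y]}\to M_{[x,y]}\to M_{[x,x]}\to 0$ and $0\to M_{[y,y]}\to M_{[x,y]}\to M_{[x,y)}\to 0$ (the push-down is exact, being computed as pointwise direct sums) gives the corresponding sequences of string modules on $\bS^1$. Applying the exact functor $F$ (Lemma~\ref{lem:equivalence}(1)) and using the identifications above yields $F(\overline M_{(x,y]})\cong\ker(\overline N_{[a,b]}\to \overline N_{[a,a]})\cong \overline N_{(a,b]}$ and $F(\overline M_{[x,y)})\cong\operatorname{coker}(\overline N_{[b,b]}\to \overline N_{[a,b]})\cong \overline N_{[a,b)}$; a further socle sequence $0\to \overline M_{[y,y]}\to \overline M_{(x,y]}\to \overline M_{(x,y)}\to 0$ then gives $F(\overline M_{(x,y)})\cong \overline N_{(a,b)}$.

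I expect the main obstacle to be the second paragraph, namely the analysis of the top and socle of string modules on $\bS^1$ when $[x,y]$ wraps around (that is, when $\kappa(x)\ge 1$), where the support of $\overline M_{[x,y]}$ overlaps itself. One must verify that there is still a unique source and a unique sink, so that the top and socle remain simple and sit at the endpoints, and that this is exactly what characterizes closed $W$; once this structural fact is in place, the remainder is the same formal bookkeeping with exactness as on the line.
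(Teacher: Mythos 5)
Your proposal is correct and follows essentially the same route as the paper: the paper proves the interval version (indecomposability plus preservation of top and socle forces a closed interval $[a,b]$, then exactness applied to the short exact sequences $0\to M_{(x,y]}\to M_{[x,y]}\to M_{[x,x]}\to 0$, etc., handles the remaining three cases) and then asserts the circle version as the analogous argument for string modules, which is exactly what you carry out. Your extra care in the wrap-around case ($y-x\geq 1$) — checking that the push-down $\overline M_{[x,y]}$ is still uniserial with simple top $\overline M_{[x,x]}$ and simple socle $\overline M_{[y,y]}$, so that closedness of the interval is detected by having both a simple top and a simple socle — is a correct filling-in of the detail the paper leaves implicit.
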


\subsection{Proofs of the Theorems}
Now we proceed to prove the converse to Theorems~\ref{interval equiv}~and~\ref{iso1} in the case that there are no separation points.
Recall that for a pre-Kupisch function $\kappa$, $K(t) = \kappa(t)+t$.

\begin{thm}\label{R equiv 2}
Let $F:(I,\kappa)\pwf\to(J,\lambda)\pwf$ be an equivalence of categories where $\kappa$ and $\lambda$ are pre-Kupisch functions on intervals $I$ and $J$, respectively, such that $\cS(\kappa)=\cS(\lambda)=\emptyset$.
Then $F$ induces an orientation preserving homeomorphism $f\in\Homeo_+(I,J)$ such that $\lambda=f_* \kappa$.
\end{thm}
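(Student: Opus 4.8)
The plan is to recover $f$ from the action of $F$ on \emph{simple} objects and then pin down its analytic behaviour using projectives. The simple objects of $(I,\kappa)\pwf$ are exactly the point modules $M_{\{x\}}$ with $x\in I$, and likewise those of $(J,\lambda)\pwf$ are the $N_{\{w\}}$ with $w\in J$. Since $F$ is an equivalence it preserves simple objects (Lemma~\ref{lem:equivalence}(3)) and induces a bijection on isomorphism classes of simples, so I define $f\colon I\to J$ by $F(M_{\{x\}})\cong N_{\{f(x)\}}$. Essential surjectivity of $F$ makes $f$ surjective, and since an equivalence reflects isomorphisms, $f$ is injective; hence $f$ is a bijection.

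Next I would show $f$ is strictly increasing. For $x<y$ with $[x,y]\subseteq[x,K(x)]$, the module $M_{[x,y]}$ has top $M_{\{x\}}$ and socle $M_{\{y\}}$ (read off from Lemma~\ref{lem:left lem}). By the lemma on open/closed type proved just above, $F(M_{[x,y]})\cong N_{[a,b]}$ for some closed interval, and because $F$ preserves top and socle (Lemma~\ref{lem:equivalence}(4)) we must have $a=f(x)$ and $b=f(y)$; as $M_{[x,y]}$ is non-simple, $a<b$, so $f(x)<f(y)$. This yields monotonicity only for \emph{admissible} pairs $y\le K(x)$, and upgrading to all of $I$ is where the hypothesis $\cS(\kappa)=\emptyset$ is indispensable. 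Given arbitrary $x<y$ in $I$, Proposition~\ref{s cor} gives that the strictly increasing sequence $K^n(x)$ tends to $\sup(I)$ (or $+\infty$); since $I$ has no maximum (Remark~\ref{rmk:pre-Kupisch function}(2)) we have $y<\sup(I)$, so the least $n\ge 1$ with $y\le K^n(x)$ exists and produces a chain
\begin{displaymath}
x=K^0(x)<K^1(x)<\cdots<K^{n-1}(x)<y,
\end{displaymath}
in which each consecutive pair is admissible. Applying the previous step along the chain gives $f(x)<f(y)$. Thus $f$ is an increasing bijection between intervals, hence a homeomorphism by Proposition~\ref{prop:interval bijection}, i.e.\ $f\in\Homeo_+(I,J)$.

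It remains to identify $\lambda$ with $f_*\kappa$, which by Definition~\ref{def:push-forward} reduces to the conjugacy relation $f\circ K=L\circ f$, where $L(u):=\lambda(u)+u$. Here I would use projectives: $M_{[s,K(s)]}$ is a closed projective indecomposable of $(I,\kappa)\pwf$ (Remark~\ref{rmk:pre-Kupisch function}(4)), and $F$ preserves projectives (Lemma~\ref{lem:equivalence}(7)). Since $s<K(s)$, the second paragraph gives $F(M_{[s,K(s)]})\cong N_{[f(s),\,f(K(s))]}$; being a closed projective indecomposable of $(J,\lambda)\pwf$, it must equal $N_{[f(s),\,L(f(s))]}$, forcing $f(K(s))=L(f(s))$ for every $s\in I$. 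Substituting $t=f(s)$ into $f_*\kappa(t)=f(K(f^{-1}(t)))-t$ then yields $f_*\kappa=\lambda$. I expect the genuine obstacle to be the globalization of monotonicity in the second paragraph: local monotonicity, bijectivity, and the conjugacy identity are formal consequences of the properties of equivalences, whereas passing from admissible pairs to all of $I$ is exactly the place where $\cS(\kappa)=\emptyset$ is essential and where the statement genuinely fails in the disconnected case (one can otherwise permute orthogonal components).
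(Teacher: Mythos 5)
Your proposal is correct and follows essentially the same route as the paper's proof: define $f$ on simple modules, prove bijectivity from the equivalence, globalize monotonicity by chaining along the iterates $K^n(x)$ (which is exactly where $\cS(\kappa)=\emptyset$ enters, via Proposition~\ref{s cor}), and extract the conjugacy $f(K(s))=L(f(s))$, i.e.\ $\lambda=f_*\kappa$, from preservation of projectives and of top/socle. The only difference is cosmetic: for the local step on admissible pairs you invoke top/socle preservation together with the open/closed-type lemma, whereas the paper applies $F$ to a chain of nonzero morphisms and uses the left-intersection criterion (Lemma~\ref{lem:left lem}); both are sound, and yours arguably makes the strictness $f(x)<f(y)$ more explicit.
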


\begin{proof}
We denote by $M_{U}$ and $N_{V}$ interval modules of  $(I,\kappa)\pwf$ and $(J,\lambda)\pwf$, respectively.   

Since the equivalence functor $F$ preserves simple representations, for any simple representation $M_{[x,x]}$, $F(M_{[x,x]})\cong N_{[a,a]}$ for some $a\in J$. Define $f(x)=a$ and we are going to show $f:I\to J$ is an orientation preserving homeomorphism. 

The fact $f$ is a bijection follows immediately from Lemma~\ref{lem:equivalence}(3) and the fact that $F$ is dense. 

If $x<y$ in $I$, then there exists a natural number $n$ such that $K^{n-1}(x)<y\leq K^n(x)$.
So, there is a sequence of nonzero morphisms in $(I,\kappa)\pwf$:
 $$M_{[y,y]}\to M_{[K^{n-1}(x),y]}\to  M_{[K^{n-2}(s),K^{n-1}(x)]}\to\cdots\to M_{[x,K(x)]}\to M_{[x,x]} .$$
Since the equivalence functor $F$ also preserves indecomposable representations, for any interval module $M_U$, $F(M_U)=N_{V}$ for some interval $V\subseteq J$. Applying $F$ to the above sequence morphisms, we have a sequence of nonzero morphisms in $(J,\lambda)\pwf$:
$$N_{[f(y),f(y)]}\to N_{V_{n-1}}\to N_{V_{n-2}}\to \cdots\to N_{V_0}\to N_{[f(x),f(x)]}.$$

Hence, $[f(x),f(x)]\cap_L V_0\neq \emptyset$, $V_0\cap_LV_1\neq \emptyset, \cdots, V_{n-1}\cap_L [f(y),f(y)]\neq \emptyset$, which implies $f(x)\leq f(y)$. So, $f$ is increasing and therefore $f\in\Homeo_+(I,J)$.

Now we show $\lambda= f_*\kappa$.
Notice $F$ preserves the top and socle of any interval module: $F(M_{[a,b]})=N_{[f(a),f(b)]}$.
Moreover, since $F$ also preserves projective representations. For each indecomposable projective $M_{[t,\kappa(t)+t]}$, $F(M_{[t,t+\kappa(t)]})=N_{[f(t), f(t)+\lambda(f(t))]}$. So we have $f(t+\kappa(t))=f(t)+\lambda(f(t))$, which means $\lambda= f_*\kappa$.

\end{proof}

\begin{thm}\label{S1 equiv 2}
Let $F:(\bS^1,\kappa)\pwf\to(\bS^1,\lambda)\pwf$ be an equivalence of categories where $\kappa$ and $\lambda$ are Kupisch functions such that $\cS(\kappa)=\cS(\lambda)=\emptyset$.
Then $F$ induces an orientation preserving  homeomorphism $f\in\Homeo_+(\bS^1)$ such that $\lambda=f_* \kappa$.
\end{thm}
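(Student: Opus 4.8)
The plan is to reduce Theorem~\ref{S1 equiv 2} to its linear counterpart, Theorem~\ref{R equiv 2}, by lifting the circle equivalence $F$ to an equivalence between the corresponding $\RR$-categories and then invoking the push-forward characterization already established there. Concretely, since $\cS(\kappa)=\emptyset$, Theorem~\ref{I connect} (via its $\bS^1$-version) tells us that $(\bS^1,\kappa)\pwf$ and $(\bS^1,\lambda)\pwf$ are connected, and the real categories $(\RR,\kappa)\pwf$, $(\RR,\lambda)\pwf$ are connected as well. My first step is to recover, from the equivalence $F$ on the circle, enough combinatorial data to define a candidate map $f\in\Homeo_+(\bS^1)$. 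By Lemma~\ref{lem:circle equivalence on strings}, $F$ sends a simple string module $\overline M_{[x,x]}$ to a simple string module $\overline N_{[a,a]}$; this lets me define $\widetilde f$ on lifts by declaring $\widetilde f(x)=a$ for a chosen representative, exactly mirroring the definition of $f$ in the proof of Theorem~\ref{R equiv 2}.

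The central task is then to show this assignment is well-defined and increasing modulo the $\ZZ$-action, so that it descends to a degree-$1$ circle homeomorphism. Here I would use Corollary~\ref{cor:S1 hom space} and Lemma~\ref{lem:n left lem} to translate the existence of nonzero morphisms between string modules into the combinatorial condition $\bigl|\{i: U\cap_L(V+i)\neq\emptyset\}\bigr|$, and run the same order-detecting argument as in Theorem~\ref{R equiv 2}: given $x<y$ within a single fundamental domain, build the chain of nonzero morphisms $\overline M_{[y,y]}\to\cdots\to\overline M_{[x,K(x)]}\to\overline M_{[x,x]}$ through the projective strings, apply $F$ (which preserves socle, top, indecomposability, and projectivity by Lemma~\ref{lem:equivalence}), and read off $\widetilde f(x)\leq\widetilde f(y)$ from the surviving left-intersections on the target side. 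Because $\kappa$ has no separation points, Proposition~\ref{s cor} guarantees $\lim_{n\to\infty}K^n(x)=+\infty$, so every pair $x<y$ is reached by finitely many iterates of $K$ and the chain has finite length; this is what makes the morphism-chain argument go through uniformly on the circle.

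Once $f\in\Homeo_+(\bS^1)$ is in hand with lift $\widetilde f$, the identity $\lambda=f_*\kappa$ follows exactly as in the linear case: $F$ preserves indecomposable projectives, so $F(\overline M_{[t,t+\kappa(t)]})\cong\overline N_{[f(t),f(t)+\lambda(f(t))]}$, and matching socles via $\widetilde f(t+\kappa(t))=\widetilde f(t)+\lambda(\widetilde f(t))$ gives precisely $\lambda=\widetilde f_*\kappa=f_*\kappa$ by Definition~\ref{def:circle push-forward}. Alternatively, and perhaps more cleanly, I would invoke Lemma~\ref{lem:push-down of equivalence} in reverse spirit: the equivalence $F$ should lift to an equivalence $\widetilde F:(\RR,\kappa)\pwf\to(\RR,\lambda)\pwf$ commuting with the $\ZZ$-translation $(-)^{(1)}$, so that Theorem~\ref{R equiv 2} produces an orientation-preserving $g\in\Homeo_+(\RR)$ with $\lambda=g_*\kappa$, and the degree-$1$ compatibility forces $g$ to be a genuine lift of a circle homeomorphism.

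The main obstacle I anticipate is establishing that $\widetilde f$ respects the $\ZZ$-periodicity and has degree exactly $1$, equivalently that the lift of $F$ to the universal cover is compatible with translation by $1$. On the circle, a simple string module $\overline M_{[x,x]}$ depends only on $x\bmod 1$, so the naive definition $\widetilde f(x)=a$ is only determined up to the $\ZZ$-ambiguity in choosing lifts; I must check that consistently choosing lifts yields $\widetilde f(x+1)=\widetilde f(x)+1$ rather than some other integer shift. This is where the connectedness hypothesis $\cS(\kappa)=\emptyset$ is essential: it ensures $(\bS^1,\kappa)\pwf$ is connected (so $F$ cannot permute disjoint orthogonal components and scramble the cyclic order), and it guarantees the morphism chains reach all the way around, pinning down the winding number of $f$ to be $1$. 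Handling this periodicity bookkeeping carefully — rather than the order-preservation, which is routine once the linear case is set up — is the crux of the argument.
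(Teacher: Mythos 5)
Your construction of $\tilde f$ from images of simples and your final step (matching indecomposable projectives to get $\tilde f(t+\kappa(t))=\tilde f(t)+\lambda(\tilde f(t))$, hence $\lambda=f_*\kappa$) both agree with the paper. But the middle step — monotonicity of $\tilde f$ — is where your proposal has a genuine gap, and it is exactly the step you dismiss as ``routine once the linear case is set up.'' The hom-chain argument from Theorem~\ref{R equiv 2} does not transplant to the circle: by Lemma~\ref{lem:n left lem}, $\Hom_{\bS^1}(\overline M_U,\overline M_V)\cong\bigoplus_{i\in\ZZ}\Hom_\RR(M_U,M_{V+i})$, so a nonzero morphism between string modules only certifies that \emph{some} integer translate of $V$ has nonempty left intersection with $U$; it carries no information about relative position mod $1$. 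Indeed, since $\cS(\kappa)=\emptyset$ the iterates $K^n$ wrap around the circle, so for \emph{any} two points $x,y$ there are chains of nonzero morphisms from $\overline M_{[y,y]}$ to $\overline M_{[x,x]}$ \emph{and} from $\overline M_{[x,x]}$ to $\overline M_{[y,y]}$. Applying $F$ to such a chain therefore yields no inequality between $\tilde f(x)$ and $\tilde f(y)$. The paper's actual mechanism is different: for $0\le s<t<1$ with $t\le K(s)$, the module $\overline M_{[s,t]}$ is a \emph{brick} (Corollary~\ref{cor:S1 brick}), equivalences preserve bricks (Lemma~\ref{lem:equivalence}(6)), so the image $\overline N_{[a,b]}$ satisfies $b-a<1$; combined with the choice of representatives in a fundamental domain this leaves only two cases for the integer ambiguity, and the order-reversing case is killed by a subfactor argument (Lemma~\ref{lem:equivalence}(8)) that forces $s=0$ and a contradiction. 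Only after this local claim is established does the no-separation-point hypothesis enter, to chain the claim along $s<K(s)<K^2(s)<\cdots$ across $[0,1)$. In short: on $\bS^1$ it is the \emph{dimensions} of Hom/End spaces (brick and endomorphism-length data, cf.\ Corollary~\ref{cor:S1 hom space} and Lemma~\ref{F interval}), not the mere existence of morphisms, that detect order; your proposal relies on the latter. By contrast, the periodicity $\tilde f(t+1)=\tilde f(t)+1$ that you single out as the crux is immediate once one defines $\tilde f$ on $[0,1)$ with values in $[h,h+1)$ and extends by translation, as in Construction~\ref{circle f}.

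Your suggested ``cleaner'' alternative is also not available: lifting $F$ to an equivalence $\widetilde F:(\RR,\kappa)\pwf\to(\RR,\lambda)\pwf$ commuting with $(-)^{(1)}$ is precisely the content of Corollary~\ref{cor:kupisch eq lift}, which the paper \emph{deduces from} Theorem~\ref{S1 equiv 2} (via the homeomorphism $\tilde f$ and Theorem~\ref{interval equiv}). Lemma~\ref{lem:push-down of equivalence} only descends functors from $\RR$ to $\bS^1$; there is no result in the paper, and no standard covering-theory fact, that lets you ascend from an equivalence of the orbit categories to a $\ZZ$-equivariant equivalence upstairs without first producing something like $\tilde f$. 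So that route is circular.
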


Before we prove the theorem we provide a useful construction and some technical results.

We denote by $\overline M_{U}$ and $\overline N_{V}$ interval modules of  $(\bS^1,\kappa)\pwf$ and $(\bS^1,\lambda)\pwf$ respectively.  
\begin{construction}\label{circle f}
  Since the equivalence functor $F$ preserves simple objects, there is a unique $h\in[0,1)$ such that $F(\overline{M}_{[0,0]})=\overline{N}_{[h,h]}$.
  For all $n\in\ZZ$, define $\tilde f(n) = h+n$.
  
  For each $t\in (0,1)$ there is a unique $b\in(h,h+1)$ such that
  $F(\overline M_{[t,t]})\cong \overline N_{[b,b]}$.
  For all $t\in(0,1)$ and $n\in\ZZ$, define $\tilde f(t+n) = b+n$.
  Define $f:\bS^1\to\bS^1$ as $f(e^{2\pi i t})=e^{2\pi i b}$, for each $t\in[0,1)$.
  
  Notice that $\tilde f$ is a lift of $f$.
\end{construction}
  Since $F$ is an equivalence, $f$ is a bijection. 
  In the following, we show that $f\in \Homeo_+(\bS^1)$ by showing $\tilde f:\RR\to \RR$ is an increasing bijection (we cannot apply the homotopy lifting theorem directly, since \emph{a priori} we do not know $f$ is continuous).

\begin{lem}\label{g increasing}
Let $\tilde f:\RR\to \RR$ be the function defined in Construction~\ref{circle f}. Then 
\begin{enumerate}
\item $\tilde f(t+1)=\tilde f(t)+1$.
\item $\tilde f$ is a bijection.
\item $\tilde f$ is increasing. 
\end{enumerate}
\end{lem}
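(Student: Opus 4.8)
The plan is to establish the three properties of $\tilde f$ in sequence, using the periodicity of the Kupisch functions to handle the first claim, the density and faithfulness of the equivalence $F$ for the second, and the homomorphism-preservation properties of $F$ (together with the left-intersection machinery of Corollary~\ref{cor:S1 hom space}) for the third.

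For part (1), I would use that $\kappa$ is a Kupisch function, hence $1$-periodic, so the translation $(-)^{(1)}$ is an autoequivalence of $(\bS^1,\kappa)\pwf$ that fixes the push-down picture. Concretely, $\overline M_{[t+1,t+1]} = \overline M_{[t,t]}$ as $\bS^1$-string modules, since $[t+1,t+1]$ and $[t,t]$ differ by the $\ZZ$-action and the push-down identifies them. Thus $F(\overline M_{[t+1,t+1]}) = F(\overline M_{[t,t]}) = \overline N_{[b,b]}$, which forces the representative in $[h,h+1)$ to agree. The definition $\tilde f(t+n)=b+n$ then gives $\tilde f(t+1)=\tilde f(t)+1$ directly from Construction~\ref{circle f}; this is essentially bookkeeping built into the definition.

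For part (2), I would argue that $\tilde f$ is a bijection by reducing to a bijection on a fundamental domain. Because $F$ preserves simple objects (Lemma~\ref{lem:equivalence}(3)) and is dense, the assignment $\overline M_{[t,t]}\mapsto \overline N_{[b,b]}$ gives a well-defined bijection between the simple objects of the two categories, i.e.\ between the points of $\bS^1$. Since the simples of $(\bS^1,\kappa)\pwf$ are exactly the $\overline M_{[t,t]}$ for $t\in[0,1)$ and likewise for $\lambda$, the induced map $f:\bS^1\to\bS^1$ is a set-theoretic bijection; combined with property (1), the lift $\tilde f$ is then a bijection of $\RR$.

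For part (3) — which I expect to be the main obstacle — I would show $\tilde f$ is increasing by an argument parallel to the one in Theorem~\ref{R equiv 2}. Given $s<t$ within a suitable range, I would produce a chain of nonzero morphisms among string modules connecting the simple $\overline M_{[s,s]}$ to $\overline M_{[t,t]}$ (passing through projectives $\overline M_{[x,K(x)]}$ and using the iterates $K^n$), apply $F$, and read off via Corollary~\ref{cor:S1 hom space} and Lemma~\ref{lem:circle equivalence on strings} that the images $\overline N_{[f(s),f(s)]}, \overline N_{[f(t),f(t)]}$ are connected by nonzero morphisms in the correct direction, forcing the left-intersection conditions that yield $\tilde f(s)\le \tilde f(t)$. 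The genuine difficulty here, compared with the $\RR$-case, is the wrap-around on $\bS^1$: the left-intersection condition $U\cap_L(V+i)\ne\emptyset$ can be satisfied by a nonzero $i$, so nonzero $\Hom$ between string modules no longer pins down the order of points on the nose. I would therefore need to work on a fundamental interval of length less than $1$ (using $\cS(\kappa)=\emptyset$ and Proposition~\ref{s cor} to guarantee the needed chains stay controlled), ensuring the relevant intervals are short enough that $i=0$ is the only contributing translate, so that the order information survives. Once $\tilde f$ is shown to be increasing on such an interval, periodicity from part (1) propagates monotonicity to all of $\RR$, completing the proof that $\tilde f$ is an increasing bijection and hence $f\in\Homeo_+(\bS^1)$.
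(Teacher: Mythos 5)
Parts (1) and (2) of your proposal are correct and essentially identical to the paper's argument: periodicity is built into the definition of $\tilde f$ in Construction~\ref{circle f}, and bijectivity follows from $F$ being a dense equivalence preserving simples, restricted to fundamental domains.

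The gap is in part (3), precisely at the point you flag, and your proposed fix does not close it. Choosing the source intervals inside a fundamental domain, short enough that ``$i=0$ is the only contributing translate,'' controls the \emph{number} of contributing translates (equivalently, the dimensions of the relevant Hom spaces), but not \emph{which} translate contributes after $F$ is applied. All you know about the images is that each consecutive pair of string modules $\overline N_{W_{j+1}}, \overline N_{W_j}$ in your chain has a one-dimensional Hom space; this datum is invariant under sliding the entire configuration of image arcs around the circle, so it cannot detect where those arcs sit relative to the basepoint $h=\tilde f(0)$ of the chosen lift. Concretely, the ``wrap-around'' configuration, in which going counterclockwise from $f(s)$ to $f(t)$ one crosses $h$ (so that $\tilde f(t)<\tilde f(s)$ in $[h,h+1)$), produces exactly the same chain of nonzero morphisms with the same Hom dimensions as the desired configuration; no left-intersection bookkeeping among the chain modules can distinguish the two, because none of those modules involves the basepoint. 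Your chain argument therefore only recovers cyclic-order information, which is compatible with both $\tilde f(s)\le\tilde f(t)$ and $\tilde f(t)<\tilde f(s)$.

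This is why the paper does not argue via chains of Homs at this step. It applies $F$ to the single module $\overline M_{[s,t]}$, uses $t-s<1$ and preservation of bricks (Lemma~\ref{lem:equivalence}(6)) to force the image arc $[a,b]$ to have length $<1$, reducing the ambiguity to two cases ($i-j\in\{0,-1\}$ in the paper's notation), and then kills the wrap-around case $i-j=-1$ by an argument anchored at the basepoint: in that case $\overline N_{[h,h]}=F(\overline M_{[0,0]})$ would be a subfactor of $\overline N_{[a,b]}$, hence by preservation of subfactors (Lemma~\ref{lem:equivalence}(8)) $\overline M_{[0,0]}$ would be a subfactor of $\overline M_{[s,t]}$, forcing $s=0$ and a contradiction with $\tilde f(s)>\tilde f(0)$. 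Some device of this kind, tying the image arcs to the basepoint through composition-factor (not Hom) data, is indispensable and is what your proposal is missing. Your remaining steps --- iterating $K$ using $\cS(\kappa)=\emptyset$ and Proposition~\ref{s cor} to pass from the local claim to monotonicity on $[0,1)$, then using (1) to extend to all of $\RR$ --- coincide with the paper's, so once the local claim is repaired your reduction goes through.
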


\begin{proof}

(1) follows immediately from the definition.

(2) Since $f$ is a bijection, restricting on each interval $[n,n+1)$, the function $\tilde f|:[n,n+1)\to [\tilde{f}(0)+n,\tilde{f}(0)+n+1)$ is a bijection. Hence $\tilde f:\RR\to \RR$ is a bijection.

(3) First, we prove $\tilde f$ is increasing locally. 

{\bf Claim:} For any $0\leq s<t<1$ and $t\leq K(s)= s+\kappa(s)$, it follows that $\tilde f(s)\leq \tilde f(t)$.

Indeed, consider the module $\overline M_{[s,t]}$ in $(\bS^1,\kappa)\pwf$. According to Lemma \ref{lem:circle equivalence on strings}, $F\overline M_{[s,t]}\cong\overline N_{[a,b]}$, for some interval $[a,b]$. Since $F$ preserves top and socle, $F\overline M_{[s,s]}\cong\overline N_{[a,a]}$ and $F\overline M_{[t,t]}\cong\overline N_{[b,b]}$. Hence $\tilde f(s)=a-i$ and $\tilde f(t)=b-j$ for some $i,j\in\mathbb Z$.

Next, because $0\leq t-s<1$, $\overline M_{[s,t]}$ is a brick.
Hence, by Lemma~\ref{lem:equivalence}(6), $\overline N_{[a,b]}$ is also a brick.
This implies $0\leq b-a<1$ and so $i-j\leq \tilde f(t)-\tilde f(s)<i-j+1$. But, $|\tilde f(t)-\tilde f(s)|<1$, since both $\tilde f(s)$ and $\tilde f(t)\in [\tilde{f}(0),\tilde{f}(0)+1)$.
Thus, $i-j=-1$ or $0$.

If $i-j=0$, it follows that  $\tilde f(s)\leq \tilde f(t)$.

We show $i-j$ cannot be $-1$ by finding a contradiction.
If $i-j=-1$, it follows that $\tilde f(t)< \tilde f(s)\leq\tilde f(t)+1$. Also from the definition of $\tilde f$, we have an inequlity:
$$
\tilde{f}(0)\leq \tilde f(t)<\tilde f(s)<\tilde{f}(0)+1\leq \tilde f(t)+1.
$$
Notice that, in this case, $\overline N_{[a,b]}\cong\overline N_{[\tilde f(s),\tilde f(t)+1]}$ and, from the above inequality, $\overline N_{[\tilde{f}(0)+1,\tilde{f}(0)+1]}\cong \overline N_{[\tilde{f}(0),\tilde{f}(0)]}$ is a subfactor
of $\overline N_{[a,b]}$.
Therefore $\overline M_{[0,0]}$ must be a subfactor of $\overline M_{[s,t]}$, which forces $s=0$. But then $\tilde f(s)=\tilde f(0)$ contradicts the above inequality.

Second, we prove that $\tilde f$ is increasing on $[0,1)$.
Since $\cS(\kappa)=\emptyset$, for $0\leq s<t<1$, there is a natural number $m$ such that $K^{m}(s)<t\leq K^{m+1}(s)$. Apply the claim for pairs $K^{i-1}(s)$ and $ K^i(s)$ for $i=1,\cdots, m$ and for the pair $K^m(s)$ and $t$, we have $\tilde f(s)\leq \tilde f(K(s))\leq \cdots\leq \tilde f(K^{m}(s))\leq \tilde f(t)$. Therefore $\tilde f$ is increasing on $[0,1)$.

Last, combining (1) with the fact that $\tilde f$ is increasing on $[0,1)$, it follows that $\tilde f$ is increasing on $\RR$.
\end{proof}

\begin{cor}[to Lemma~\ref{g increasing}]
 The map $f$ defined in Construction \ref{circle f} is an orientation preserving homeomorphism in $\Homeo_+(\bS^1)$ with lift $\tilde f\in\Homeo_+(\RR)$.
\end{cor}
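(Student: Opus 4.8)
The plan is to assemble the pieces already established in Lemma~\ref{g increasing} and then invoke Proposition~\ref{prop:interval bijection} to upgrade monotonicity and bijectivity of $\tilde f$ into the statement that $\tilde f$ is a homeomorphism. First I would observe that by Lemma~\ref{g increasing}(2) and (3), the map $\tilde f:\RR\to\RR$ is an increasing bijection, so Proposition~\ref{prop:interval bijection} immediately gives that $\tilde f$ is continuous and hence a homeomorphism; thus $\tilde f\in\Homeo_+(\RR)$. Combined with the degree-$1$ relation $\tilde f(t+1)=\tilde f(t)+1$ recorded in Lemma~\ref{g increasing}(1), this shows $\tilde f$ satisfies exactly the defining conditions for the lift of an orientation-preserving circle homeomorphism.

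Next I would descend these properties to $f$ through the covering map $p(t)=e^{2\pi i t}$. By Construction~\ref{circle f}, $\tilde f$ is a lift of $f$, meaning $f\circ p = p\circ \tilde f$. Since $p$ is a continuous open surjection (in particular a quotient map) and $p\circ\tilde f$ is continuous, the universal property of quotient maps yields that $f$ is continuous. The map $f$ is already known to be a bijection because $F$ is an equivalence, and $\bS^1$ is compact Hausdorff, so any continuous bijection $\bS^1\to\bS^1$ is automatically a homeomorphism; this spares us a separate verification that $f^{-1}$ is continuous. Finally, since the lift $\tilde f$ of $f$ is strictly increasing, $f$ is orientation-preserving by definition, which gives $f\in\Homeo_+(\bS^1)$ with lift $\tilde f\in\Homeo_+(\RR)$, as claimed.

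Since the substantive work was carried out in Lemma~\ref{g increasing}, no genuine obstacle remains at this stage. The only points requiring a little care are confirming that continuity of $f$ descends correctly from $\tilde f$ through the covering map (which follows because $p$ is open), and recognizing that the inverse comes for free from compactness of $\bS^1$ rather than needing an independent covering-space argument.
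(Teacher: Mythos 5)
Your proposal is correct and follows essentially the same route as the paper: the paper treats this corollary as immediate from Lemma~\ref{g increasing} (which gives that $\tilde f$ is an increasing bijection satisfying $\tilde f(t+1)=\tilde f(t)+1$) together with Proposition~\ref{prop:interval bijection} and the definition of an orientation-preserving circle homeomorphism as one whose lift is increasing. Your extra details on descending continuity through the covering map and using compactness of $\bS^1$ are correct standard facts that the paper leaves implicit.
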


\begin{lem}\label{F interval}
If $\overline M_{[s,t]}$ is a string module in $(\bS^1,\kappa)\pwf$, then $F(\overline M_{[s,t]})\cong\overline N_{[\tilde f(s),\tilde f(t)]}$.
\end{lem}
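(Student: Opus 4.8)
The plan is to pin down the closed interval $[a,b]$ for which $F(\overline M_{[s,t]})\cong\overline N_{[a,b]}$ (such an interval exists by Lemma~\ref{lem:circle equivalence on strings}) by first locating both endpoints modulo $\ZZ$, and then resolving the remaining winding ambiguity with an endomorphism-dimension count. First I would record that $\overline M_{[s,t]}$ is uniserial, being a quotient of the uniserial projective $\overline M_{[s,s+\kappa(s)]}$, so its top is the simple $\overline M_{[s,s]}$ and its socle is the simple $\overline M_{[t,t]}$; the same reasoning gives $\overline N_{[a,b]}$ top $\overline N_{[a,a]}$ and socle $\overline N_{[b,b]}$. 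Since $F$ preserves top and socle (Lemma~\ref{lem:equivalence}(4)) and sends the simple $\overline M_{[x,x]}$ to $\overline N_{[\tilde f(x),\tilde f(x)]}$ by Construction~\ref{circle f}, comparing tops and socles yields $a\equiv\tilde f(s)$ and $b\equiv\tilde f(t)\pmod 1$. As $\overline N_{[a,b]}\cong\overline N_{[a+n,b+n]}$ for every $n\in\ZZ$, I may normalize the representative so that $a=\tilde f(s)$; then $b=\tilde f(t)+m$ for some integer $m$, and it remains to show $m=0$.

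To eliminate $m$ I would compute endomorphism dimensions. Using Corollary~\ref{cor:S1 hom space} together with the description of $\Hom$ between interval $\RR$-modules (Lemma~\ref{lem:left lem}), the contributing translates are those $i$ with $-(t-s)\le i\le 0$, so
\[
\dim_\bk\End_{\bS^1}(\overline M_{[s,t]})=|\{i\in\ZZ: -(t-s)\le i\le 0\}|=\lfloor t-s\rfloor+1,
\]
and likewise $\dim_\bk\End_{\bS^1}(\overline N_{[a,b]})=\lfloor b-a\rfloor+1$. Because $F$ is an equivalence it induces an isomorphism of endomorphism algebras, whence $\lfloor t-s\rfloor=\lfloor b-a\rfloor$. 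On the other hand, since $\tilde f$ is a strictly increasing bijection with $\tilde f(x+1)=\tilde f(x)+1$ (Lemma~\ref{g increasing}), writing $n=\lfloor t-s\rfloor$ and using $\tilde f(s)+n=\tilde f(s+n)\le\tilde f(t)<\tilde f(s+n+1)=\tilde f(s)+n+1$ gives $\lfloor\tilde f(t)-\tilde f(s)\rfloor=\lfloor t-s\rfloor$. Combining these with $b-a=\tilde f(t)-\tilde f(s)+m$ forces $\lfloor t-s\rfloor=\lfloor\tilde f(t)-\tilde f(s)\rfloor+m=\lfloor t-s\rfloor+m$, so $m=0$ and $b=\tilde f(t)$.

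The only genuinely delicate point is this resolution of the integer $m$: the top--socle comparison determines the endpoints only modulo $\ZZ$, so one needs an invariant sensitive to how many times the string wraps the circle, and the endomorphism dimension $\lfloor\,\cdot\,\rfloor+1$ is exactly such an invariant, preserved by $F$ and compatible with $\tilde f$ through its degree-one property. I would double-check the two boundary computations — the exact range of $i$ contributing to $\End(\overline M_{[s,t]})$, and the strictness of $\tilde f(t)<\tilde f(s+n+1)$ when $t<s+n+1$ — since an off-by-one there would propagate directly into $m$. Everything else (uniseriality, preservation of top and socle, and the value of $\tilde f$ on simples) is already available from the cited results, and the brick case $t-s<1$ treated in the Claim of Lemma~\ref{g increasing} is subsumed here as the instance $n=0$.
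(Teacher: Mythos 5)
Your proof is correct and follows essentially the same route as the paper: both pin the endpoints down modulo $\ZZ$ via preservation of top and socle, then kill the integer ambiguity by comparing $\dim_\bk\End_{\bS^1}(\overline M_{[s,t]})=\lfloor t-s\rfloor+1$ with $\dim_\bk\End_{\bS^1}(\overline N_{[a,b]})$ and using that $\tilde f$ is increasing of degree one, so that $\lfloor \tilde f(t)-\tilde f(s)\rfloor=\lfloor t-s\rfloor$. The only cosmetic differences are that you normalize $a=\tilde f(s)$ while the paper carries both integers $i,j$ and shows $i=j$, and that you spell out the endomorphism-dimension formula from Corollary~\ref{cor:S1 hom space} rather than asserting it.
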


\begin{proof}
Suppose $F(\overline M_{[s,t]})=\overline N_{[a,b]}$. Then $a=\tilde f(s)+i$ and $b=\tilde f(t)+j$. It suffice to show $i=j$.

Suppose $d\leq t-s<d+1$ for some integer $d$. This is equivalent to say $\dim_{\bk}\End_{\bS^1}(\overline M_{[s,t]})=d+1$. Hence $\dim_{\bk}\End_{\bS^1}(\overline N_{[a,b]})=d+1$ and therefore $b-a\in[d,d+1)$. That is
$$
 \tilde f(t)-\tilde f(s)\in[d+i-j, d+1+i-j).
$$

Now, since $s+d\leq t<s+d+1$ and $\tilde f$ is increasing, we have $$\tilde f(s)+d=\tilde f(s+d)\leq \tilde f(t)<\tilde f(s+d+1)=\tilde f(s)+d+1.$$ That is
$
 \tilde f(t)-\tilde f(s)\in[d,d+1).
$
This forces $i-j=0$, which finishes the proof.
\end{proof}

\noindent {\bf Proof of Theorem \ref{S1 equiv 2}}

It remains to show $\lambda=f_*\kappa$. Notice the equivalence $F$ preserves indecomposable projective representations. According to Lemma \ref{F interval} $F(\overline M_{[x,x+\kappa(x)]})\cong\overline N_{[\tilde f(x), \tilde f(x+\kappa(x))]}\cong \overline N_{[\tilde f(x), \tilde f(x)+\lambda(\tilde f(x))]}$. So, $ \tilde f(x+\kappa(x))=\tilde f(x)+\lambda(\tilde f(x))$. That is, $$\lambda(t)=\tilde f(\tilde f^{-1}(t)+\kappa\circ\tilde f^{-1}(t))-t.$$
By definition, $\lambda=f_*\kappa$. \hfill$\Box$

\begin{cor}[to Theorem~\ref{S1 equiv 2}]\label{cor:kupisch eq lift}
If  $F:(\bS^1,\kappa)\pwf\to(\bS^1,\lambda)\pwf$ is an equivalence, then $F$ induces an equivalence functor $\widetilde{F}:(\RR,\kappa)\pwf\to(\RR,\lambda)\pwf$ such that $F(\overline M_U)=\overline{\widetilde{F}(M_U)}$ for each interval module $\overline M_U\in(\bS^1,\kappa)\pwf$.
\end{cor}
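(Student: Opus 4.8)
The plan is to read off the homeomorphism produced by Theorem~\ref{S1 equiv 2}, lift it, and feed the lift into Theorem~\ref{interval equiv}. Throughout we keep the standing hypothesis $\cS(\kappa)=\cS(\lambda)=\emptyset$ of Theorem~\ref{S1 equiv 2}. That theorem (via Construction~\ref{circle f} and Lemma~\ref{g increasing}) furnishes an orientation-preserving homeomorphism $f\in\Homeo_+(\bS^1)$ with increasing lift $\tilde f\in\Homeo_+(\RR)$ satisfying $\lambda=f_*\kappa$; by Definition~\ref{def:circle push-forward} this is the same as $\lambda=\tilde f_*\kappa$ as functions on $\RR$. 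Since $\tilde f\in\Homeo_+(\RR)$ then carries $\kappa$ to $\lambda$, Theorem~\ref{interval equiv} applied to $\tilde f$ produces an equivalence $\widetilde F:(\RR,\kappa)\pwf\to(\RR,\lambda)\pwf$ whose action on interval modules is given explicitly by $\widetilde F(M_U)=M_{\tilde f(U)}$. This is the functor I claim satisfies the required compatibility.

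It then remains to verify $F(\overline M_U)\cong\overline{\widetilde F(M_U)}$ for every interval module. Since $\widetilde F(M_U)=M_{\tilde f(U)}$ and the push-down of an $\RR$-interval module is the corresponding string module, we have $\overline{\widetilde F(M_U)}=\overline N_{\tilde f(U)}$, where $\tilde f(U)$ carries the same open--closed endpoint data as $U$ because $\tilde f$ is an increasing homeomorphism. For a closed interval $U=[s,t]$ the desired identity $F(\overline M_{[s,t]})\cong\overline N_{[\tilde f(s),\tilde f(t)]}$ is precisely Lemma~\ref{F interval}, so the closed case is immediate.

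To settle the remaining half-open and open cases I would argue exactly as in Lemma~\ref{lem:circle equivalence on strings}: apply the exact functor $F$ (Lemma~\ref{lem:equivalence}(1)) to the short exact sequences relating the various endpoint types, for instance $0\to\overline M_{(s,t]}\to\overline M_{[s,t]}\to\overline M_{[s,s]}\to 0$. Because $F$ preserves socles and tops and already sends $\overline M_{[s,t]}$ and $\overline M_{[s,s]}$ to $\overline N_{[\tilde f(s),\tilde f(t)]}$ and $\overline N_{[\tilde f(s),\tilde f(s)]}$ respectively, passing to kernels (and, for the other types, cokernels) identifies $F(\overline M_{(s,t]})\cong\overline N_{(\tilde f(s),\tilde f(t)]}$, and similarly for $[s,t)$ and $(s,t)$. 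In each case the result matches $\overline{\widetilde F(M_U)}=\overline N_{\tilde f(U)}$, which completes the object-level verification.

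Finally, I would record that $\widetilde F$ preserves indecomposables and commutes with the shift, i.e. $\widetilde F\circ(-)^{(1)}=(-)^{(1)}\circ\widetilde F$, because $\tilde f$ has degree $1$ (Lemma~\ref{g increasing}(1)); this makes $\widetilde F$ eligible for Construction~\ref{construction:push-down} and Lemma~\ref{lem:push-down of equivalence}, so that its push-down is again an equivalence of the $\bS^1$-categories agreeing with $F$ on all string modules. The only genuine work is the bookkeeping of endpoint types in the previous paragraph; everything else is a direct assembly of Theorems~\ref{S1 equiv 2} and~\ref{interval equiv}. The main obstacle, such as it is, is ensuring that the lift $\tilde f$ extracted in Construction~\ref{circle f} is the \emph{same} one governing both the closed-interval computation in Lemma~\ref{F interval} and the formula $\widetilde F(M_U)=M_{\tilde f(U)}$ coming from Theorem~\ref{interval equiv} --- which it is, by construction, since $\lambda=\tilde f_*\kappa$ forces this choice.
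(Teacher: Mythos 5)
Your proposal is correct and follows essentially the same route as the paper's own (very terse) proof: extract the increasing lift $\tilde f\in\Homeo_+(\RR)$ from Construction~\ref{circle f} via Lemma~\ref{g increasing}, let Theorem~\ref{interval equiv} produce $\widetilde F$ with $\widetilde F(M_U)=M_{\tilde f(U)}$, and match it against $F$ using Lemma~\ref{F interval} together with Lemma~\ref{lem:push-down of equivalence}. Your extra bookkeeping of endpoint types (via the exact sequences, as in Lemma~\ref{lem:circle equivalence on strings}) just makes explicit what the paper leaves implicit.
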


\begin{proof}
This follows since $\widetilde{f}$ in Construction~\ref{circle f} is in $\Homeo_+(\RR)$, by Lemmas~\ref{lem:push-down of equivalence}~and~\ref{g increasing}.
\end{proof}

 We make some remarks about Theorem \ref{S1 equiv 2} in the cases when the Kupisch functions have at least one separation points.  First, let us fix some notations: for any map $T:[0,1)\to[0,1)$,  we can extend it to $\widetilde T:\RR\to\RR$ by defining ${\widetilde T}(t)=T(t)+n$ for $t\in [n,n+1)$, $n\in\mathbb Z$ and push it down to $\bar T:\bS^1\cong\RR/\mathbb Z\to \RR/\mathbb Z$ by defining $\bar T([t])=[\widetilde T(t)]$, where $[x]$ stands for the coset $x+\mathbb Z \in \RR/\mathbb Z$.
\begin{rmk}\label{rmk:interval exchange}
(1) When $\cS(\kappa)\cap[0,1)$ is a finite set,  Theorem \ref{S1 equiv 2} can be modified using interval exchanging maps. That is, the equivalence $F:(\bS^1,\kappa)\pwf\to(\bS^1,\lambda)\pwf$ induces a homeomorphism $f:\bS^1\to\bS^1$ such that for some interval exchanging map $T:[0,1)\to[0,1)$, $\bar T\circ f$ is orientation preserving and $\lambda=[(\bar T\circ f)_* \kappa]\circ \widetilde T$. 
(See an example of interval exchanging map in Figure~\ref{fig:interval exchange} and see \cite{V} for the formal definition.)
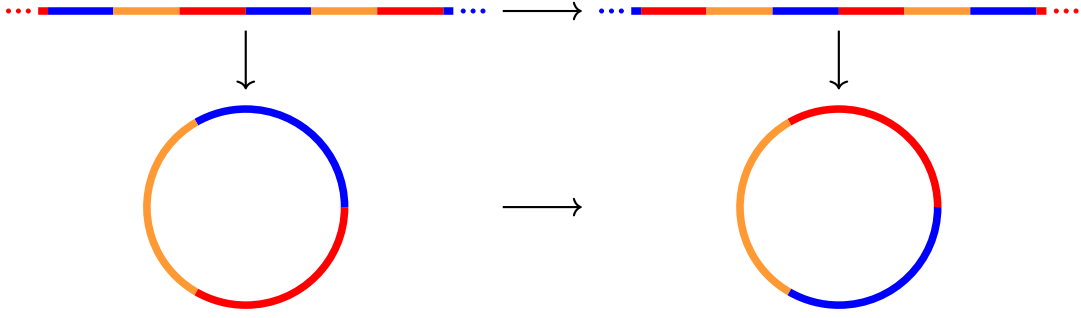
\begin{figure}[h]
    \centering
    \begin{tikzpicture}[scale=1.3]
        \foreach \x in {0,2}
        {
            \draw[line width = 1mm, blue] (\x ,3) -- (\x+.66 ,3);
            \draw[line width = 1mm, orange!80!white] (\x+.66 ,3) -- ( \x+1.33 ,3);
            \draw[line width = 1mm, red] (\x+1.33 ,3) -- ( \x+2 ,3);
        }
        \draw[line width = 1mm, red] (0 ,3) -- (-0.1 ,3);
        \draw[line width = 1mm, blue] (4 ,3) -- (4.1 ,3);
        \foreach \x in {0.1, 0.2, 0.3}
        {
            \filldraw[fill=red, draw=red] (-0.1-\x ,3) circle[radius=.2mm];
            \filldraw[fill=blue, draw=blue] (4.1+\x ,3) circle[radius=.2mm];
        }
        \foreach \x in {6,8}
        {
            \draw[line width = 1mm, red] (\x ,3) -- (\x+.66 ,3);
            \draw[line width = 1mm, orange!80!white] (\x+.66 ,3) -- ( \x+1.33 ,3);
            \draw[line width = 1mm, blue] (\x+1.33 ,3) -- ( \x+2 ,3);
        }
        \draw[line width = 1mm, blue] (6 ,3) -- (5.9 ,3);
        \draw[line width = 1mm, red] (10 ,3) -- (10.1 ,3);
        \foreach \x in {0.1, 0.2, 0.3}
        {
            \filldraw[fill=blue, draw=blue] (5.9-\x ,3) circle[radius=.2mm];
            \filldraw[fill=red, draw=red] (10.1+\x ,3) circle[radius=.2mm];
        }
        \draw[line width = 1mm, blue] (3,1) arc (0:120:1);
        \draw[line width = 1mm, orange!80!white] (1.5,1.866) arc (120:240:1);
        \draw[line width = 1mm, red] (3,1) arc (360:240:1);
        \draw[line width = 1mm, red] (9,1) arc (0:120:1);
        \draw[line width = 1mm, orange!80!white] (7.5,1.866) arc (120:240:1);
        \draw[line width = 1mm, blue] (9,1) arc (360:240:1);
        \foreach \y in {3,1}
            \draw[thick, ->] (4.6,\y) -- (5.4,\y);
        \foreach \x in {2,8}
            \draw[thick, ->] (\x,2.8) -- (\x,2.2);
.,.m     \end{tikzpicture}
    \caption{An example of an interval exchanging map for intervals that are periodic in $\RR$ as in Remark~\ref{rmk:interval exchange}.
    We push the interval exchange down to the circle using the periodicity.
    The orange interval is fixed.
    The red and blue intervals are swapped.}\label{fig:interval exchange}
\end{figure}

(2) In particular, since when $|\cS(\kappa)\cap[0,1)|\leq 2$, $\bar T$ is just a circle rotation, and $\widetilde T=x+\theta$, for some rotation number $\theta=T(0)$. Together with Example \ref{s1 pushforward_exm} (1), this implies $\lambda(t)=[(\bar T\circ f)_* \kappa (t+\theta)$ is still a push-forward of $\kappa$ by an orientation-preserving homeomorphism. 
\end{rmk}

\begin{exm}\label{n piece} Let $n\in\NN\setminus\{0\}$ and let $f_n:  \RR^{\geq 0}\to [0,\frac{1}{n})$, given by $t\mapsto \frac{t}{n(1+t)}$ be an orientation preserving homeomorphism.
Let $\lambda(t)=t+1$ be a pre-Kupisch function and define a Kupisch function $\kappa_n(t)=(f_*\lambda)(t-\frac{k}{n})$ for $\frac{k}{n}\leq t<\frac{k+1}{n}, k\in\ZZ$.
Explicitly, 
 $\kappa_n(t)= \frac{k+1}{2n}-\frac{t}{2}$ for $\frac{k}{n}\leq t<\frac{k+1}{n}, k\in\ZZ$.
We see that $\cS(\kappa_n)=\frac{1}{n}\mathbb Z$.
The category $(\mathbb S^1,\kappa_n)\pwf$ is disconnected for $n>1$.
See Figure~\ref{fig:disconnected S} for a picture of the orthogonal components in $(\bS^1,\kappa_n)\pwf$, for $n\in\{1,2,3\}$.
\begin{figure}[h]
    \centering
    \begin{tikzpicture}[scale=1.3]
        \draw[thick] (1,0) arc (0:180:1) arc (180:360:0.95);
        \filldraw[fill=black] (1,0) circle[radius=.4mm];
        \filldraw[fill=white] (.9,0) circle[radius=.4mm];
        \draw (0,-1) node [anchor=north] {$(\bS^1,\kappa_1)\pwf$};
        \draw[thick] (4.05,0) arc (0:180:1);
        \draw[thick] (1.95,0) arc (180:360:1);
        \foreach \x in {4.05, 1.95}
            \filldraw[fill=black] (\x,0) circle[radius=.4mm];
        \foreach \x in {2.05,3.95}
            \filldraw[fill=white] (\x,0) circle[radius=.4mm];
        \draw (3,-1) node [anchor=north] {$(\bS^1,\kappa_2)\pwf$};
        \draw[thick] (7.1,0) arc (0:122:1.035);
        \draw[thick] (5.45,0.952) arc (120:242:1.035);
        \draw[thick] (5.45,-0.952) arc (240:362:1.035);
        \filldraw[fill=black] (7.1,0) circle[radius=.4mm];
        \filldraw[fill=white] (7,0) circle[radius=.4mm];
        \filldraw[fill=black] (5.45,0.952) circle[radius=.4mm];
        \filldraw[fill=white] (5.5,0.866) circle[radius=.4mm];
        \filldraw[fill=black] (5.45,-0.952) circle[radius=.4mm];
        \filldraw[fill=white] (5.5,-0.866) circle[radius=.4mm];
        \draw (6,-1) node [anchor=north] {$(\bS^1,\kappa_3)\pwf$};
    \end{tikzpicture}
    \caption{Orthogonal components for Kupisch functions $\kappa_n$ in Example~\ref{n piece}, where $n\in\{1,2,3\}$.}
    \label{fig:disconnected S}
\end{figure}
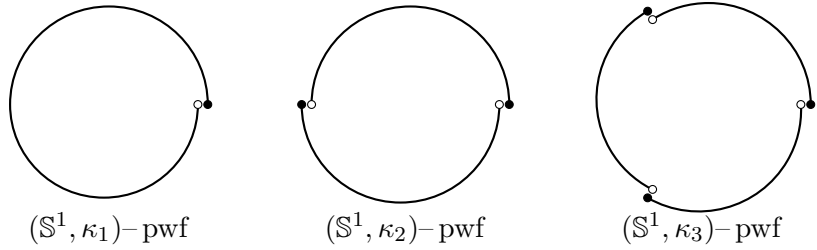
By Theorem \ref{S1 components}, $(\bS^1,\kappa_n)\pwf$ is equivalent to the direct sum of $n$ copies of $(\RR^{\geq 0},\lambda)\pwf.$

There is an auto-equivalence $F:(\bS^1,\kappa_3)\pwf\to (\bS^1,\kappa_3)\pwf$ defined the following way on objects:
$$
F\overline M_U=
\begin{cases}
\overline M_{U-\frac{1}{3}} & U\subseteq [0,\frac{1}{3})\\
\overline M_{U} & U\subseteq [\frac{1}{3},\frac{2}{3})\\
\overline M_{U+\frac{1}{3}} & U\subseteq [\frac{2}{3},1).
\end{cases}
$$
\end{exm}

Consider the interval exchanging map 
$$
T(t)=\begin{cases}
t-\frac{1}{3} & t\in [0,\frac{1}{3})\\
t & t\in [\frac{1}{3},\frac{2}{3})\\
t+\frac{1}{3} & t\in [\frac{2}{3},1).
\end{cases}
$$ 
 Restricting the functor $F$ on the simple objects induces a circle map $f=\bar T$, which is not an orientation preserving homeomorphism. It is easy to verify that $\bar T^2=\id$ and  $\kappa(t)=\kappa(\widetilde T(t))$.

\vskip 5pt

According to Theorem \ref{R connect} and Theorem \ref{S1 components}, the orthogonal components of categories $(\RR,\kappa)\pwf$ or $(\bS^1,\lambda)\pwf$ are of the following forms:
\begin{enumerate}[label=(\roman*)]
\item $(\RR^{\geq 0},\mu)\pwf$ where $\cS(\mu)=\emptyset$;
\item $(\RR,\nu)\pwf$ where $\cS(\nu)=\emptyset$; and
\item $(\bS^1, \lambda)\pwf$ where $\cS(\lambda)=\emptyset$.
\end{enumerate}

We verify that categories of the forms (i), (ii), (iii) above are pairwise non-equivalent:
 
\begin{prop}\label{prop:types of categories}
Let  $\mu$, $\nu$ be pre-Kupisch functions on $\RR^{\geq 0}$ and $\RR$, respectively, and $\lambda$ be a Kupisch function. Assume $\cS(\lambda)=\cS(\mu)=\cS(\nu)=\emptyset$. Then categories (i) $(\RR^{\geq 0},\mu)\pwf$, (ii) $(\RR,,\nu)\pwf$ and (iii) $(\bS^1, \lambda)\pwf$  are mutually non-equivalent. 
\end{prop}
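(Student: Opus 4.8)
The plan is to isolate two properties of these categories that are preserved by any equivalence and which, taken together, separate the three families. Throughout I use that an equivalence preserves simple, indecomposable, projective and injective objects, isomorphism classes, and (non)vanishing of Hom-spaces (Lemma~\ref{lem:equivalence}), so any property phrased purely in these terms is an equivalence invariant. The two invariants are: \textbf{(A)} the category contains a simple injective object; and \textbf{(B)} among the indecomposable projective objects there is a directed cycle meeting at least two distinct isomorphism classes, i.e.\ there exist non-isomorphic indecomposable projectives $P\not\cong Q$ together with chains of nonzero morphisms $P\to\cdots\to Q$ and $Q\to\cdots\to P$. I will show (A) holds \emph{only} for type (i) and (B) holds \emph{only} for type (iii). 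Since both are equivalence invariants, (A) separates (i) from (ii) and (iii), while (B) separates (iii) from (i) and (ii); this yields mutual non-equivalence.

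For \textbf{(A)} I use that a simple submodule of an interval (or string) module is its socle, which sits at the right endpoint, so the simple $S_x$ at $x$ embeds only into modules $M_{[a,x]}$ (resp.\ $\overline M_{[a,x]}$) compatible with the function. Hence $S_x$ is injective iff it is a proper submodule of no indecomposable, i.e.\ iff no compatible interval with right endpoint $x$ and left endpoint $a<x$ occurs. In type (i) the boundary forces this at $0$: the only compatible interval with right endpoint $0$ is $\{0\}$, so $M_{[0,0]}$ is simple injective and (A) holds. In types (ii) and (iii) the hypothesis $\cS(\kappa)=\emptyset$ produces a proper extension of \emph{every} simple: if $\lim_{u\to x^-}\kappa(u)>0$ then $K(u)>x$ for $u$ just below $x$; and if $\lim_{u\to x^-}\kappa(u)=0$ then, since $x$ is not a separation point, condition~(2) of Definition~\ref{def:separation pt} must fail, giving some $t<x$ with $K(t)\ge x$. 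Either way a compatible $M_{[t,x]}$ (resp.\ its push-down) with $t<x$ contains $S_x$ properly, so no simple is injective and (A) fails.

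For \textbf{(B)} write $P_t$ for the indecomposable projective $M_{[t,K(t)]}$. By Lemma~\ref{lem:left lem} (and Corollary~\ref{cor:S1 hom space} on the circle) there is a nonzero morphism $P_s\to P_t$ precisely when $t\le s\le K(t)$, up to an integer translate on the circle; in particular every such arrow weakly decreases the parameter. On the line this forces the endpoints of any directed path $P_a\to\cdots\to P_b$ to satisfy $a\ge b$, so no directed cycle can meet two distinct objects and (B) fails for (i) and (ii). Conversely, on the line and its lift a directed path $P_a\to\cdots\to P_b$ exists whenever $a\ge b$: by Proposition~\ref{s cor} we have $K^N(b)\to\infty$, so taking $N$ minimal with $K^N(b)\ge a$ (whence $K^{N-1}(b)<a\le K^N(b)$) the chain $P_a\to P_{K^{N-1}(b)}\to\cdots\to P_b$ has every arrow valid. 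On the circle, periodicity gives $\overline M_{[s+1,K(s+1)]}=\overline M_{[s,K(s)]}$; choosing any $t\in(s,s+1)$, the concatenated lift-path $P_{s+1}\to\cdots\to P_t\to\cdots\to P_s$ pushes down — each nonzero morphism staying nonzero via the $i=0$ summand of Lemma~\ref{lem:n left lem} — to a directed cycle based at $\overline M_{[s,K(s)]}$ passing through the distinct class $\overline M_{[t,K(t)]}$. Hence (B) holds for (iii), completing the three separations.

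The main obstacle is the circle construction in (B). On the line, acyclicity is immediate from the monotonicity of the parameter, but on the circle one must produce a genuinely \emph{finite} directed cycle through distinct indecomposable projectives, and a naive step-by-step descent around the circle can fail to terminate when $\kappa$ approaches $0$. The hypothesis $\cS(\kappa)=\emptyset$, through Proposition~\ref{s cor}, is exactly what guarantees that the forward $K$-orbit overshoots $s+1$ after finitely many iterations, closing the cycle; the remaining verifications — that an intermediate parameter in $(s,s+1)$ yields a distinct isomorphism class and that push-down preserves non-vanishing of the chain's morphisms — are routine. (When $\kappa(t)\ge 1$ for some $t$ one could alternatively distinguish (iii) from (ii) by exhibiting a non-brick indecomposable via Corollary~\ref{cor:S1 brick}, which is impossible on the line where every indecomposable is a brick by Lemma~\ref{lem:left lem}; but the cycle argument handles all cases uniformly.)
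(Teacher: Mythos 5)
Your two-invariant strategy is sound, and it is in part a genuinely different route from the paper's. The paper separates (i) from (ii) by invoking the heavy converse result Theorem~\ref{R equiv 2}: an equivalence would induce an increasing bijection $\RR^{\geq 0}\to\RR$, which does not exist. Your invariant (A) --- existence of a simple injective object, namely $M_{[0,0]}$ at the boundary point --- achieves this separation (and simultaneously separates (i) from (iii)) by elementary means, using only that equivalences preserve simples and injectives (Lemma~\ref{lem:equivalence}(3),(7)); this is more self-contained than the paper, which needs the full strength of Theorem~\ref{R equiv 2} here. For separating (iii) from (i) and (ii), however, your invariant (B) is essentially the paper's own mechanism: the paper exhibits in $(\bS^1,\lambda)\pwf$ a finite cycle of nonzero non-isomorphisms through the simple $\overline M_{[0,0]}$, built by iterating $L$ until $L^N(0)\geq 1$ (via Proposition~\ref{s cor}) and wrapping once around the circle, and notes that no such cycle exists on the line because nonzero morphisms between interval modules weakly decrease left endpoints; your directed cycle through the indecomposable projectives $\overline M_{[t,K(t)]}$ along the $K$-orbit is the same wrap-around construction with projectives in place of the simple.

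Two steps need shoring up, though both are routine. First, in (A) you assert that a simple is injective if and only if it is a proper submodule of no indecomposable. The ``only if'' direction (all you use for (ii) and (iii)) is immediate, but for (i) you need the ``if'' direction, which is not a formal triviality in an abelian category with infinite direct sums. It can be settled either by a pushout argument (push the monomorphism $A\hookrightarrow B$ out along $A\to S$; the induced map from $S$ to the pushout is monic, the pushout decomposes into indecomposables by Krull--Remak--Schmidt, $S$ embeds into one summand, and by hypothesis that embedding is an isomorphism, giving the desired retraction $B\to S$), or more simply by direct computation: for any $A$ in $(\RR^{\geq 0},\mu)\pwf$ one has $\Hom(A,M_{[0,0]})\cong A(0)^{*}$ naturally, monomorphisms are pointwise injective, and vector-space duality is exact, so $M_{[0,0]}$ is injective. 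Second, in (B) your acyclicity argument on the line treats only the projectives $M_{[t,K(t)]}$, but by Remark~\ref{rmk:pre-Kupisch function}(4) the half-open modules $M_{(t,K(t)]}$ are also indecomposable projectives, so a directed cycle could \emph{a priori} pass through them. The argument extends: by Lemma~\ref{lem:left lem} a nonzero morphism between any two of these projectives still weakly decreases the left endpoint, where an interval open at its left endpoint $t$ is counted as lying strictly to the right of one closed at $t$, and when the endpoints coincide the only nonzero morphism goes from $M_{(t,K(t)]}$ to $M_{[t,K(t)]}$ and never back; hence a directed cycle still forces all its vertices to be isomorphic. With these two repairs your proof is complete.
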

\begin{proof}

Since $\RR^{\geq 0}$ and $\RR$ are non-homeomoprhic, categories of the forms (i) and (ii) are not equivalent, by Theorem \ref{R equiv 2}.

Next, denote by $L(t)=\lambda(t)+t$. By Proposition~\ref{s cor}, there is $N>0$ such that $1\in(L^N(0),L^{N+1}(0)]$. Then there is a sequence of nonzero non-isomorphisms between indecomposable $\bS^1$-representations compatible with $\lambda$. 
$$
\overline{M}_{[0,0]}\cong\overline{M}_{[1,1]}\to\overline{M}_{[L^N(0),1]}\to\overline{M}_{[L^{N-1}(0),L^{N}(0)]}\to\cdots\to\overline{M}_{[L(0),L^2(0)]}\to\overline{M}_{[0,L(0)]}\to\overline{M}_{[0,0]},
$$
where $\overline{M}_{[0,0]}$ is simple. But there is no such a sequence for any simple $\RR$-representation. Therefore $(\bS^1, \lambda)\pwf$ is not equivalent to any subcategory of $\RR\pwf$, hence not equivalent to categories (i) and (ii)
\end{proof}

Proposition~\ref{prop:types of categories} may not be true if the (pre-)Kupisch functions have separation points, as shown in the following example.

\begin{exm}\label{N=Z}
For all $n\in\ZZ$ and $t\in[n,n+1)$, let $\nu(t)=\frac{n+1-t}{2}$ and let $\mu=\nu|_{[0,\infty)}$. Notice $\nu$ and $\mu$ are pre-Kupisch functions on $\RR$ and $\RR^{\geq 0}$, respectively.
We see that $\cS(\nu)=\mathbb Z$ and $\cS(\mu)=\mathbb N$. The connected orthogonal components of both $(\RR,\nu)\pwf$ and $(\RR^{\geq 0},\mu)\pwf$ are equivalent to $([0,1),\nu|_{[0,1)})\pwf\cong (R^{\geq 0}, \eta)\pwf$, for some pre-Kupisch function $\eta$.

To compute an $\eta$ explicitly, choose a homeomorphism $f:[0,1)\to [0,\infty)$, say $f(t)=\frac{t}{1-t}$, and take the push-forward  $\eta(t)=f_*(\nu|_{[0,1)})=2t+1$.
 Then
$$
(\RR,\nu)\pwf\cong  ((\RR^{\geq 0}, \eta)\pwf)^\mathbb Z\cong  ((\RR^{\geq 0}, \eta)\pwf)^\mathbb N\cong (\RR^{\geq 0},\mu)\pwf.
$$ 
\end{exm}

\subsection{Relation to Dynamical Systems}
One concludes that the classification of categories $(\bS^1,\kappa)\pwf$ and $(\RR,\kappa)\pwf$ up to categorical equivalence reduces to the classification of categories of types  $(\RR,\kappa)\pwf$, $(\RR^{\geq 0},\kappa)\pwf$, or $(\bS^1,\kappa)\pwf$, where $\cS(\kappa)=\emptyset$. We want to point out a further survey about the classification problem is related with topological dynamical systems. 
 
A topological dynamical system $(X,\sigma)$ contains a topological space $X$ and a self-map $\sigma:X\to X$. A topological conjugacy between systems $(X,\sigma)$ and $(Y,\tau)$ is a homeomorphism $f:X\to Y$ such that $f\circ \sigma=\tau\circ f$. When $X$, $Y$ are both intervals or $\bS^1$, we say a topological conjugacy $f$ is positive if it is orientation preserving. 

 Notice that a (pre-)Kupisch function $\kappa$ on $\RR$ or $\RR^{\geq 0}$ gives rise to a dynamical system $(\RR,K(t))$ or $(\RR^{\geq 0}, K(t))$, where $K(t)=\kappa(t)+t$ is the self-map.
And a Kupisch function $\kappa$ induces a circle map  
\begin{eqnarray*}
\bar K:\bS^1&\to& \bS^1 \\
 (e^{2\pi i t})&\mapsto& e^{2\pi i (\kappa(t)+t)},
\end{eqnarray*}
hence a system $(\bS^1, \bar K)$.
One can check that a (pre-)Kupisch function $\lambda=f_*\kappa$ is a push-forward by some orientation preserving homeomorphism $f$ if and only if $f$ is a positive topological conjugacy between the induced dynamical systems. Thus  Theorem \ref{R equiv 2} and Theorem \ref{S1 equiv 2} says that classifying categories $(\bS^1,\kappa)\pwf$ and $(\RR,\kappa)\pwf$ is equivalent to classifying their corresponding dynamical system up to positive topologically conjugacy. See \cite{KH} for more details about circle dynamics.

\section{Continuous and Discrete Nakayama Representations}

In this section, we discuss the relation between discrete and continuous representations. We show that the module category of any discrete Nakayama algebra can be embedded in $(\bS^1,\kappa)\pwf$ for some Kupisch function $\kappa$.

Recall that a basic connected Nakayama algebra $A$ is called {\bf linear} if its $\Ext$-quiver is ${\bf A}_n$: 
$$
\begin{tikzcd}
&0\ar[r]&1\ar[r]&2\ar[r]&\cdots\ar[r]&n-1	
\end{tikzcd}
$$
and {\bf cyclic} if its $\Ext$-quiver is ${\bf C}_n$:

$$
\begin{tikzpicture}[->]
\node(A) at (0,2) {$0$};
\node(B) at (1,1.73) {\tiny $n-1$};
\node (C) at (1.73,1) {\tiny $n-2$};
\node (D) at (2,0) {$\vdots$};
\node(E) at (0,-2) {$\circ$};
\node (E1) at (-1,-1.73) {$\circ$};
\node (E2) at (1,-1.73) {$\circ$};
\node (F) at (-2,0) {$\vdots$};
\node (G) at (-1,1.73) {$1$};
\node(H) at (-1.73,1) {2};
\draw(D)--(C);
\draw(C)--(B);
\draw(B)--(A);
\draw(A)--(G);
\draw(G)--(H);
\draw(H)--(F);
\draw(E1)--(E);
\draw(E)--(E2);
\draw (-1.73,-1)--(E1);
\draw (E2)--(1.73,-1);	
\end{tikzpicture}
$$

Let $A$ be a basic connected Nakayama algebra of rank $n$ (i.e. $A$ has $n$ isomorphism classes of simple modules). The Kupisch series $(l_0,\cdots, l_{n-1})$ is a sequence with $l_i=l(P_i)$ the length of the indecomposable projective module $P_i$. By defninition, $A$ is a (connected) linear Nakayama algebra if and only if $l_{n-1}=1$.  Since any ${\bf A}_n$ representation $M$ can be viewed as a ${\bf C}_n$ representation with $M(n-1)\to M(0)$ being zero, representations of any Nakayama algebra are representations on ${\bf C}_n$.

\begin{defn}\label{associate kupish}
Let $A$ be a basic connected Nakayama algebra with a Kupisch series $(l_0,l_1,\cdots, l_{n-1})$.
We define the \textbf{associated Kupisch function $\kappa_A$} as follows. For any $k\in\mathbb Z$,
$$
\kappa_A(t):=  \frac{i+l_{i}}{n}-t+k,\ \ \frac{i}{n}+k\leq t< \frac{i+1}{n}+k, \text{for $0\leq i<n$}.
$$
\end{defn}

The relation between $A$-modules and $\bS^1$-representations compatible with $\kappa_A$ is described in Lemma \ref{lem:GM} and Remark \ref{rmk:GM}. Notice that the associated Kupisch function $\kappa_A$ has no separation points.

\begin{exm}
Let $A$ be a Nakayama algebra with Kupisch series $(3,3,2)$. Then the associated Kupisch function $\kappa_A$ on $[0,1)$ is:
$$
\kappa_A(t)=\begin{cases}  1-t &0\leq t<\frac{1}{3}\\
\frac{4}{3}-t& \frac{1}{3}\leq t<1 \end{cases}
$$
\end{exm}

The rest of this section is dedicated to proving the following theorem.

\begin{thm}\label{embedding thm}
Let $A$ be a basic connected Nakayama algebra with a Kupisch series $(l_0,l_1,\cdots, l_{n-1})$ and $\kappa_A$ its associated Kupisch function. Then there is an exact embedding $F: A\modd\to (\bS^1, \kappa_A)\pwf$ that preserves projective objects. Moreover, if $A$ is a linear Nakayama algebra, then there is an exact embedding $L:A\modd\to (\RR, \kappa_A)\pwf$, which preserves projective objects.
\end{thm}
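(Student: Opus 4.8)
The plan is to realise each indecomposable $A$-module as a half-open string module supported on the lattice $\tfrac1n\ZZ\subseteq\bS^1$, with the open end at the top and the closed end at the socle, arranged so that the indecomposable projectives land exactly on the projective objects $\overline M_{(t,K(t)]}$. Recall first that, since $A$ is basic connected with $\Ext$-quiver $\mathbf C_n$ (arrows $i\to i+1$), every indecomposable $A$-module is uniserial and is determined by its top $S_i$ and its length $\ell$ with $1\le \ell\le l_i$; write it $M(i,\ell)$, so $P_i=M(i,l_i)$, $S_i=M(i,1)$, $\operatorname{rad}M(i,\ell)=M(i+1,\ell-1)$ and $M(i,\ell)/\operatorname{soc}=M(i,\ell-1)$. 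I would then define $F$ on indecomposables by $F(M(i,\ell)):=\overline M_{(\frac{i}{n},\frac{i+\ell}{n}]}$ and extend additively, using that both categories are Krull--Remak--Schmidt (Remark~\ref{rmk:S kappa is serre subcategory}). By Definition~\ref{associate kupish} one has $K_A(\tfrac{i}{n})=\tfrac{i+l_i}{n}$, so $F(P_i)=\overline M_{(\frac{i}{n},K_A(\frac{i}{n})]}$, which is an indecomposable projective object by (the $\bS^1$-analogue of) Remark~\ref{rmk:pre-Kupisch function}(4); hence $F$ preserves projectives by construction, and $F(S_i)=\overline M_{(\frac{i}{n},\frac{i+1}{n}]}$ is a cell of width $\tfrac1n$ rather than a point (as it must be, since distinct point-simples admit no extensions).

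The next step is functoriality and exactness. A nonzero morphism $M(i,\ell)\to M(i',\ell')$ is scalar, and I would send it to the corresponding scalar morphism of string modules, checking compatibility with composition by reducing to indecomposables and invoking the hom-space identifications of Lemma~\ref{lem:n left lem} and Corollary~\ref{cor:S1 hom space}. For exactness, since $F$ is additive it suffices to check that it preserves kernels and cokernels of morphisms between indecomposables; these are again interval modules and the relevant computation is precisely the lattice image (under $j\mapsto \tfrac{j}{n}$) of the interval calculation in Example~\ref{exm:morphism of RR persistence modules} and in the proof of Theorem~\ref{interval equiv}. Concretely, the radical sequence $0\to M(i+1,\ell-1)\to M(i,\ell)\to S_i\to 0$ and the socle sequence $0\to S_{i+\ell-1}\to M(i,\ell)\to M(i,\ell-1)\to 0$ are both carried to the standard exact sequence $0\to\overline M_{(b,c]}\to\overline M_{(a,c]}\to\overline M_{(a,b]}\to 0$, which is exact because the push-down is a pointwise direct sum over fibres and hence exact.

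Full faithfulness is where I expect the real work. It reduces to matching dimensions on indecomposables and verifying injectivity on hom-spaces. By Corollary~\ref{cor:S1 hom space},
\[
\dim_\bk\Hom_{\bS^1}\big(F M(i,\ell),F M(i',\ell')\big)=\Big|\big\{k\in\ZZ:\ (\tfrac{i}{n},\tfrac{i+\ell}{n}]\cap_L(\tfrac{i'+kn}{n},\tfrac{i'+\ell'+kn}{n}]\neq\emptyset\big\}\Big|,
\]
and the hard part will be to show that this count equals $\dim_\bk\Hom_A\big(M(i,\ell),M(i',\ell')\big)$ \emph{with multiplicity}: for cyclic $A$ with long projectives ($l_i>n$) the string modules wrap around $\bS^1$ several times, so both sides can exceed $1$. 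I would match the two by identifying each admissible integer $k$ with the corresponding $k$-fold ``wrap'' homomorphism arising from the cyclic composition series of $M(i,\ell)$, i.e.\ with the covering-theoretic summand of Lemma~\ref{lem:n left lem}; since distinct values of $k$ yield linearly independent maps, the induced map on hom-spaces is injective and, the dimensions agreeing, bijective. This multiplicity bookkeeping for wrapping modules is the main obstacle.

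Finally, for the linear case ($l_{n-1}=1$) every module $M(i,\ell)$ satisfies $(\tfrac{i}{n},\tfrac{i+\ell}{n}]\subseteq(0,1]$, so no wrapping occurs. The identical formula $L(M(i,\ell)):=M_{(\frac{i}{n},\frac{i+\ell}{n}]}$ defines a functor into $(\RR,\kappa_A)\pwf$; exactness and projective-preservation follow from the same two families of short exact sequences as above, and full faithfulness is immediate because every hom-space between these $\RR$-interval modules is at most one-dimensional by Lemma~\ref{lem:left lem}, with the left-intersection condition matching the existence of a nonzero $A$-module map. Thus $L$ is the desired exact, projective-preserving embedding, completing the linear statement without recourse to the circle.
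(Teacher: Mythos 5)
Your object-level assignment coincides with the paper's: both send the uniserial $A$-module with top $S_i$ and length $\ell$ to $\overline M_{(\frac{i}{n},\frac{i+\ell}{n}]}$, and your treatment of projectives, of the radical/socle exact sequences, and of the linear case is sound. But there is a genuine gap at the heart of the argument: the definition of $F$ on morphisms. Your assertion that ``a nonzero morphism $M(i,\ell)\to M(i',\ell')$ is scalar'' is false precisely in the wrapping case you later worry about. For a cyclic Nakayama algebra with long modules, $\Hom_A\bigl(M(i,\ell),M(i',\ell')\bigr)$ can have dimension greater than $1$: there is one basis vector for each length $m$ with $1\leq m\leq\min(\ell,\ell')$ and $m\equiv i'+\ell'-i \pmod n$, since the length-$m$ quotient of the source is then isomorphic to the length-$m$ submodule of the target (e.g.\ $\dim_\bk\End_A\bigl(M(0,2n)\bigr)=2$ when all $l_i\geq 2n$). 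So ``send the scalar to the corresponding scalar'' does not define $F$ on morphisms; you must choose a basis identification with $\bigoplus_{k}\Hom_\RR(M_U,M_{V+k})$ as in Lemma~\ref{lem:n left lem} and then verify that this identification respects composition. That verification is exactly the ``multiplicity bookkeeping'' you defer and yourself call ``the main obstacle'': as written, the functor is not defined, and functoriality and full faithfulness --- the core of the theorem --- are not proved. A smaller issue: the reduction of exactness to kernels and cokernels of morphisms between indecomposables is unjustified (the kernel of a matrix of maps is not the direct sum of the kernels of its entries), though this is repairable once full faithfulness onto a subcategory closed under kernels and cokernels is established.

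The paper sidesteps all of this by building the functor in the opposite direction. It defines $\cM\subseteq(\bS^1,\kappa_A)\pwf$ as the additive hull of the string modules $\overline M_{(\frac{a}{n},\frac{b}{n}]}$ and an evaluation functor $G:\cM\to A\modd$ restricting a representation to the lattice points $p_i$ and the generating morphisms between them. Functoriality of $G$ is automatic, and full faithfulness is a short naturality argument: the structure maps of such a string module are identities on each cell $(\frac{i}{n},\frac{i+1}{n}]$, so a morphism $f$ with $f(p_i)=0$ for all $i$ vanishes, and conversely any $A$-module map extends cell-by-cell (Lemma~\ref{lem:G eqv}); density is Lemma~\ref{lem:GM}. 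Then $F=\iota\circ G^{-1}$ is exact because $G$ is an equivalence and the inclusion $\iota$ of $\cM$ is exact by construction, so no dimension count and no composition bookkeeping for wrapping modules is ever needed. If you wish to keep your direct construction, the efficient fix is to prove that your $F$ is inverse to this evaluation functor --- at which point you have essentially reproduced the paper's proof.
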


For the convenience of showing functoriality, we need to describe the image of this functor on objects first.
Let $\cM=\add\{\overline M_{(\frac{a}{n},\frac{b}{n}]}\in(\bS^1,\kappa_A)\pwf \mid a,b\in\mathbb N, a<n\}$ be the full subcategory of $(\bS^1,\kappa_A)\pwf$ consisting of direct sums of string modules of the form $\overline M_{(\frac{a}{n},\frac{b}{n}]}$. 

For $0\leq i < n$, denote by $p_i=(\frac{i+1}{n} \mod\mathbb Z)\in \mathbb R/\mathbb Z\cong\bS^1$. 
For each string module $\overline M_{(\frac{a}{n},\frac{b}{n}]}\in\cM$, we assign a representation $G\overline M_{(\frac{a}{n},\frac{b}{n}]}=(X_i,\varphi_i)$ of the quiver ${\bf C}_n$ as the following: $X_{i}=  \overline M_{(\frac{a}{n},\frac{b}{n}]}(p_i)$ for $0\leq i<n$ and $\varphi_i:X_{i}\to X_{i+1}$ is given by $\varphi_i= \overline M_{(\frac{a}{n},\frac{b}{n}]}(g_{p_ip_{i+1}})$, where $g_{p_ip_{i+1}}$ is a generating morphism on $\bS^1$.

\begin{lem}\label{lem:GM}
Let $\overline M_{(\frac{a}{n},\frac{b}{n}]}\in\cM$, then $M=G\overline M_{(\frac{a}{n},\frac{b}{n}]}$ is an indecomposable $A$-module with
$\Top M\cong S_a$ and length $l(M)=b-a$.
\end{lem}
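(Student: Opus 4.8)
The plan is to describe the representation $M = G\overline{M}_{(\frac{a}{n},\frac{b}{n}]}$ explicitly on a basis, read off its length and top directly, and then combine compatibility with $\kappa_A$ and the defining inequality of the Kupisch series to check that $M$ genuinely satisfies the relations of $A$.

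First I would unwind the push-down at each marked point $p_i=\frac{i+1}{n}$. Since $p^{-1}(p_i)\cap(\tfrac{a}{n},\tfrac{b}{n}]=\{\tfrac{m}{n}:a<m\le b,\ m\equiv i+1\ (\mathrm{mod}\ n)\}$, the space $X_i=\overline{M}_{(\frac{a}{n},\frac{b}{n}]}(p_i)$ has as basis the grid points $\tfrac{m}{n}\in(\tfrac{a}{n},\tfrac{b}{n}]$ lying over $p_i$; writing $v_m$ for the basis vector at $\tfrac{m}{n}$, the vector $v_m$ sits at vertex $(m-1)\bmod n$. Summing over $i$ gives total dimension $|\{m:a<m\le b\}|=b-a$, so $l(M)=b-a$. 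Applying the explicit push-down formula for $\overline{M}_U(g_{xy})$ to the generating morphism $g_{p_ip_{i+1}}$ shows that $\varphi_i$ sends $v_m\mapsto v_{m+1}$ when $m+1\le b$ and $v_b\mapsto 0$: the candidate lift $\tfrac{m+1}{n}$ is the unique preimage of $p_{i+1}$ within distance $<1$, and it lies in $U$ exactly when $m+1\le b$. Thus $M$ is the uniserial representation with basis $v_{a+1},\dots,v_b$ on which every arrow acts as the shift $v_m\mapsto v_{m+1}$.

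From this description the top and indecomposability are immediate. The generator $v_{a+1}$ sits at vertex $a$ (as $0\le a<n$) and is not in the image of any $\varphi$, because its only potential preimage $\tfrac{a}{n}$ is excluded from the half-open interval $(\tfrac{a}{n},\tfrac{b}{n}]$; every other $v_m$ is the image of $v_{m-1}$. Hence $\rad M=\langle v_{a+2},\dots,v_b\rangle$ and $\Top M\cong S_a$ is simple, so $M$ is indecomposable by Nakayama's lemma (equivalently, the submodules $\langle v_m,\dots,v_b\rangle$ are totally ordered, so $M$ is uniserial). It remains to verify that $M$ is actually an $A$-module, i.e. annihilated by the relations of $A$. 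Writing $\ell=b-a$, the relation at vertex $j$ requires the length-$l_j$ path from $j$ to act as zero; since the $r$-th basis vector $v_{a+1+r}$ sits at vertex $(a+r)\bmod n$ and a length-$l_j$ path shifts it to $v_{a+1+r+l_j}$, this amounts to the inequalities $l_{(a+r)\bmod n}\ge \ell-r$ for all $0\le r<\ell$. For $r=0$ this reads $\ell\le l_a$, which I would deduce from compatibility: $(\tfrac{a}{n},\tfrac{b}{n}]\subseteq[t,K(t)]$ forces $t\le \tfrac{a}{n}$ and $K(t)\ge\tfrac{b}{n}$, and since $K$ is increasing with $K(\tfrac{a}{n})=\tfrac{a+l_a}{n}$ by Definition~\ref{associate kupish}, we get $\tfrac{b}{n}\le K(\tfrac an)=\tfrac{a+l_a}{n}$, i.e. $\ell\le l_a$. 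The remaining inequalities then follow by induction using the defining Kupisch inequality $l_{i+1}\ge l_i-1$ of the series: $l_{(a+r)\bmod n}\ge l_{(a+r-1)\bmod n}-1\ge(\ell-(r-1))-1=\ell-r$. Hence all relations hold and $M\in A\modd$.

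The main obstacle I anticipate is the first step, namely reading off the push-down $\overline{M}_{(\frac{a}{n},\frac{b}{n}]}$ at the points $p_i$ correctly: keeping track of the vertex labels modulo $n$, of the open and closed endpoints, and making sure the bookkeeping remains valid when $\ell=b-a>n$ (so that some $X_i$ have dimension greater than one, yet the module stays uniserial as an $A$-module). Once the explicit shift description of $M$ is in hand, the identification of the top, the indecomposability, and the verification of the relations via the Kupisch inequality are all routine.
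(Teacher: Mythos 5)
Your proposal is correct, and its skeleton matches the paper's proof: both unwind the push-down formula at the points $p_i$ to exhibit $M$ as the uniserial shift representation of $\mathbf{C}_n$ whose basis is indexed by the grid points of $(\frac{a}{n},\frac{b}{n}]$, read off $\Top M\cong S_a$ and $l(M)=b-a$ from that description, and use compatibility with $\kappa_A$ together with monotonicity of $K_A$ and the identity $K_A(\frac{a}{n})=\frac{a+l_a}{n}$ to obtain $b-a\le l_a$. The one place you genuinely diverge is the verification that $M$ carries an $A$-module structure. The paper reduces this to the single inequality $l(M)\le l_a$, implicitly invoking the standard fact that a uniserial representation of $\mathbf{C}_n$ with top $S_a$ and length at most $l_a$ is isomorphic to a quotient of $P_a$ and is therefore annihilated by the relations of $A$. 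You instead check the relation at every vertex, which requires the full family of inequalities $l_{(a+r)\bmod n}\ge \ell-r$ for $0\le r<\ell$, and you derive the cases $r\ge 1$ by induction from the Kupisch inequality $l_{i+1}\ge l_i-1$. Both finishes are valid. The paper's is shorter but leans on the structure theory of Nakayama algebras (indecomposables are quotients of indecomposable projectives); yours is self-contained at the level of quiver representations, at the price of importing two standard facts the paper never states: that the relation ideal of a basic Nakayama algebra presented on $\mathbf{C}_n$ is generated by the paths of length $l_j$ starting at each vertex $j$, and that Kupisch series satisfy $l_{i+1}\ge l_i-1$ (indices mod $n$; for the linear case, where $A$-modules are viewed as $\mathbf{C}_n$-representations killed by the arrow $n-1\to 0$, this holds trivially at $i=n-1$ since $l_{n-1}=1$). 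Since both facts are classical, your argument is complete, and in fact your induction is essentially a proof of the reduction step that the paper leaves implicit.
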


\begin{proof}
According to the definition of string module $\overline M_{ (\frac{a}{n},\frac{b}{n}] }$ as in Section \ref{sec:bS persistence}, the representation $M=(X_i,\varphi_i)$ can be explicitly written in the following way.

Let $U= (\frac{a}{n},\frac{b}{n}]$, $x=p_{i+1}$, and $y=p_{i+2}$.

The vector space $X_k$ has a basis $p_U^{-1}(x)=\{b_{i,x}\}_{i=1}^{|p_U^{-1}(x)|}$ such that $b_{i,x}< b_{i+1,x}$ for all $i$.

The linear transformation $\varphi_k=\overline{M}_U(g_{xy}):X_k\to X_{k+1}$ is defined on the basis of $X_k$ as:
\begin{displaymath}
    \varphi_k(b_{i,x}) = \begin{cases}
        b_{j,y} & \exists b_{j,y}\in p_U^{-1}(y) \text{ such that } 0\leq b_{j,y}-b_{i,x} < 1 \\ 
        0 & \text{otherwise}.
    \end{cases}
\end{displaymath}

Thus, $M=(X_i,\varphi_i)$ is a string module on ${\bf C}_n$ with $\Top M\cong S_a$ and $l(M)=b-a$. 
To see $M$ is an $A$-module, we just need to check $l(M)\leq l_a$ for the Kupisch series $(l_0,l_1,\cdots, l_a,\cdots, l_{n-1})$. But this follows immediately since $b-a\leq nK_A(a)-a\leq l_a$.
\end{proof}

\begin{rmk}\label{rmk:GM} We mention some important $A$-modules corresponding to the assignment $G$.
\begin{enumerate}
 \item Each $G\overline M_{(\frac{i}{n},\frac{i+1}{n}]}$ is isomorphic to the simple module $S_i$.
 \item Each $G\overline M_{(\frac{i}{n},K_A(\frac{i}{n})]}$ is isomorphic to the projective module $P_i$. 
   \end{enumerate}
\end{rmk}

Furthermore, the assignment $G$ defines an additive functor: a morphism $f:\overline M\to \overline N$ in $\cM$ gives rise to an $A$-module homomorphism $Gf:G\overline M=(X_i,\varphi_i)\to G\overline N=(Y_i,\psi_i)$ by taking $Gf_i:X_i\to Y_i$ to be $f(p_{i})$. 

\begin{lem}\label{lem:G eqv}
The additive functor $G:\cM\to A\modd$ is an equivalence.
\end{lem}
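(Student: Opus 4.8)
The plan is to prove that the additive functor $G$ is dense, faithful, and full. Since $\cM=\add\{\overline M_{(\frac an,\frac bn]}\}$ and $A\modd$ are both additive and Krull--Remak--Schmidt, it suffices to establish density on indecomposables and full faithfulness on the generating string modules, after which all three properties extend additively. For density I would use that every indecomposable $A$-module is uniserial, hence determined up to isomorphism by its top $S_a$ (with $0\le a<n$) and its length $\ell$, where $1\le\ell\le l_a$. Given such data, put $b=a+\ell$ and $U=(\frac an,\frac bn]$; then $U\subseteq[\frac an,K_A(\frac an)]$ with $K_A(\frac an)=\frac{a+l_a}{n}$ precisely because $\ell\le l_a$, so $\overline M_U\in\cM$, and Lemma~\ref{lem:GM} identifies $G\overline M_U$ as the indecomposable $A$-module of top $S_a$ and length $\ell$. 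Thus $G$ reaches every indecomposable, and hence every object, up to isomorphism.

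For faithfulness, the key observation is that the objects of $\cM$ are locally constant away from the division points. Because $U=(\frac an,\frac bn]$ has both endpoints at division points, the value $\overline M_U(x)$ is constant and the structure maps $\overline M_U(g_{xy})$ are isomorphisms whenever $x\le y$ lie in a common half-open block $(\frac jn,\frac{j+1}n]$. The points $p_0,\dots,p_{n-1}$ are exactly the images in $\bS^1$ of $\frac1n,\dots,\frac nn$, one lying at the closed end of each block. Consequently naturality forces any morphism $f:\overline M_U\to\overline N_V$ to be completely determined by the tuple $(f(p_i))_i=Gf$; in particular $Gf=0$ gives $f(x)=0$ for all $x$, so $f=0$.

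For fullness I would run the same reconstruction in reverse. A homomorphism $g:G\overline M_U\to G\overline N_V$ is a family of linear maps $g_i:X_i\to Y_i$ commuting with all arrows $\varphi_i,\psi_i$ of $\mathbf{C}_n$, where $X_i=\overline M_U(p_i)$ and $\varphi_i=\overline M_U(g_{p_ip_{i+1}})$. Setting $f(p_i)=g_i$ and propagating across each block by the invertible structure maps produces a candidate $f$; the relations $g_{i+1}\varphi_i=\psi_i g_i$ make $f$ natural across block boundaries, and since they hold for \emph{all} arrows of the cyclic quiver, including the wrap-around arrow $X_{n-1}\to X_0$, the construction closes up consistently around $\bS^1$ and yields $Gf=g$. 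Alternatively one may bypass the explicit reconstruction by a dimension count: Corollary~\ref{cor:S1 hom space} gives $\dim_\bk\Hom_{\bS^1}(\overline M_U,\overline N_V)=|\{i:U\cap_L(V+i)\neq\emptyset\}|$, this matches the standard count of common quotient-submodule overlaps of the uniserial modules $G\overline M_U$ and $G\overline N_V$, and together with faithfulness the equality of dimensions forces $G$ to be an isomorphism on Hom-spaces.

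The step I expect to be most delicate is the global consistency in the fullness argument. Because $\overline M_U(x)=\bigoplus_{s\in p_U^{-1}(x)}\bk$ may be higher-dimensional when $U$ wraps several times around the circle, the propagated maps must be checked to respect the wrap-around morphisms $\omega_x$, which shift among the sheets; confirming that the reconstruction is well defined at this multiplicity level, so that no monodromy obstruction survives, is where the real work lies. In the dimension-count route the corresponding subtlety is showing that the condition $U\cap_L(V+i)\neq\emptyset$ matches the Nakayama overlap combinatorics exactly, since $\cap_L$ is precisely what records which quotient of $G\overline M_U$ coincides with a submodule of $G\overline N_V$.
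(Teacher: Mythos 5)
Your proposal is correct and follows essentially the same route as the paper's proof: density via Lemma~\ref{lem:GM} together with the fact that indecomposables over a Nakayama algebra are determined by top and length, and faithfulness/fullness by exploiting that the structure maps of modules in $\cM$ are isomorphisms within each block $(\frac{i}{n},\frac{i+1}{n}]$, so that a morphism is determined by its values at the points $p_i$. The wrap-around consistency you flag as the delicate point is in fact automatic: naturality for a crossing morphism $g_{p_iy}$ follows from the relation $g_{i+1}\varphi_i=\psi_i g_i$ combined with invertibility of the within-block maps, and naturality for $\omega_x=g_{yx}\circ g_{xy}$ then needs no separate check, which is precisely what the paper's (equally terse) construction of $f(x)=g_i$ on each block relies on.
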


\begin{proof}
We need to show that $G$ is fully-faithful and dense.

Let $f: \overline M\to \overline N$ be a morphism in $\cM$ such that $Gf=0$. By definition, $f(p_i)=0$ for all $i$. For any $x,y\in (\frac{i}{n},\frac{i+1}{n}]$ and for all $i$, $\overline M(g_{xy})$ and $\overline N(g_{xy})$ are identity maps. Since $f(p_i)\overline{M}(g_{xp_i})=\overline{N}(g_{xp_i})f(x)$, it follows that $f(x)=f(p_i)=0$ for for all $x\in (\frac{i}{n},\frac{i+1}{n}]$ and for all $i$. Hence, $f=0$. 

Conversely, for any homomorphism $g=(g_i):G\overline M=(X_i,\varphi_i)\to G\overline N=(Y_i,\psi_i)$, we define a natural transformation $f:\overline M\to \overline N$ such that $f(x)=g_i$ for $x\in(\frac{i}{n},\frac{i+1}{n}]$.
It satisfies $Gf=g$ and, therefore, $G:\Hom_\cM(\overline M,\overline N)\to\Hom_A(G\overline M,G\overline N)$ is bijective.

Finally, since $A$ is a basic connected Nakayama algebra, any indecomposable $A$-module can be uniquely determined by its top and length up to isomorphism. Assume $X$ is an indecomposable $A$-module with $\Top X\cong S_a$ and $l(X)=l$. Then, according to Lemma \ref{lem:GM}, $G(\overline M_{(\frac{a}{n},\frac{a+l}{n}]})\cong X$. Therefore, $G$ is dense.
\end{proof}

\begin{cor}
Let $A$ be a basic connected Nakayama algebra with Kupisch series $(l_0,l_1,\cdots, l_{n-1})$, $\kappa_A$ be its associated Kupisch function, and $\overline P=\bigoplus\limits_{i=0}^{n-1} \overline M_{(\frac{i}{n}, K_A(\frac{i}{n})]}$ be in $(\bS^1,\kappa_A)\pwf$. Then $A\cong\End_{\bS^1}(\overline P)$. 
\end{cor}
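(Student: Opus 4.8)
The plan is to read off the corollary directly from the equivalence $G:\cM\to A\modd$ of Lemma~\ref{lem:G eqv}, so that the only real content is to identify the image of $\overline P$ under $G$ and then transport endomorphism rings across the equivalence. Concretely, I would establish three facts in turn: that $\overline P$ lies in $\cM$ (so that $\End_{\bS^1}(\overline P)$ is computed inside $\cM$), that $G\overline P$ is the regular representation of $A$, and that $G$ carries the endomorphism ring of $\overline P$ isomorphically onto that of $G\overline P$.

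First I would check that $\overline P\in\cM$. Evaluating Definition~\ref{associate kupish} at $t=\frac{i}{n}$ gives $\kappa_A(\frac{i}{n})=\frac{l_i}{n}$, hence $K_A(\frac{i}{n})=\frac{i}{n}+\frac{l_i}{n}=\frac{i+l_i}{n}$, so each summand has the form $\overline M_{(\frac{i}{n},\frac{i+l_i}{n}]}$ with $i<n$ and $i+l_i\in\mathbb N$. This is exactly the shape of the generators of $\cM$, so $\overline P\in\cM$. Since $\cM$ is a \emph{full} subcategory of $(\bS^1,\kappa_A)\pwf$, morphisms of $\overline P$ are the same computed in either category, i.e.\ $\End_{\bS^1}(\overline P)=\End_{\cM}(\overline P)$. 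Next I would compute the image: by Remark~\ref{rmk:GM}(2) each $G\overline M_{(\frac{i}{n},K_A(\frac{i}{n})]}\cong P_i$, and additivity of $G$ gives $G\overline P\cong\bigoplus_{i=0}^{n-1}P_i$. Because $A$ is basic, the $P_i$ form a complete list of pairwise non-isomorphic indecomposable projectives and $\bigoplus_{i=0}^{n-1}P_i$ is the regular representation.

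Finally, since $G$ is an equivalence (Lemma~\ref{lem:G eqv}) it is fully faithful, so it restricts to a ring isomorphism $\End_{\cM}(\overline P)\xrightarrow{\ \sim\ }\End_{A\modd}(G\overline P)$; composing with the identification $\End_{A\modd}(G\overline P)\cong\End_A(A)\cong A$ for the regular representation yields $A\cong\End_{\bS^1}(\overline P)$, as claimed. I do not anticipate a genuine obstacle: given the equivalence of Lemma~\ref{lem:G eqv} the statement is essentially immediate, and the only point requiring care is bookkeeping of the composition order, namely that the covariant fully faithful $G$ induces a true ring isomorphism (not an anti-isomorphism) so that the endomorphism ring of the regular representation is identified with $A$ under the paper's module conventions.
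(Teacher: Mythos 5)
Your proposal is correct and follows the same route as the paper's own (one-line) proof: identify each summand $\overline M_{(\frac{i}{n},K_A(\frac{i}{n})]}$ with $P_i$ via Remark~\ref{rmk:GM}(2), use that $A$ is basic so $A\cong\End_A\bigl(\bigoplus_{i=0}^{n-1}P_i\bigr)$, and transport the endomorphism ring across the equivalence $G$ of Lemma~\ref{lem:G eqv}. Your version merely spells out the bookkeeping (membership of $\overline P$ in $\cM$, fullness of $\cM$, ring versus anti-ring isomorphism) that the paper leaves implicit.
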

\begin{proof}
 Since $G\overline M_{(\frac{i}{n}, K_A(\frac{i}{n})]}=P_i$ are the indecomposable projective $A$-modules, it follows that $\displaystyle A\cong \End_A\left(\bigoplus\limits_{i=0}^{n-1} P_i\right)\cong \End_{\bS^1}\left(\bigoplus\limits_{i=0}^{n-1} \overline M_{(\frac{i}{n}, K_A(\frac{i}{n})]}\right)$.
\end{proof}

{\bf Proof of Theorem \ref{embedding thm}:} The embedding $\iota: \cM\to (\bS^1,\kappa_A)\pwf$ is exact by construction.  Therefore, there is an exact embedding $F=\iota\circ G^{-1}:A\modd\to (\bS^1,\kappa_A)\pwf$.
Each indecomposable projective $A$-module $P(a)$ has $\Top P(a)\cong S_a$ and $l(P(a))=l_a$. Thus, $F(P(a))\cong \overline M_{(\frac{a}{n},\frac{a+l_a}{n}]}= \overline M_{(\frac{a}{n},K(\frac{a}{n})]}$, which is a projective object in $(\bS^1,\kappa_A)\pwf$. 

When $A$ is a linear Nakayama algebra, it follows that $l_{n-1}=1$. Hence $K_A(t)\leq 1$ for all $0\leq t<1$. So for each string module $\overline M_U$ in $\cM$, we have $U\subseteq (0,1]$ and $\displaystyle\Hom_{\cM}(\overline M_U,\overline M_V)=\bigoplus_{i\in\ZZ}\Hom_{\RR}(M_U,M_{V+i})=\Hom_{\RR}(M_U,M_{V})$. Therefore $\iota:\cM\to (\RR,\kappa_A)$ sending $\bigoplus\overline M_{U}$ to $\bigoplus M_U$ is a fully-faithful embedding and $L=\iota\circ G^{-1}:A\modd\to (\RR,\kappa_A)\pwf$ is the desired exact embedding.
\hfill $\qed$

Theorem \ref{embedding thm} says that the module category of any basic connected Nakayama algebra can be regarded as an exact abelian subcategory of $(\bS^1,\kappa)\pwf$ or $(\RR,\kappa)\pwf$ for some Kupisch function $\kappa$.
Therefore, we call $(\bS^1,\kappa)\pwf$ or $(\RR,\kappa)\pwf$ a category of continuous Nakayama representations. 

Continuous Nakayama representations have composition series in the sense of \cite{HR2}. In particular, it is shown in \cite{HR2} that indecomposable representations of $\RR$ with straight orientation (and thus string representations of $\bS^1$ with cyclic orientation) are uniserial.
Thus, an indecomposable contiuous Nakayama representation of $(\RR,\kappa)\pwf$ or of $(\bS^1,\kappa)\pwf$ is also uniserial, for some (pre-)Kupisch function $\kappa$.
Although categories of both discrete and continuous Nakayama representations have uniserial indecomposable objects, the homological properties of continuous Nakayama representations are sometimes quite different. We conclude with some interesting examples, which demonstrates the difference of some homological properties between categories of discrete and continuous Nakayama representations.

\begin{exm}
Consider $\kappa(t)=\begin{cases}0.1 & n\leq t\leq n+\frac{1}{2},\, n \in\ZZ \\
1-(t-n) &n+\frac{1}{2}<t<n+1, n\in\ZZ\end{cases}$

The simple representation $\overline M_{[0,0]}=\overline M_{[1,1]}$ does not have an injective envelope.
This is because, if there was an injective envelope $\overline M_{[1,1]}\to \overline E$, then $\overline E$ must be indecomposable and of the form $\overline E=\overline M_{[b,1]}$.  But since any inclusion $\overline{M}_{[1,1]}\hookrightarrow\overline{M}_{[\frac{1}{2}+\e,1]}$ ($0<\e<\frac{1}{2}$) cannot factor through $\overline{M}_{[1,1]}\hookrightarrow\overline{M}_{[\frac{1}{2}+\frac{\e}{2},1]}$, there is no such an injective envelope.
\end{exm}

\begin{exm}\label{findim}
 Consider the category $(\bS^1,\kappa)\pwf$, where for $t\in(0,1]$
$$
K(t)=\kappa(t)+t=\begin{cases}  \frac{1}{n-1}+1 &\frac{1}{n+1}\leq t <\frac{1}{n}, \text{\ for\ }  n\geq 4\\
 \frac{3}{2} &\frac{1}{4}\leq t\leq 1.
 \end{cases}
$$
Every string module $\overline M_U$ has finite projective dimension. But $\pd \overline M_{(\frac{1}{2k+1},\frac{1}{2k}]}=2k-1$ for $k\in\mathbb N^{>0}$. Hence $\displaystyle\pd \left(\bigoplus_{k=1}^{\infty} M_{(\frac{1}{2k+1},\frac{1}{2k}]}\right)=\infty$ and $\sup\{\pd \overline M_U\mid \pd \overline M_U<\infty\}=\infty$.
\end{exm}

\end{document}